\documentclass[12pt, reqno]{amsart}
\usepackage{amsmath, amsthm, amscd, amsfonts, amssymb, graphicx, xcolor}
\usepackage[bookmarksnumbered, colorlinks, plainpages]{hyperref}
\usepackage{tikz}
\usetikzlibrary{arrows.meta, positioning}
\usepackage{tikz-cd}
\usepackage{enumitem}
\usepackage{xcolor}
\definecolor{RED}{named}{red}

\newtheorem{theorem}{Theorem}[section]
\newtheorem{lemma}[theorem]{Lemma}
\newtheorem{proposition}[theorem]{Proposition}
\newtheorem{corollary}[theorem]{Corollary}
\theoremstyle{definition}
\newtheorem{definition}[theorem]{Definition}
\newtheorem{example}[theorem]{Example}

\theoremstyle{remark}
\newtheorem{remark}[theorem]{Remark}
\numberwithin{equation}{section}

\theoremstyle{theorem}
\newtheorem*{theorem*}{Theorem}
\newtheorem*{proposition*}{Proposition}
\newtheorem*{corollary*}{Corollary}
\newtheorem*{lemma*}{Lemma}
\newtheorem*{example*}{Example}
\theoremstyle{remark}
\newtheorem*{remark*}{Remark}

\newcommand{\grouphull}[1]{\left\langle #1 \right\rangle}

\def\subfin{{\operatorname{Sub_{fin}}}}

\def\Eig{{\rm Eig}}
\newcommand{\envelopingOdometer}[1]{\mathcal{O}(#1)}

\def\finite{{\mathcal{F}}} 
\def\hyper{{\mathcal{H}}}

\usepackage{stmaryrd}
\newcommand{\eigenhull}[1]{\left\llbracket #1 \right\rrbracket}

\begin{document}
\color{black}

\title
[Induced dynamics and quasifactors for subodometers]
{Induced dynamics and quasifactors for minimal equicontinuous actions on Stone spaces}

\author[]{Mar\'{\i}a Isabel Cortez}
\address{Facultad de Matem\'aticas, Pontificia Universidad Cat\'olica de Chile. Edificio Rolando Chuaqui, Campus San Joaquín. Avda. Vicuña Mackenna 4860, Macul, Chile.}
\email{maria.cortez@uc.cl}

\author[]{Till Hauser}
\address{Facultad de Matem\'aticas, Pontificia Universidad Cat\'olica de Chile. Edificio Rolando Chuaqui, Campus San Joaquín. Avda. Vicuña Mackenna 4860, Macul, Chile.}
\email{hauser.math@mail.de}

\thanks{This article was funded by the Deutsche Forschungsgemeinschaft (DFG, German Research Foundation) – 530703788.}

\begin{abstract}
    A minimal equicontinuous action of a group $G$ on a Stone space $X$ is called a \emph{subodometer}. 
    If such a subodometer arises from a group rotation, we refer to it as an \emph{odometer}.
    For subodometers $(X,G)$ we show that the hyperspace $\hyper(X)$ - given by all closed subsets of $X$ and the Vietoris topology - decomposes into subodometers.     
    We show that an infinite subodometer is an odometer if and only if $\hyper(X)$ decomposes into factors of $(X,G)$. 
    
    Similarly, we consider $\mathcal{M}(X)$, the space of regular Borel probability measures equipped with the weak-* topology. 
    We show that for a subodometer $(X,G)$ also the connected space $\mathcal{M}(X)$ decomposes into subodometers.
    We prove that an infinite subodometer $(X,G)$ is an odometer if and only if $\mathcal{M}(X)$ decomposes into factors of $(X,G)$. 
    For this, we study different notions of regular recurrence. 
    
    Furthermore, we study the disjointness of minimal actions to subodometers and show that this disjointness can be detected from the pairwise disjointness of finite factors.     
    Using this we prove that a minimal action is disjoint from all subodometers if and only if it has a connected maximal equicontinuous factor. 
\newline
\newline
\noindent \textit{Keywords.} 
Action,
Subodometer, 
Odometer, 
Quasifactor,
Disjointness, 
Hyperspace, 
Induced dynamics, 
Borel probability measure, 
Rotation, 
Stone space,
Eigenvalue. 
\newline
\noindent \textit{2020 Mathematics Subject Classification.} 
Primary {37B05}; Secondary {37B20}, {54H15}, {20E18}. 
\end{abstract} \maketitle





\section{Introduction}
The notion of a quasifactor was introduced by Glasner in \cite{glasner1975compressibility} as a tool to study disjointness. In that work, he showed that the quasifactors of a minimal distal system $(X,G)$ are factors of its Ellis semigroup, and that a minimal system $(Y,G)$ is disjoint from $(X,G)$ whenever $(Y,G)$ has no common factors with the Ellis semigroup of $(X,G)$. Subsequent works have further developed these ideas by establishing connections between the dynamical properties of a system and those of the induced flows on its hyperspace and on its space of probability measures. For instance, several results describe the nature of quasifactors in relation to the dynamical properties of the underlying system \cite{auslander2000ellis,glasner2000quasifactors,nagar2022characterization,vanderWoude1979disjointness}, as well as connections with invariants such as entropy \cite{glasner1995quasifactors, kwietniak2007topological}, where we only mention some of the results. 
In this work, we focus on minimal equicontinuous actions on Stone spaces.
We distinguish those arising from group rotations from those that do not through the nature of their quasifactors and the minimal components of the action induced on their space of probability measures, leading to several results on disjointness. This work relies on the characterization in terms of eigensets and generating scales provided in \cite{cortez2026minimal}. We use the following terminology.

\begin{definition}
    A minimal equicontinuous action on a Stone space is called \emph{subodometer}. 
    A subodometer is called \emph{odometer} if it is a rotation. 
\end{definition}

Let $(X,G)$ be a subodometer. 
Since $X$ is equicontinuous, $(\hyper(X),G)$ is also equicontinuous and hence decomposes into minimal components. 
Furthermore, since $X$ is a Stone space, $\hyper(X)$ is also a Stone space \cite[Proposition 8.6]{illanes1999hyperspaces}. Thus, the minimal components of $\hyper(X)$ are minimal equicontinuous actions on Stone spaces and hence subodometers.
Note that $\mathcal{M}(X)$ is convex and hence connected. 
Nevertheless, in Section \ref{sec:RegularRecurrence} we explore different concepts of regular recurrence and prove the following. 

\begin{theorem}
\label{the:INTROInducedDynamicsOfSubodometerDecompose}
    For any subodometer $(X,G)$ the actions $(\hyper(X),G)$ and $(\mathcal{M}(X),G)$ decompose into a disjoint union of subodometers. 
\end{theorem}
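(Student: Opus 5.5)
The plan is to treat the two induced actions separately. For $\hyper(X)$ the argument is the one sketched in the introduction; for $\mathcal{M}(X)$ the work is to show that each orbit closure is totally disconnected even though $\mathcal{M}(X)$ is connected.

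For $(\hyper(X),G)$, I first check that equicontinuity of $(X,G)$ passes to the hyperspace. Fix a compatible uniformity on $X$; the Hausdorff uniformity induces the Vietoris topology on $\hyper(X)$. If every $g\in G$ maps $\varepsilon$-close points of $X$ to $\delta$-close points, then every $g$ maps Hausdorff-$\varepsilon$-close closed sets to Hausdorff-$\delta$-close closed sets. This gives equicontinuity of $(\hyper(X),G)$. A compact equicontinuous action is distal, so every point is almost periodic. Hence every orbit closure is minimal, and distinct orbit closures are disjoint, so $\hyper(X)$ is a disjoint union of minimal subsystems. Each such orbit closure is closed in the Stone space $\hyper(X)$ (\cite[Proposition 8.6]{illanes1999hyperspaces}), so it is again a Stone space. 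The restricted action is minimal and equicontinuous, hence a subodometer.

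For $(\mathcal{M}(X),G)$ the same argument shows that the weak-* topology makes the action equicontinuous. Take the uniformity generated by finitely many continuous functions $f$; note that $\{f\circ g: g\in G\}$ is uniformly equicontinuous, so its orbit is relatively norm-compact in $C(X)$. Again we get a decomposition into minimal equicontinuous orbit closures $\overline{G\mu}$. What remains is that each $\overline{G\mu}$ is totally disconnected; as a closed subset of a compact metrizable space (or in general compact Hausdorff) it is then a Stone space. To see this I use that $X$ is a Stone space with equicontinuous action. Then $X$ has a basis of clopen sets $U$ whose orbit $\{gU\}$ is finite. Concretely, take a clopen partition of $X$ and intersect the finitely many partitions it induces under a finite-index stabilizer, as in the characterization by eigensets/generating scales of \cite{cortez2026minimal}.

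For such a $U$, the map $\nu\mapsto(\nu(gU))_{g\in G}$ is continuous on $\mathcal{M}(X)$ because indicators of clopen sets are continuous. Since only finitely many distinct sets $gU$ occur, the map takes values in a finite-dimensional space. On the orbit $G\mu$ the values of $\nu(U)$ lie in the finite set $\{\mu(gU)\}$. By continuity, the function $\nu\mapsto\nu(U)$ takes only finitely many values on $\overline{G\mu}$, so its level sets there are clopen. These functions, with $U$ ranging over the invariant-finite clopen basis, separate points of $\mathcal{M}(X)$, since a measure is determined by its values on a clopen basis closed under finite intersections. Consequently $\overline{G\mu}$ embeds continuously and injectively into a product of finite discrete spaces, and is therefore totally disconnected.

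The main obstacle is making this last step precise. One needs the existence of a basis of clopen sets with finite $G$-orbits that is stable under finite intersections, and this must be derived from equicontinuity plus zero-dimensionality (or quoted from \cite{cortez2026minimal}). This is where I expect the "regular recurrence" notions of Section \ref{sec:RegularRecurrence} to enter: each clopen $U$ should have a finite-index stabilizer, i.e.\ $U$ is regularly recurrent.
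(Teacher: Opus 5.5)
Your proof is correct. For $\hyper(X)$ it is the argument the paper sketches in the introduction, but for $\mathcal{M}(X)$ you take a different route.

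The paper never looks at clopen subsets of $X$ in the $\mathcal{M}(X)$ case. It proceeds in three steps:
\begin{itemize}
\item a subodometer is uniformly regularly recurrent (Corollary~\ref{cor:subodometersUniformlyRegularlyRecurrent});
\item this property passes to $(\mathcal{M}(X),G)$ via the estimate $\|f-g^*f\|_\infty\leq\epsilon$ for all $g$ in a finite index subgroup (Lemma~\ref{lem:regularRecurrenceInCX} and Theorem~\ref{the:HMinheritanceUniformRegularRecurrence});
\item in a minimal action every regularly recurrent point has a neighbourhood base of $G$-tiles $\overline{\Gamma.\mu}$ with $\Gamma$ normal of finite index (Proposition~\ref{pro:regularlyRecurrentPointsAndGTiles}).
\end{itemize}
So total disconnectedness of each minimal component comes from the dynamics, not from $X$.

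You instead embed $\overline{G.\mu}$ into a product of finite discrete sets via $\nu\mapsto\nu(U)$, with $U$ ranging over clopen sets with finite orbit. This is more elementary and explicit, but it needs $X$ to be zero-dimensional. The paper's version (Corollary~\ref{cor:InducedDynamicsDecompose}) covers every uniformly regularly recurrent action, for example rational circle rotations, where there are no nontrivial clopen sets. Its machinery is also reused later for odometers (Lemma~\ref{lem:odometerUniformRegularRecurrenceEigenvalueChoice}).

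The step you flag as the main obstacle is routine. Under an equicontinuous action on a Stone space every clopen $U$ has finite orbit. Indeed, $R:=\bigcap_{g\in G}\bigl((g.U)^2\cup(X\setminus g.U)^2\bigr)$ contains an invariant entourage, so it is an invariant equivalence relation with finitely many clopen classes. Each $g.U$ is a union of $R$-classes, so there are only finitely many of them; compare the step (v)$\Rightarrow$(vi) in Theorem~\ref{the:characterizationDisjointnessToSubodometers}. Stability under finite intersections is automatic, since $g.(U\cap V)=g.U\cap g.V$.

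For point separation, argue either via Stone--Weierstrass (the span of the $\chi_U$ and the constants is a point-separating subalgebra of $C(X)$) or via regularity of the measures. A bare $\pi$-$\lambda$ argument is not enough, because for non-metrizable $X$ such a base need not generate the Borel $\sigma$-algebra.
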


A minimal subaction of $(\hyper(X),G)$ is called a ($\hyper$-)\emph{quasifactor} \cite{glasner1975compressibility,auslander1988minimal} and it is natural to ask which subodometers appear as quasifactors of a subodometer. 
It is well known\footnote{
    See, for instance, the discussion in the introduction of \cite{glasner1995quasifactors} and \cite[Theorem~7.3]{auslander1988minimal}.} 
that for a minimal distal action any factor appears (up to conjugacy) as a quasifactor. 
Concerning the converse, a first insight is provided by \cite[Corollary 11.20]{auslander1988minimal}, which yields that any quasifactor of an odometer is a factor. 
In Theorem \ref{the:hyperspaceCharacterizationOdometerViaQuasifactors} we show the following converse. 

\begin{theorem*}
    An infinite subodometer $(X,G)$ is an odometer if and only if $(\hyper(X),G)$ decomposes into factors of $(X,G)$. 
\end{theorem*}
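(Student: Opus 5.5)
One direction is already available: if $(X,G)$ is an odometer, then by \cite[Corollary 11.20]{auslander1988minimal} every quasifactor is a factor of $(X,G)$. By Theorem \ref{the:INTROInducedDynamicsOfSubodometerDecompose}, $(\hyper(X),G)$ is a disjoint union of minimal components. Each component is therefore a factor, and so $\hyper(X)$ decomposes into factors of $(X,G)$.

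For the converse I would argue by contraposition, using the description of subodometers from \cite{cortez2026minimal}. Realize $(X,G)$ as an inverse limit $X=\varprojlim G/\Gamma_n$ of coset spaces, where $\Gamma_n$ is a decreasing sequence of finite-index subgroups (a generating scale). Fix a base point $x_0=(\Gamma_n)_n$; its stabilizer is $\bigcap_n\Gamma_n$. The action is an odometer exactly when it is a rotation. Equivalently, all point stabilizers coincide, i.e.\ the stabilizer of $x_0$ is normal, so the action is free modulo a common normal subgroup.

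Suppose $(X,G)$ is not an odometer. I would then exhibit a closed set $F\subseteq X$ whose orbit closure $\overline{GF}\subseteq\hyper(X)$ is not a factor of $(X,G)$. The natural candidate is an orbit closure of a point-like object that sees the non-normality. For example, take $F=\overline{N x_0}$ for a suitable subgroup $N$. Alternatively, take $F=\pi^{-1}(y)$ for a finite factor map $\pi\colon X\to G/\Gamma_n$ and compare it with the coarser finite factor given by the normal core of $\Gamma_n$.

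The key invariant to compare is the set of fixed points or stabilizers. If $\overline{GF}$ were a factor of $X$ via $\phi\colon X\to\overline{GF}$, then the stabilizer of $\phi(x)$ would contain the stabilizer of $x$ for every $x$. Hence some element of $\overline{GF}$ would have stabilizer containing $\bigcap_n\Gamma_n$ (up to conjugation). I would then compute the stabilizers of elements of $\overline{GF}$ and show that none contains any conjugate of $\mathrm{Stab}(x_0)$. The set should be chosen so that its stabilizer is a normal-core-type group, of the form $\bigcap_n \mathrm{core}(\Gamma_n)$-ish. Infiniteness of $X$ is used to guarantee that non-normality persists along infinitely many levels, so that the discrepancy survives in the limit rather than being absorbed at a finite stage.

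The main obstacle is the choice of $F$ together with the stabilizer computation. Stabilizers of factors contain stabilizers of $X$ only pointwise, and in subodometers stabilizers can vary wildly; they are not continuous in $x$. I would first try working with the eigenset and scale language of \cite{cortez2026minimal}, using factors of $X$ classified by scales coarser than $(\Gamma_n)$. The goal is to show that a factor of $X$ must itself be a subodometer whose scale consists of subgroups containing the $\Gamma_n$ up to conjugacy. In contrast, a well-chosen quasifactor, such as the closure of $\{g\pi^{-1}(\pi(x_0))\}$ taken over suitably growing levels, has a scale built from normal cores that is not comparable with $(\Gamma_n)$.
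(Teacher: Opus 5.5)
Your forward direction is fine and is the paper's argument. The converse, however, has a genuine gap. It is organized around point stabilizers, and these detect neither non-odometers nor non-factors.

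Your claim that $(X,G)$ is an odometer exactly when the stabilizer of $x_0$ is normal is false for infinite subodometers. Take $G=\mathbb{Z}\times S_3$ and $\Gamma_n:=\{(3^nk,(12)^k);\,k\in\mathbb{Z}\}$.
\begin{itemize}
\item These subgroups are nested, since $(12)^{3k}=(12)^k$, and $\Gamma_n$ has index $6\cdot 3^n$.
\item Every stabilizer $\bigcap_n\Gamma_n^{g_n}$ of a point $(g_n\Gamma_n)_n$ is trivial, because its first coordinates lie in $\bigcap_n 3^n\mathbb{Z}=\{0\}$. So the action is free.
\item Yet $\Gamma_n\cap\Gamma_n^{(0,(13))}=2\cdot 3^n\mathbb{Z}\times\{e\}$ contains no conjugate of any $\Gamma_m$, since each such conjugate contains $(3^m,\tau(12)\tau^{-1})$, whose first coordinate is odd.
\end{itemize}
Hence $\Eig(X,G)$ is not a scale, and this free action is not an odometer. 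For such an $X$, the necessary condition you hope to violate (some element of $\overline{G.F}$ has stabilizer containing a conjugate of $\bigcap_n\Gamma_n$) holds for every quasifactor, so no contradiction can come from it. A smaller point: for uncountable $G$ a sequence $(\Gamma_n)$ need not suffice, and you need a scale.

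Your concrete candidates also produce factors.
\begin{itemize}
\item The orbit closure of a fibre $\pi^{-1}(y)$ of a factor map is conjugate to that factor (Proposition~\ref{pro:hyperspacefactorsAsQuasifactors}).
\item For a normal finite index $N$, such as a normal core, the set $B=\overline{N.x_0}$ satisfies $G_0(B)\in\Eig(X,G)$ (Lemma~\ref{lem:preparationForDisjointnessCharacterization}). Its $G$-orbit is therefore the factor $G/G_0(B)$.
\end{itemize}

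What is missing is the right invariant together with a set that defeats it. A finite quasifactor $\overline{G.A}$ is conjugate to $G/G_0(A)$, so it is a factor if and only if $G_0(A)\in\Eig(X,G)$. The paper's construction runs as follows.
\begin{itemize}
\item A non-odometer has a \emph{settled} non-normal $\Gamma\in\Eig(X,G)$: whenever $\Gamma\cap\Gamma^g\in\Eig(X,G)$, one has $\Gamma^g=\Gamma$. To see this, take $\Gamma_I=\bigcap_{h\in I}\Gamma^h$ with $I$ maximal in a fundamental domain subject to $\Gamma_I\in\Eig(X,G)$. If all such $\Gamma_I$ were normal, $\Eig(X,G)$ would be core-stable.
\item Fix $g$ with $\Gamma^g\neq\Gamma$ and $\Lambda\in\Eig(X,G)$ with $\Lambda\subsetneq\Gamma$ (the paper takes $[\Gamma\colon\Lambda]\geq 3$).
\item Set $A=\pi_\Gamma^{-1}(\Gamma)\cup\pi_\Lambda^{-1}(g\Lambda)$. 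One computes $G_0(A)=\Gamma\cap\Lambda^g\subseteq\Gamma\cap\Gamma^g$.
\item If $G_0(A)$ were in $\Eig(X,G)$, upward closure would give $\Gamma\cap\Gamma^g\in\Eig(X,G)$, contradicting settledness.
\end{itemize}

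Infiniteness is used only to find such a $\Lambda$. Its role is to stop elements of $G_0(A)$ from swapping the two pieces of $A$, which forces $G_0(A)\subseteq\Gamma$. It has nothing to do with non-normality persisting along infinitely many levels. For $|X|=3$ no such $\Lambda$ exists, which matches Proposition~\ref{pro:verySmallPhaseSpaces}.
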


Note that a similar statement does not hold for finite subodometers, as we will discuss at the end of Section \ref{sec:hyperspacesOfSubodometers}. 
In Section \ref{sec:inducedDynamicsOnTheRegularBorelProbabilityMeasures} we then study the minimal subaction of $(\mathcal{M}(X),G)$, which we call $\mathcal{M}$-\emph{quasifactors}. 
We show the following in the Theorems \ref{the:factorsAreMQuasifactors}, \ref{the:forOdometersComponentsMXfactor} and \ref{the:MXcharacterizationOdometerViaMinimalComponents}. 

\begin{theorem*}
    Let $(X,G)$ be a subodometer. 
    \begin{itemize}
        \item[(i)] 
        Any factor of $(X,G)$ is conjugated to a $\mathcal{M}$-quasifactor of $(X,G)$.
        \item[(ii)]
        Whenever $(X,G)$ is an odometer, then the $\mathcal{M}$-quasifactors of $(X,G)$ are exactly the factors of $(X,G)$.
        \item[(iii)] 
        An infinite subodometer $(X,G)$ is an odometer if and only if $(\mathcal{M}(X),G)$ decomposes into factors of $(X,G)$. 
    \end{itemize}
\end{theorem*}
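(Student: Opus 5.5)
For (i) I would construct, for a factor map $\pi\colon(X,G)\to(Y,G)$, a continuous equivariant injection $Y\to\mathcal{M}(X)$, $y\mapsto\mu_y$, with $\mu_y$ supported on the fibre $\pi^{-1}(y)$. I would use the description of subodometers from \cite{cortez2026minimal} as inverse limits $X=\varprojlim G/H_n$ of coset spaces. Here $(H_n)$ is a decreasing sequence of finite index subgroups, and after passing to a suitable scale the factor is $Y=\varprojlim G/K_n$ with $H_n\subseteq K_n$. At each level the fibres of $G/H_n\to G/K_n$ all have the same cardinality $[K_n:H_n]$. I would therefore let $\mu_y$ be the measure giving mass $1/[K_n:H_n]$ to each cylinder $[gH_n]$ contained in $y_n=gK_n$, and mass $0$ to all other cylinders.

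The points to verify are the following.
\begin{itemize}
\item These assignments are consistent along the scale, so that Kolmogorov extension gives a Borel probability measure. This is where the scale has to be chosen compatibly, for instance with $K_{n+1}\cap H_n=H_{n+1}$ after refining.
\item $g\mu_y=\mu_{gy}$. This is immediate, since $G$ permutes cosets bijectively.
\item $y\mapsto\mu_y$ is continuous. Its values on level-$n$ cylinders depend only on $y_n$.
\item $y\mapsto\mu_y$ is injective, because distinct points have disjoint supports.
\end{itemize}
Compactness then makes the image a conjugate copy of $Y$ inside $\mathcal{M}(X)$. This copy is minimal, hence an $\mathcal{M}$-quasifactor.

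For (ii), suppose $X$ is a compact group and $G$ acts by translation through a homomorphism with dense image. Translation of measures extends to a continuous action of $X$ on $\mathcal{M}(X)$. The orbit closure of any $\mu$ is then $\overline{G\mu}=X\mu$, and it is the image of the continuous equivariant map $x\mapsto x\mu$. This map factors through the closed subgroup $\mathrm{Stab}_X(\mu)$ and gives a conjugacy $X/\mathrm{Stab}_X(\mu)\cong X\mu$. So every $\mathcal{M}$-quasifactor is a factor. Together with (i), the $\mathcal{M}$-quasifactors are exactly the factors; here one uses Haar measure on the closed subgroups $K$, which realises $X/K$.

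For (iii), the direction ``$\Rightarrow$'' is (ii), combined with Theorem \ref{the:INTROInducedDynamicsOfSubodometerDecompose}, which ensures that $\mathcal{M}(X)$ is a union of minimal components. For ``$\Leftarrow$'' I would argue by contraposition. Assume $(X,G)$ is infinite and not an odometer, and aim to produce $\mu\in\mathcal{M}(X)$ whose orbit closure is not conjugate to a factor of $X$. My first attempt is to transfer the hyperspace argument of Theorem \ref{the:hyperspaceCharacterizationOdometerViaQuasifactors}. That argument supplies a closed set $C$ whose orbit closure in $\hyper(X)$ is not a factor. I would take $\mu_C$ to be the scale-uniform measure on $C$, defined as above by equal weights on the level-$n$ cylinders meeting $C$ in the inverse-limit coordinates. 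The map $\mu\mapsto\mathrm{supp}\,\mu$ is not continuous in general. On the orbit closure of such uniformly constructed measures, however, I expect it to be an equivariant homeomorphism onto the orbit closure of $C$, because everything is determined levelwise by finite data. The orbit closure of $\mu_C$ would then be conjugate to that of $C$, and so would not be a factor.

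The main obstacle is this last step. I would need to justify continuity of the support map on the orbit closure, or alternatively to compare the two orbit closures directly through an invariant. The natural invariant is the system of stabiliser subgroups at finite levels, that is, the finite factors. Non-normality of the $H_n$ gives an $n$ for which the stabiliser of the level-$n$ data of $\mu$ contains no subgroup of the form $\bigcap_m gH_mg^{-1}$ of the required shape, and infiniteness allows this defect to persist in the limit.
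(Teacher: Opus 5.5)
Your argument for (ii) is correct and shorter than the paper's. The compact group $X$ acts continuously on $\mathcal{M}(X)$, so $x\mapsto x_*\mu$ is directly a factor map of $(X,G)$ onto $\overline{G.\mu}$. The paper instead works through intrinsic regular recurrence and eigenvalues (Lemma~\ref{lem:odometerUniformRegularRecurrenceEigenvalueChoice}). The gaps are in (iii) and (i).

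\textbf{(iii), direction ``$\Leftarrow$''.} You leave the decisive step as an expectation, and the obstacle you name is not the real one. You do not need continuity of the support map on an infinite orbit closure. The non-factor quasifactor from Theorem~\ref{the:hyperspaceCharacterizationOdometerViaQuasifactors} is \emph{finite}: by Lemma~\ref{lem:quasiFactorConstruction}, the set $C=\pi_\Gamma^{-1}(\Gamma)\cup\pi_\Lambda^{-1}(g\Lambda)$ has stabiliser $\Gamma\cap\Lambda^g\in\subfin(G)$. Hence $\overline{G.C}=G.C$ is a finite family of clopen sets. For clopen $C'$ the natural measure is $\mu_{C'}=\theta(\,\cdot\,\cap C')/\theta(C')$. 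Since the invariant measure $\theta$ of a minimal system has full support, $\operatorname{supp}\mu_{C'}=C'$. So $C'\mapsto\mu_{C'}$ is an injective equivariant map on a finite minimal system, hence a conjugacy onto the finite $\mathcal{M}$-quasifactor $G.\mu_C$, which is therefore not a factor. This is exactly Lemma~\ref{lem:HXMXtranslation}. Your closing paragraph (stabilisers of ``level-$n$ data'', non-normality ``persisting in the limit'') is not an argument. The invariant actually at work is simply $G_0(\mu_C)=G_0(C)=\Gamma\cap\Lambda^g\notin\Eig(X,G)$.

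\textbf{(i), consistency of the measure.} The measure is not well defined as you state it. Giving mass $1/[K_n:H_n]$ to every level-$n$ cylinder inside $y_n$ is consistent only if each such cylinder meets $\pi^{-1}(y)$, i.e.\ $K_mH_n=K_n$ for all $m\ge n$. For example, with $G=\mathbb{Z}$, $H_n=2^n\mathbb{Z}$, $K_1=\mathbb{Z}$ and $K_n=2\mathbb{Z}$ for $n\ge 2$, levels $1$ and $2$ disagree. Your proposed normalisation $K_{n+1}\cap H_n=H_{n+1}$ is not the right condition:
\begin{itemize}
\item it neither implies the needed one (take $K_1=\mathbb{Z}$, $K_n=2^n\mathbb{Z}$ for $n\ge2$);
\item it usually cannot be arranged (for trivial $Y$ it forces $H_{n+1}=H_n$);
\item for non-normal subgroups, $\bigcap_m K_mH_n$ need not even be a subgroup for an arbitrary pair of scales.
\end{itemize}
A working choice is to index by normal $N\in\subfin(G)$ and set $H_N:=G_0(\overline{N.x})$ and $K_N:=G_0(\overline{N.\pi(x)})$ (cf.\ Lemma~\ref{lem:preparationForDisjointnessCharacterization}). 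A short minimality argument gives $K_{N'}H_N=K_N$ for $N'\subseteq N$. After that, your consistency, equivariance, continuity and injectivity checks go through, using nets rather than sequences since $G$ need not be countable. The resulting $\mu_y$ is the disintegration of $\theta$ over $\pi$. The paper reaches essentially the same measure as a cluster point of $\theta(\,\cdot\,\cap A_1^\Gamma)/\theta(A_1^\Gamma)$, and then identifies $\overline{G.\mu}$ with $Y$ via $\Eig_\mu$ and Lemma~\ref{lem:factorMapControl}.
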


Recall that minimal actions $(X,G)$ and $(Y,G)$ are called \emph{disjoint} if the product action $(X\times Y,G)$ is minimal. We write $X\perp Y$ to indicate the disjointness of $(X,G)$ and $(Y,G)$. 
We denote $\Eig(X,G)$ for the set of all finite index subgroups $\Gamma\leq G$ for which there exists a factor map $X\to G/\Gamma$. 
In Section \ref{sec:DisjointnessOfSubodometers} we study the disjointness of minimal actions to subodometers and show the following in Theorem \ref{the:disjointnessCharacterization}. 

\begin{theorem*}[\ref{the:disjointnessCharacterization}]
    Let $(X,G)$ be a minimal action and $(Y,G)$ be a subodometer. 
    The following statements are equivalent. 
    \begin{itemize}
        \item[(i)] 
        $X\perp Y$.
        \item[(ii)] 
        $Z\perp Y$, for every finite factor $Z$ of $X$. 
        \item[(iii)]
        $X \perp Z'$, for every finite factor $Z'$ of $Y$.
        \item[(iv)]
        $Z \perp Z'$, for all finite factors $Z$ and $Z'$ of $X$ and $Y$, respectively.
        \item[(v)]
        $\Lambda\Gamma=G$, for all $\Lambda\in \Eig(X,G)$ and $\Gamma\in \Eig(Y,G)$.
    \end{itemize}
\end{theorem*}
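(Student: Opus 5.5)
Since $X\perp Y$ means that $X\times Y$ is minimal and disjointness passes to factors, the implications (i)$\Rightarrow$(ii), (i)$\Rightarrow$(iii), (ii)$\Rightarrow$(iv) and (iii)$\Rightarrow$(iv) are immediate: a factor map $X\to Z$ induces a factor map $X\times Y\to Z\times Y$, and images of minimal systems are minimal. The plan is to close the circle with (iv)$\Rightarrow$(v)$\Rightarrow$(i).

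\emph{Step (iv)$\Leftrightarrow$(v).} A finite factor of a minimal action is a finite minimal action, hence transitive and conjugate to $G/\Lambda$ for the stabilizer $\Lambda$ of a point. So the finite factors of $X$ are exactly the $G/\Lambda$ with $\Lambda\in\Eig(X,G)$, and similarly for $Y$. Next, $G/\Lambda\times G/\Gamma$ is minimal iff $G$ acts transitively on it. This holds iff the stabilizer $\Lambda$ of $e\Lambda$ acts transitively on $G/\Gamma$, i.e.\ $\Lambda\Gamma=G$. Hence (iv) and (v) are equivalent.

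\emph{Step (v)$\Rightarrow$(i).} This is the main obstacle. I would use the description of subodometers from \cite{cortez2026minimal}: $Y$ is the inverse limit of a scale of finite actions, i.e.\ there is a decreasing sequence $\Gamma_1\supseteq\Gamma_2\supseteq\dots$ in $\Eig(Y,G)$ such that $Y$ embeds equivariantly into $\varprojlim G/\Gamma_n$ via the maps $\pi_n\colon Y\to G/\Gamma_n$, and the clopen fibres of the $\pi_n$ form a basis of the topology. If $Y$ is a genuine subodometer it is not a group rotation, so I will rely on the fibres $\pi_n^{-1}(g\Gamma_n)$ being a clopen partition whose diameters go to $0$ by equicontinuity and the Stone property. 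Fix $(x,y)$ and a nonempty open set $U\times V$ with $V\supseteq \pi_n^{-1}(h\Gamma_n)$. Since $X\times G/\Gamma_n$ is minimal, it suffices to find $g$ with $gx\in U$ and $gy\in\pi_n^{-1}(h\Gamma_n)$, i.e.\ $g\Gamma_n=h\gamma^{-1}\Gamma_n$ where $\pi_n(y)=\gamma\Gamma_n$. The orbit closure of $(x,\pi_n(y))$ in $X\times G/\Gamma_n$ must therefore be everything. So (i) reduces to $X\perp G/\Gamma$ for all $\Gamma\in\Eig(Y,G)$, which is (iii).

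It remains to show (v)$\Rightarrow X\perp G/\Gamma$ for $\Gamma$ of finite index. Take a minimal subset $M\subseteq X\times G/\Gamma$. Its fibre over $e\Gamma$, namely $M_e=\{x:(x,e\Gamma)\in M\}$, is closed and invariant under $\Gamma$. Because $M$ is minimal and projects onto the finite set $G/\Gamma$, the sets $gM_e$ partition (up to the $G/\Gamma$ indexing) a clopen decomposition of $X$. Consequently the set stabilizer $\Lambda=\{g: gM_e=M_e\}$ has finite index and contains $\Gamma\cap\Lambda$. I expect the map $X\to G/\Lambda$, $x\mapsto g\Lambda$ for $x\in gM_e$, to be a well-defined factor map, which requires checking that the translates $gM_e$ are either equal or disjoint. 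This follows from minimality of $X$ via the standard argument that $M_e$ is a minimal set of $\Gamma$ and that distinct $\Gamma$-minimal translates are disjoint. Then $\Lambda\in\Eig(X,G)$, so (v) gives $\Lambda\Gamma=G$. Finally, $M$ meets $X\times\{e\Gamma\}$ in $M_e$, and $\Lambda$ acts on $M_e$ while moving $e\Gamma$ around all of $\Lambda\Gamma/\Gamma=G/\Gamma$. A counting argument then shows that $M$ projects surjectively onto $X$ with full fibres, hence $M=X\times G/\Gamma$. The delicate points are the disjointness of the translates and the final counting, which I would check carefully.
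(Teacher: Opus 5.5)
\paragraph{Where the argument is fine.} Your implications (i)$\Rightarrow$(ii)$\Rightarrow$(iv), (i)$\Rightarrow$(iii)$\Rightarrow$(iv), (iv)$\Leftrightarrow$(v) and (iii)$\Rightarrow$(i) are correct and agree with the paper. For (iii)$\Rightarrow$(i) you should work with a generating scale $S\subseteq\Eig(Y,G)$ and nets rather than a decreasing sequence, since neither $G$ nor $Y$ is assumed countable or metrizable.

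\paragraph{The gap.} The genuine gap is in the step (v)$\Rightarrow X\perp G/\Gamma$. You assert that the translates $g.M_e$ are pairwise equal or disjoint because $M_e$ is $\Gamma$-minimal and distinct $\Gamma$-minimal sets are disjoint. But $g.M_e$ is minimal for $\Gamma^g=g\Gamma g^{-1}$, not for $\Gamma$. The argument therefore only works for normal $\Gamma$, and the claim is false in general.

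For $G=S_3$, $X=G/\grouphull{(13)}$ and $\Gamma=\grouphull{(12)}$, the orbit $M$ of $(\grouphull{(13)},\Gamma)$ has $M_e=\{\grouphull{(13)},(12)\grouphull{(13)}\}$. This is a two-point subset of the three-point space $X$. Its translates are the three two-element subsets of $X$, which pairwise intersect without being equal. So $x\mapsto g\Lambda$ is not well defined. Worse, $\Lambda=\{g;\, g.M_e=M_e\}$ need not lie in $\Eig(X,G)$ at all: for $G=S_4$, $X=G/\grouphull{(123)}$ and $\Gamma=\grouphull{(14)}$ one computes $\Lambda=\Gamma$, which contains no conjugate of $\grouphull{(123)}$.

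Your argument for this step never uses (v), so it would prove a false lemma. Under (v) the partition property does hold, but only a posteriori, because $M_e=X$ is exactly what is to be shown.

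\paragraph{The missing idea: pass to the normal core.} Pick $x_0\in M_e$ and put $B:=\overline{\Gamma_G.x_0}$.
\begin{itemize}
\item Since $\Gamma_G$ is normal of finite index, the translates $g.B$ are $\Gamma_G$-minimal sets. Hence they form a finite clopen partition of $X$, so $\Lambda:=G_0(B)\in\Eig(X,G)$ (Lemma \ref{lem:preparationForDisjointnessCharacterization}).
\item Since $\Gamma_G$ fixes the point $\Gamma$ and $M$ is closed and invariant, $B\times\{\Gamma\}\subseteq M$.
\item This $\Lambda$ need not contain $\Gamma$, so you cannot aim for $\Lambda=G$. Instead use $\Lambda\Gamma=G$, i.e.\ that $G$ acts transitively on $G/\Lambda\times G/\Gamma$.
\item Given $(x,g_2\Gamma)$ with $x\in g_1.B$, choose $h$ with $h\Lambda=g_1\Lambda$ and $h\Gamma=g_2\Gamma$. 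Then $(x,g_2\Gamma)\in h.(B\times\{\Gamma\})\subseteq M$, so $M=X\times G/\Gamma$.
\end{itemize}
This is exactly how the paper proves (v)$\Rightarrow$(iii).
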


It is well known that the disjointness of two actions implies the absence of nontrivial common factors, while the converse fails in general \cite[Chapter~11]{auslander1988minimal}. 
In Theorem \ref{the:characterizationNNCFsubodometers} we show the following. 

\begin{theorem*}[\ref{the:characterizationNNCFsubodometers}]
    Let $(X,G)$ be a minimal action and $(Y,G)$ be a subodometer. 
    The following statements are equivalent. 
    \begin{itemize}
        \item[(i)] $X$ and $Y$ have no nontrivial common factor.
        \item[(ii)] $X$ and $Y$ have no nontrivial common finite factor.
        \item[(iii)] $\grouphull{\Lambda\cup\Gamma}=G$, for all $\Lambda\in \Eig(X,G)$ and $\Gamma\in \Eig(Y,G)$.
    \end{itemize}    
\end{theorem*}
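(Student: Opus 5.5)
The plan is to prove (i)$\Rightarrow$(ii)$\Rightarrow$(iii)$\Rightarrow$(i), using the structure of subodometers from \cite{cortez2026minimal}. A subodometer $(Y,G)$ is an inverse limit of finite factors $G/\Gamma_n$, $\Gamma_n\in\Eig(Y,G)$. Here $(\Gamma_n)$ is a decreasing generating scale, so $Y$ is determined by the finite actions $G/\Gamma$ with $\Gamma\in\Eig(Y,G)$.

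The implication (i)$\Rightarrow$(ii) is immediate, since a nontrivial common finite factor is a nontrivial common factor.

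For (ii)$\Rightarrow$(iii), I argue by contraposition. Suppose $\Lambda\in\Eig(X,G)$ and $\Gamma\in\Eig(Y,G)$ satisfy $H:=\grouphull{\Lambda\cup\Gamma}\neq G$. Then $H$ has finite index, since it contains $\Lambda$. The natural map $G/\Lambda\to G/H$, $g\Lambda\mapsto gH$, is a well-defined equivariant surjection, and likewise for $G/\Gamma\to G/H$. Composing with the factor maps $X\to G/\Lambda$ and $Y\to G/\Gamma$ shows that $G/H$ is a common finite factor. It is nontrivial because $H\neq G$.

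For (iii)$\Rightarrow$(i), let $\pi_X\colon X\to W$ and $\pi_Y\colon Y\to W$ be factor maps onto a common factor $W$. Since $W$ is a factor of the subodometer $Y$, it is minimal, equicontinuous and zero-dimensional, so $W$ is itself a subodometer; I use that factors of Stone-space equicontinuous actions are again subodometers, which I expect is available from \cite{cortez2026minimal}. If $W$ is nontrivial, then since $W$ is a Stone space with more than one point, it has a nontrivial finite factor $G/H$: take a finite $G$-invariant clopen partition, which exists by equicontinuity. So $H\neq G$ and $H\in\Eig(X,G)\cap\Eig(Y,G)$. Taking $\Lambda=\Gamma=H$ gives $\grouphull{\Lambda\cup\Gamma}=H\neq G$, contradicting (iii).

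The main obstacle is the last step, namely that a nontrivial subodometer $W$ has a nontrivial finite factor. The argument is as follows.
\begin{itemize}
\item Pick a nontrivial clopen partition $\mathcal P$ of $W$.
\item Equicontinuity gives a clopen partition $\mathcal Q$ refining $\mathcal P$ whose $G$-translates are again partitions of $W$ into members of $\mathcal Q$, up to finitely many possibilities. The standard construction is to take the finite partition generated by $\{gU\}$ for a clopen $U$. This is finite because equicontinuity on a Stone space makes the orbit $\{gU: g\in G\}$ of a clopen set finite.
\item Minimality then makes the resulting $G$-invariant finite partition a single orbit $G/H$ with more than one element.
\end{itemize}
With this lemma in hand, the equivalence follows, and (ii)$\Leftrightarrow$(iii) is pure group theory.
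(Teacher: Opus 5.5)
Your proof is correct and follows essentially the paper's route. For (ii)$\Rightarrow$(iii) you use the common finite factor $G/\grouphull{\Lambda\cup\Gamma}$; for (iii)$\Rightarrow$(i) you note that a common factor is a subodometer whose eigenvalues lie in $\Eig(X,G)\cap\Eig(Y,G)$, and then take $\Lambda=\Gamma$.

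The only difference is in the last step. The paper concludes triviality from $\Eig(Z,G)=\{G\}$ by citing that $\Eig$ is a complete conjugacy invariant for subodometers. You instead prove directly that a nontrivial subodometer has a nontrivial finite factor, via the finite orbit of a clopen set under equicontinuity. This is the same construction the paper uses in (v)$\Rightarrow$(vi) of Theorem~\ref{the:characterizationDisjointnessToSubodometers}.
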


In \cite[Chapter 11]{auslander1988minimal} it was shown that a minimal action (that admits an invariant regular Borel probability measure) is weakly mixing\footnote{
    An action $(X,G)$ is called \emph{weakly mixing} if every invariant nonempty open subset $U\subseteq X^2$ is dense in $X^2$ \cite{auslander1988minimal}. 
}
if and only if it is disjoint from any equicontinuous action. 
In Theorem \ref{the:characterizationDisjointnessToSubodometers} we classify the minimal actions that are disjoint from all subodometers. 

\begin{theorem*}[\ref{the:characterizationDisjointnessToSubodometers}]
    For a minimal action the following statements are equivalent. 
    \begin{itemize}
        \item[(i)] $X\perp Y$ for all subodometers $(Y,G)$. 
        \item[(ii)] $X\perp Y$ for all odometers $(Y,G)$.
        \item[(iii)] $X\perp Y$ for all finite minimal actions $(Y,G)$. 
        \item[(iv)] $X\perp Y$ for all finite odometers $(Y,G)$.
        \item[(v)] $(X,G)$ has no nontrivial finite factors. 
        \item[(vi)] The maximal equicontinuous factor of $(X,G)$ is connected. 
    \end{itemize}
\end{theorem*}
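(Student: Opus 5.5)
We prove the cycle of implications (i)$\Rightarrow$(ii)$\Rightarrow$(iv) and (i)$\Rightarrow$(iii)$\Rightarrow$(iv), then (iv)$\Rightarrow$(v)$\Rightarrow$(i), and finally (v)$\Leftrightarrow$(vi).

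The implications (i)$\Rightarrow$(ii)$\Rightarrow$(iv) and (i)$\Rightarrow$(iii)$\Rightarrow$(iv) are immediate from the inclusions among the classes. Odometers are subodometers, and finite odometers are odometers. A finite minimal action is a minimal equicontinuous action on a finite discrete, hence Stone, space, so it is a subodometer. Finite odometers are finite minimal actions.

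For (iv)$\Rightarrow$(v), let $Z$ be a finite factor of $X$. Since $Z$ is minimal and finite, $G$ acts transitively on $Z$, so $Z\cong G/\Lambda$ with $\Lambda$ the stabilizer of a point, a finite index subgroup. $Z$ need not be a rotation, so pass to the normal core $N=\bigcap_{g}g\Lambda g^{-1}$, again of finite index, and take $Y=G/N$. This is a finite group rotation, i.e.\ a finite odometer. By (iv), $X\perp G/N$. Since $G/N\to G/\Lambda$ is a factor map, disjointness passes to factors, so $X\perp Z$. But a nontrivial finite minimal $Z$ is never disjoint from a factor of itself: the graph $\{(z,z)\}\subseteq Z\times Z$ is a proper closed invariant set. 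Lifting through $X\to Z$ gives a proper closed invariant subset of $X\times Z$, so $Z$ must be trivial. Hence (v).

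For (v)$\Rightarrow$(i), use Theorem~\ref{the:disjointnessCharacterization}, equivalence (i)$\Leftrightarrow$(ii). If $X$ has no nontrivial finite factor, the only finite factor $Z$ of $X$ is the one-point system. This is trivially disjoint from any $Y$, since $\{pt\}\times Y\cong Y$ is minimal. Alternatively, via (v) of that theorem, $\Eig(X,G)=\{G\}$, so $\Lambda\Gamma=G$ always holds.

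For (v)$\Leftrightarrow$(vi), let $\pi\colon X\to X_{eq}$ be the maximal equicontinuous factor. Finite factors of $X$ are equicontinuous, so they factor through $X_{eq}$; hence $X$ has a nontrivial finite factor iff $X_{eq}$ does.

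If $X_{eq}$ is disconnected, choose a proper nonempty clopen $U\subseteq X_{eq}$. Equicontinuity together with minimality and compactness should give that $\{gU:g\in G\}$ is finite. For this we use that $X_{eq}$ is uniformly equicontinuous, so $U$ is a union of cells of some invariant-entourage partition. The atoms of the finite Boolean algebra generated by these translates then form a finite $G$-invariant clopen partition. Minimality forces $G$ to permute the atoms transitively, giving a nontrivial finite factor.

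Conversely, a nontrivial finite factor of the connected space $X_{eq}$ would be a continuous surjection onto a discrete space with at least two points, which is impossible.

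The main obstacle is this finiteness step. The cleanest route is to use that $X_{eq}$ is a homogeneous space $K/H$ of a compact group $K$ (the Ellis group). Its component decomposition is then $G$-invariant, and the space of components $X_{eq}/{\sim}$ is a totally disconnected equicontinuous minimal factor, i.e.\ a subodometer. If nontrivial, it has a nontrivial finite factor, e.g.\ via the clopen-partition argument in the Stone setting or the results of \cite{cortez2026minimal}.
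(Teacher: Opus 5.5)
Your argument is correct, but it organizes the disjointness part differently from the paper. The paper proves (iv)$\Rightarrow$(i) by testing against the universal odometer. From (iv) it gets $\Lambda\Gamma=G$ for all $\Lambda\in\Eig(X,G)$ and all normal $\Gamma\in\subfin(G)$. Corollary~\ref{cor:disjointnessAndScales} then gives disjointness from the universal odometer, and factor inheritance gives (i). It closes the loop with (i)$\Rightarrow$(v)$\Rightarrow$(vi)$\Rightarrow$(i), the last step via Theorem~\ref{the:disjointnessCharacterization}.

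You instead go (iv)$\Rightarrow$(v) directly. You apply (iv) to the finite odometer $G/\Lambda_G$ lying over a finite factor $G/\Lambda$ of $X$, push disjointness down to $G/\Lambda$, and use that $X$ cannot be disjoint from a nontrivial factor of itself. The set $\{(x,\pi(x))\}$ is the right witness for that. In the paper's notation this is just $G=\Lambda\Lambda_G=\Lambda$. Your route is shorter and avoids both the universal odometer and the scale corollary. Both routes rely on Theorem~\ref{the:disjointnessCharacterization} to pass from finite factors to arbitrary subodometers.

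The one step you leave at the level of ``should give'' is the finiteness claim in (v)$\Rightarrow$(vi), and the paper makes it precise in a few lines. Put $\rho:=U^2\cup(X_{eq}\setminus U)^2$, a clopen equivalence relation and hence an entourage. Equicontinuity provides an invariant entourage $\delta\subseteq\rho$, so $R:=\bigcap_{g\in G}g.\rho$ is an invariant closed equivalence relation with $\delta\subseteq R\subseteq\rho$. Hence $R$ is an entourage and its classes are open. By compactness there are finitely many classes, and there are at least two since $R\subseteq\rho$. So $X_{eq}/R$ is already the desired nontrivial finite factor, and your Boolean-algebra step is unnecessary.

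Your fallback via the component space also works. It uses that the component quotient of a compact Hausdorff space is a Stone space and that factors of equicontinuous actions are equicontinuous. It does not need the Ellis group, since homeomorphisms always permute components.
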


\subsection*{Convention}
    Throughout the text, $G$ denotes a (discrete) group unless otherwise specified.

\section{Preliminaries}
\label{sec:prelims}

If $R$ is a relation on a set $X$ and $x\in X$ we denote \[R[x]:=\{x'\in X;\, (x',x)\in R\} 
~~\text{ and }~~ R^{-1}:=\{(x',x);\, (x,x')\in R\}.\] 
For relations $R$ and $R'$ on $X$ we denote $RR'$ for the set of all $(x,x')\in X^2$ for which there exists $y\in X$ with $(x,y)\in R$ and $(y,x')\in R'$. Note that this operation is associative. 

\subsection{Topology}
\label{subsec:prelims_topology}
A subset of a topological space is called \emph{clopen} if it is closed and open. A relation on a topological space $X$ is called \emph{closed/open/clopen} if it is closed/open/clopen as a subset of $X^2$ equipped with the product topology. 

\subsubsection{Uniformities of compact Hausdorff spaces}
\label{subsubsec:prelims_topology_UniformitiesOfCompactHausdorffSpaces}
Let $X$ be a compact Hausdorff space. 
We denote $\mathbb{U}_X$ for the \emph{uniformity of $X$}, i.e.\ the set of all neighbourhoods $\epsilon$ of the diagonal $\Delta_X:=\{(x,x);\, x\in X\}$ within $X^2$. 
The elements of $\mathbb{U}_X$ are called \emph{entourages}. 
An entourage $\epsilon\in \mathbb{U}_X$ is called symmetric if $\epsilon=\epsilon^{-1}$.
For all $\epsilon\in \mathbb{U}_X$ there exists a symmetric $\delta\in \mathbb{U}_X$ such that $\delta\delta\subseteq \epsilon$. 
See \cite[Chapter 6]{kelley2017general} for details on uniformities. 

For $\epsilon\in \mathbb{U}_X$ we denote $B_\epsilon(x):=\epsilon[x]$. 
A subset $\mathbb{B}\subseteq \mathbb{U}_X$ is called a \emph{base for $\mathbb{U}_X$} if for each $\epsilon\in \mathbb{U}_X$ there exists $\delta\in \mathbb{B}$ with $\delta\subseteq \epsilon$. 
Whenever $\mathbb{B}$ is a base for $\mathbb{U}_X$, then for $x\in X$ the set $\{B_\epsilon(x);\, \epsilon\in \mathbb{B}\}$ is a (topological) neighbourhood base of $x$ \cite[Corollary 30]{kelley2017general}. This allows one to recover the topology of $X$ from $\mathbb{U}_X$. 

\subsubsection{Stone spaces}
\label{subsubsec:prelims_topology_StoneSpaces}
A nonempty topological space $X$ is called \emph{totally disconnected} if the only connected subsets are the singletons \cite[Page~101]{illanes1999hyperspaces}.
It is called \emph{zero-dimensional} if there exists a base of clopen sets for the topology \cite[Page~64]{illanes1999hyperspaces}. 
A compact Hausdorff space is totally disconnected if and only if it is zero-dimensional \cite[Theorem 12.11]{illanes1999hyperspaces}.
A \emph{Stone} space is a totally disconnected compact Hausdorff space. 
Note that nonempty and closed subsets of Stone spaces are Stone spaces. 

\subsubsection{Hyperspaces}
\label{subsubsec:prelims_topology_Hyperspaces}
Let $X$ be a compact Hausdorff space.
We denote $\hyper(X)$ for the set of all closed nonempty subsets of $X$.
For a finite family  $\mathcal{V}$ of open subsets of $X$ we denote 
$\langle \mathcal{V}\rangle$ for the set of all $A\in \hyper(X)$, which intersect all $V\in \mathcal{V}$ and are contained in $\bigcup \mathcal{V}=\bigcup_{V\in \mathcal{V}} V$. 
The set $\{\langle \mathcal{V} \rangle;\, \mathcal{V}\}$, where $\mathcal{V}$ ranges over all finite collections of open subsets of $X$, is a base for a compact Hausdorff topology on $\hyper(X)$, called the \emph{Vietoris topology}. 
Note that $X$ can be identified with the compact subspace of singletons $\finite_1(X)\subseteq \hyper(X)$ via $x\mapsto\{x\}$. 

Whenever $\epsilon\in \mathbb{U}_X$ is symmetric we denote $B_\epsilon(A):=\bigcup_{x\in A} B_\epsilon(x)$ for $A\in \hyper(X)$ and 
 \begin{align*}
    \eta_\epsilon
    :=\{(A,A')\in \hyper(X)^2;\, A\subseteq B_\epsilon(A'), A'\subseteq B_\epsilon(A)\}. 
\end{align*}
The set $\{\eta_\epsilon;\, \epsilon\in \mathbb{U}_X\}$ is a base for the uniformity $\mathbb{U}_{\hyper(X)}$. For details see 
\cite[Theorem~3.3]{michael1951topologies} 
and \cite[Section~8.5]{engelking1989general}. 
Whenever $X$ is metrizable, then the \emph{Hausdorff metric} induces the Vietoris topology on $\hyper(X)$. We thus observe that $X$ is metrizable if and only if $\hyper(X)$ is metrizable. 
See \cite[Section~I.2]{illanes1999hyperspaces} for details. 

\subsubsection{Regular Borel probability measures}
\label{subsubsec:prelims_topology_RegularBorelProbabilityMeasures}
Whenever $X$ is a compact Hausdorff space we equip $X$ with the Borel $\sigma$-algebra. 
We denote $C(X)$ for the set of continuous maps $X\to \mathbb{R}$. 
We denote $\mathcal{M}(X)$ for the set of all regular Borel probability measures and equip $\mathcal{M}(X)$ with the weak-* topology inherited from $C(X)^*$ via the Riesz–Markov–Kakutani theorem. 
It follows from the Banach-Alaoglu theorem that $\mathcal{M}(X)$ is compact. 
For a finite subset $\mathfrak{f}\subseteq C(X)$ and $\epsilon>0$ we denote 
\[\eta_{(\mathfrak{f},\epsilon)}:=\left\{(\mu,\nu)\in \mathcal{M}(X)^2;\, \max_{f\in \mathfrak{f}}|\mu(f)-\nu(f)|\leq\epsilon\right\}.\]
Note that such sets form a base for the uniformity $\mathbb{U}_{\mathcal{M}(X)}$.

\subsection{Groups}
\label{subsec:prelims_groups}
Let $G$ be a group. 
For subsets $M$ and $M'$ of $G$ we denote 
$MM':=\{gg';\, g\in M,g'\in M'\}$. 
Similarly we define $gM$ and $Mg$ for $g\in G$. 
We denote $\grouphull{M}$ for the subgroup generated by $M$.
For subgroups $\Gamma$ and $\Lambda$ of $G$ we denote $\Gamma\leq \Lambda$ if $\Gamma$ is a subgroup of $\Lambda$. 
For a group homomorphism $\phi\colon G\to H$ we denote $\ker(\phi):=\{g\in G;\, \phi(g)=e_H\}$ for its \emph{kernel}.

A subgroup $\Gamma\leq G$ is said to be of finite index if the partition
$G/\Gamma:=\{g\Gamma;\, g\in G\}$ is finite.
We denote $[G\colon \Gamma]:=|G/\Gamma|$ for its \emph{index (in $G$)}. 
We denote $\subfin(G)$ for the set of all finite index subgroups of $G$.
Note that $\subfin(G)$ is \emph{upward closed}, i.e.\ that for $\Gamma\in \subfin(G)$ and $\Lambda\leq G$ with $\Gamma\leq \Lambda$ we have $\Lambda\in \subfin(G)$. 

For $\Gamma\leq G$ a subset $F\subseteq G$ is called a \emph{fundamental domain} if each $g\in G$ has a unique representation as a product $g=h\gamma$ with $(h,\gamma)\in F\times \Gamma$. Note that $F$ can be chosen to contain the identity element $e_G$. 
Whenever $\Gamma\in \subfin(G)$ we have $|F|=[G\colon \Gamma]<\infty$. 

\begin{remark}
\label{rem:fundamentalDomainsComposition}
    Whenever $F$ is a fundamental domain of $\Lambda\leq G$ and $F'$ is a fundamental domain of $\Gamma\leq \Lambda$, then $F'F$ is a fundamental domain of $\Gamma\leq G$. 
    Thus, for $\Gamma,\Lambda\in \subfin(G)$ with $\Gamma\leq \Lambda$ we have $[G\colon \Lambda][\Lambda \colon \Gamma]=[G\colon \Gamma]$. 
    In particular, whenever $\Gamma\leq \Lambda$ have the same index in $G$ we have $[\Lambda\colon \Gamma]=1$, i.e.\ $\Gamma=\Lambda$. 
\end{remark}

For $\Gamma\leq G$ and $g\in G$ we denote\footnote{
Note that our definition of $\Gamma^g$ differs from that in \cite{ribes2000profinite}, where $\Gamma^g$ denotes $g^{-1}\Gamma g$. Our convention is chosen so that it interacts more naturally with left actions. This will become clear in the discussion below.
} 
$\Gamma^g:=g\Gamma g^{-1}$ for the \emph{conjugate of $\Gamma$ by $g$}.
Note that $\Gamma^g\leq G$ and that $(\Gamma^g)^h=\Gamma^{(hg)}$ for $g,h\in G$. 
Two subgroups $\Gamma$ and $\Lambda$ are called \emph{conjugated} if there exists $g\in G$ with $\Gamma=\Lambda^g$. 
A subgroup $\Gamma\leq G$ is called \emph{normal (in $G$)} if $\Gamma^g=\Gamma$ for all $g\in G$. 
The \emph{normal core} of $\Gamma\leq G$ is given by $\Gamma_G:=\bigcap_{g\in G} \Gamma^g$. 
It is the largest subgroup of $\Gamma$ that is normal in $G$.
For further details we recommend \cite{ribes2000profinite}.

\subsection{Actions}
\label{subsec:prelims_actions}
Let $G$ be a group and $X$ be a compact Hausdorff space. 
An \emph{action} of $G$ on $X$ is a group homomorphism $\alpha$ from $G$ into the group of homeomorphisms $X\to X$.
For $g\in G$ and $x\in X$, we suppress the symbol for the action by simply writing $g.x:=\alpha(g)(x)$. 
This allows us to simply speak of an \emph{action $(X,G)$}. 
An action $(X,G)$ is called \emph{finite} if $X$ is finite.
See \cite{auslander1988minimal} for a reference for the following and more details on actions.  

Let $(X,G)$ be an action. 
For $x\in X$ we denote $G.x:=\{g.x; g\in G\}$ for the \emph{orbit} of $x$. An action $(X,G)$ is called \emph{minimal} if all $x\in X$ have a dense orbit. 
For $x\in X$ we denote $G_0(x):=\{g\in G;\, g.x=x\}$ for the \emph{stabilizer of $x$}. 
Note that $G_0(g.x)=G_0(x)^g$ holds for all $g\in G$.
For $\epsilon\in \mathbb{U}_X$ we denote $G_\epsilon(x):=\{g\in G;\, (g.x,x)\in \epsilon\}$. 

If $(X_i,G)_{i\in I}$ is a family of actions the \emph{product} $\prod_{i\in I} X_i$ is also a compact Hausdorff space. 
On $\prod_{i\in I} X_i$ we consider the action given by $g.(x_i)_{i\in I}:=(g.x_i)_{i\in I}$. 
For a subset $M\subseteq X$ and $g\in G$ we denote $g.M:=\{g.x;\, x\in M\}$. 
A subset $M\subseteq X$ is called \emph{invariant} if $g.M=M$ holds for all $g\in G$. 
A subset of $X^2$ is called \emph{invariant} if it is invariant w.r.t.\ $(X^2,G)$. 
If $A\subseteq X$ is a closed nonempty and invariant subset, then 
$G\times A\to A$ establishes an action on $A$, the \emph{subaction on $A$}. 

A pair $(x,x')\in X^2$ is called \emph{proximal} if for all $\epsilon\in \mathbb{U}_X$ there exists $g\in G$ such that $g.(x,x')\in \epsilon$. 
It is called \emph{distal} if it is not proximal. 
An action $(X,G)$ is called distal if all pairs in $X^2$ are distal.
See \cite{auslander1988minimal} for further reading.

\subsubsection{Equivariant maps}
\label{subsubsec:prelims_actions_EquivariantMaps}
A continuous mapping $\pi\colon X\to Y$ between actions $(X,G)$ and $(Y,G)$ is called \emph{equivariant} if $\pi(g.x)=g.\pi(x)$ holds for all $g\in G$ and $x\in X$. 
An equivariant homeomorphism is called a \emph{conjugacy}. 
A conjugacy $X\to X$ is called an \emph{automorphism}. 
Two actions are called \emph{conjugated} if there exists a conjugacy between them. 
An equivariant surjection is called a \emph{factor map}. 
$(X,G)$ is called an \emph{extension} of $(Y,G)$ and $(Y,G)$ is called a \emph{factor} of $(X,G)$ if there exists a factor map $\pi\colon X\to Y$. 
Any factor map is \emph{closed}, i.e.\ images of closed sets are closed. 
Any equivariant map between minimal actions is a factor map. 

If $R\subseteq X$ is an invariant closed equivalence relation, then $g.R[x]:=R[g.x]$ induces an action on $X/R$ and the quotient mapping $x\mapsto R[x]$ is a factor map.  
Furthermore, for a factor map $\pi\colon X\to Y$ we denote $R(\pi):=\{(x,x')\in X^2;\, \pi(x)=\pi(x')\}$ for the \emph{fibre relation}. 
$R(\pi)$ is an invariant closed equivalence relation and $(Y,G)$ is conjugated to $(X/R(\pi),G)$. 

\subsubsection{Equicontinuity}
\label{subsubsec:prelims_actions_Equicontinuity}
An action $(X,G)$ is called \emph{equicontinuous} if, for any $\epsilon\in \mathbb{U}_X$ there exists $\delta\in \mathbb{U}_X$ such that for all $(x,x')\in \delta$ and $g\in G$ we have $(g.x,g.x')\in \epsilon$. 
Note that $(X,G)$ is equicontinuous if and only if $\mathbb{U}_X$ admits a base consisting of invariant entourages. 
Minimal equicontinuous actions $(X,G)$ are \emph{coalescent}, i.e.\ any factor map $\pi\colon X\to X$ is a conjugacy \cite[Page~81]{auslander1988minimal}. In particular, minimal equicontinuous actions $(X,G)$ and $(Y,G)$ are conjugated, if and only if there exist factor maps $X\to Y$ and $Y\to X$. 
A minimal equicontinuous action $(X,G)$ is called \emph{regular}\footnote{
    Note that for minimal equicontinuous actions $(X,G)$ also $(X^2,G)$ is equicontinuous. 
    Thus for minimal equicontinuous actions the notion of regularity from \cite[Page~147]{auslander1988minimal} can be equivalently formulated as we did it.}, 
if for $x,x'\in X$ there exists an automorphism $\iota\colon X\to X$ with $\iota(x)=x'$.

\subsubsection{Rotations}
\label{subsubsec:prelims_actions_Rotations}
Let $G$ be a group and $X$ be a compact (Hausdorff) group.  
A \emph{(group) rotation} is an action of $G$ on $X$ of the form
$g.x = \phi(g) x$,
where $\phi \colon G \to X$ is a group homomorphism. 
Any rotation is equicontinuous. 
A rotation is minimal if and only if $\phi(G)$ is dense in $X$.
Note that for a minimal rotation, $\phi$ is uniquely determined by $\phi(g)=g.e_X$, where $e_X$ denotes the identity of $X$. 
Any minimal rotation $(X,G)$ is regular, since for $x,x'\in X$ we have that $y\mapsto yx^{-1}x'$ yields a conjugacy $\iota\colon X\to X$ with $\iota(x)= x'$.

\subsubsection{Induced dynamics}
\label{subsubsec:prelims_actions_InducedDynamics}
Let $(X,G)$ be an action. 
For $A\in \hyper(X)$ and $g\in G$ we denote $g.A:=\{g.x;\, x\in A\}$. 
This induces an action $(\hyper(X),G)$. 
Furthermore, for $g\in G$ and $f\in C(X)$ we denote $g^*f:=f\circ g$, where we identify $g$ with the homeomorphism induced by $g$. 
For $\mu\in \mathcal{M}(X)$ we denote $g.\mu(f):=\mu(g^*f)$. 
Note that $g.\mu\in \mathcal{M}(X)$. 
This defines an action $(\mathcal{M}(X),G)$. 
A measure $\mu\in \mathcal{M}(X)$ is called \emph{invariant} if $g.\mu=\mu$ holds for all $g\in G$. 
Any minimal equicontinuous action allows for a unique invariant regular Borel probability measure \cite[Chapter~7]{auslander1988minimal}. 
The following proposition is well known. See for example \cite[Proposition 7]{bauer1975topological} for the case of actions of $\mathbb{Z}$ on compact metrizable spaces. We include the short argument for the convenience of the reader. 

\begin{proposition}
    For an equicontinuous action $(X,G)$ the actions $(\hyper(X),G)$ and $(\mathcal{M}(X),G)$ are equicontinuous. 
\end{proposition}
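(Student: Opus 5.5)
We use the bases of the uniformities described in the preliminaries: $\{\eta_\epsilon\}$ for $\hyper(X)$ and $\{\eta_{(\mathfrak{f},\epsilon)}\}$ for $\mathcal{M}(X)$. In each case it suffices to find, for every basic entourage, a smaller entourage whose image under every $g\in G$ stays inside the basic one.

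\emph{Hyperspace.} Let $\epsilon\in\mathbb{U}_X$ be symmetric. By equicontinuity of $(X,G)$ there is a symmetric $\delta\in\mathbb{U}_X$ with $(g.x,g.x')\in\epsilon$ for all $(x,x')\in\delta$ and all $g\in G$; we may take $\delta$ symmetric by intersecting with $\delta^{-1}$. Suppose $(A,A')\in\eta_\delta$ and $g\in G$. If $y=g.x\in g.A$ with $x\in A$, then there is $x'\in A'$ with $(x,x')\in\delta$, so $(g.x,g.x')\in\epsilon$. Since $\epsilon$ is symmetric, this gives $g.x\in B_\epsilon(g.x')\subseteq B_\epsilon(g.A')$. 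The inclusion $g.A'\subseteq B_\epsilon(g.A)$ follows in the same way. Hence $g.\eta_\delta\subseteq\eta_\epsilon$ for all $g$, and $(\hyper(X),G)$ is equicontinuous.

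\emph{Measures.} Fix a finite $\mathfrak{f}\subseteq C(X)$ and $\epsilon>0$. The key claim is that for each $f\in C(X)$ the orbit $\{g^*f;\,g\in G\}$ is relatively compact in $C(X)$ with the sup norm. This family is uniformly bounded by $\|f\|_\infty$. It is also equicontinuous in the uniform sense: pick $\alpha\in\mathbb{U}_X$ with $|f(y)-f(y')|<\epsilon$ whenever $(y,y')\in\alpha$, which is possible because $f$ is uniformly continuous on compact $X$. Then equicontinuity of the action gives $\beta\in\mathbb{U}_X$ with $(g.x,g.x')\in\alpha$ for all $(x,x')\in\beta$ and all $g$. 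Arzel\`a--Ascoli then gives total boundedness, so there is a finite set $\mathfrak{g}\subseteq C(X)$ such that every $g^*f$, for $f\in\mathfrak{f}$ and $g\in G$, lies within $\epsilon/3$ in sup norm of some $h\in\mathfrak{g}$.

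Now let $(\mu,\nu)\in\eta_{(\mathfrak{g},\epsilon/3)}$, $g\in G$ and $f\in\mathfrak{f}$. Choose $h\in\mathfrak{g}$ with $\|g^*f-h\|_\infty\le\epsilon/3$. Since $\mu$ and $\nu$ are probability measures,
\[|g.\mu(f)-g.\nu(f)|\le|\mu(g^*f-h)|+|\mu(h)-\nu(h)|+|\nu(h-g^*f)|\le\epsilon.\]
So $g.\eta_{(\mathfrak{g},\epsilon/3)}\subseteq\eta_{(\mathfrak{f},\epsilon)}$ for all $g$, which proves equicontinuity.

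The only step that needs care is the compactness claim. Arzel\`a--Ascoli is usually stated on metric spaces, so in the general compact Hausdorff setting one argues total boundedness directly. Cover $X$ by finitely many sets $B_\beta(x_i)$. For each $g$, the vector $(g^*f(x_i))_i$ lies in a bounded subset of $\mathbb{R}^n$, so finitely many such vectors are $\epsilon$-dense among them. Two functions $g^*f$ and $g'^*f$ whose vectors are $\epsilon$-close are then $3\epsilon$-close in sup norm, since every point of $X$ is $\beta$-close to some $x_i$. This gives the finite net $\mathfrak{g}$ after rescaling $\epsilon$ and avoids any metrizability assumption.
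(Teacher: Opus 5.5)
Your proof is correct, but it takes a different route from the paper. The paper argues structurally. It cites the characterization that an action is equicontinuous if and only if it extends to a continuous action of a compact group $H$ (the closure of $G$ in the homeomorphism group). It then observes that the induced actions $(\hyper(X),H)$ and $(\mathcal{M}(X),H)$ are again continuous, and that continuous actions of compact groups are equicontinuous.

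You instead work directly with the entourage bases $\eta_\epsilon$ and $\eta_{(\mathfrak{f},\epsilon)}$. The hyperspace case becomes a one-line transfer of the $\delta$ from $X$, giving $g.\eta_\delta\subseteq\eta_\epsilon$ for all $g\in G$. For measures, you replace the compactness of $H$ with a hand-proved total boundedness of the orbits $\{g^*f;\, g\in G\}$ in $(C(X),\|\cdot\|_\infty)$, using a finite $\beta$-cover. That is exactly the Arzel\`a--Ascoli content hidden in the paper's argument. It sits both in the compactness of $H$ and in the ``straightforward'' joint continuity of $(h,\mu)\mapsto h.\mu$, which needs $h\mapsto f\circ h$ to be norm-continuous.

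What each approach buys:
\begin{itemize}
\item The paper's version is shorter and treats both induced spaces uniformly, but it leans on a nontrivial structure theorem and leaves the continuity checks implicit.
\item Yours is self-contained and elementary, and it works for arbitrary compact Hausdorff $X$ without metrizability. It also gives explicit control of the entourages on the induced spaces.
\end{itemize}
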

\begin{proof}
    Recall from \cite[Proposition~2.1]{auslander1988minimal} and \cite[Theorem~3.2]{auslander1988minimal} that an action $(X,G)$ is equicontinuous if and only if there exists a compact group $H$ and a continuous\footnote{An action of a topological group $G$ is called continuous if $G\times X\to X$ is continuous.} action $(X,H)$ that extends $(X,G)$. 
    It is straightforward to observe that $(\hyper(X),H)$ is a continuous action that extends $(\hyper(X),G)$. It follows that $(\hyper(X),G)$ is equicontinuous. A similar argument also yields the statement for $(\mathcal{M}(X),G)$. 
\end{proof}

\subsubsection{Maximal equicontinuous factors}
\label{subsubsec:prelims_actions_MaximalEquicontinuousFactor}
For any action $(X,G)$ there exists a \emph{maximal equicontinuous factor}, i.e.\ a factor map $\pi_{\operatorname{MEF}}\colon X\to X_{\operatorname{MEF}}$ onto an equicontinuous action $(X_{\operatorname{MEF}},G)$ such that for any factor map $\pi\colon X\to Y$ onto an equicontinuous action $(Y,G)$ there exists a factor map $\psi\colon X_{\operatorname{MEF}}\to Y$ with $\pi=\psi \circ \pi_{\operatorname{MEF}}$. The maximal equicontinuous factor is unique up to conjugacy. For details see \cite{auslander1988minimal}. 

\subsubsection{Eigenvalues}
\label{subsubsec:prelims_actions_Eigenvalues}
Let $(X,G)$ be a minimal action. 
A finite index subgroup $\Gamma\leq G$ is called an \emph{eigenvalue} of $(X,G)$ if $G/\Gamma$ is a factor of $(X,G)$. We denote $\Eig(X,G)$ for the set of all \emph{eigenvalues} of $(X,G)$. 
For $x\in X$ we denote $\Eig_x(X,G)$ for the set of all $\Gamma\in \subfin(G)$ for which there exists a factor map $\pi\colon X\to G/\Gamma$ with $\pi(x)=\Gamma$.

Consider a subset $S\subseteq \subfin(G)$. 
$S$ is called a \emph{scale} if for $\Gamma,\Gamma'\in S$ there exists $\Lambda\in S$ with $\Lambda\subseteq \Gamma\cap \Gamma'$. 
$S$ is called \emph{intersection closed} if for $\Gamma,\Lambda\in S$ also $\Gamma\cap \Lambda\in S$. 
$S$ is called \emph{upward closed} if for all $\Gamma\in S$ and all $\Lambda\in \subfin(G)$ with $\Gamma\subseteq \Lambda$ it follows that $\Lambda\in S$. 
$S$ is called a \emph{filter} if $S$ is an upward closed scale. 
$S$ is called \emph{conjugation invariant} if for $\Gamma\in S$ and $g\in G$ also $\Gamma^g\in S$.
$S$ is called \emph{core-stable} if for $\Gamma\in S$ also $\Gamma_G\in S$. 
Note that any filter is intersection closed. 

For $S\subseteq \subfin(G)$ we denote $\eigenhull{S}$ for the set of all $\Lambda\in \subfin(G)$ such that there exists $\Gamma\in S$ and $g\in G$ with $\Gamma^g\subseteq \Lambda$. $\eigenhull{S}$ is the smallest conjugation invariant and upward closed subset of $\subfin(G)$ that contains $S$ \cite[Remark~5.12]{cortez2026minimal}. 
For any minimal action $\Eig(X,G)$ is conjugacy invariant and upward closed. 
Furthermore, for $x\in X$ the set $\Eig_x(X,G)$ is a filter that satisfies $\eigenhull{\Eig_x(X,G)}=\Eig(X,G)$ \cite[Theorem 4.7 and Proposition~5.13]{cortez2026minimal}.

\subsection{Subodometers}
\label{subsec:prelims_subodometers}. 
We next collect some results on subodometers. 
For further details see \cite{cortez2026minimal}. 

\begin{proposition}\cite[Proposition~5.1 and Theorem~5.5]{cortez2026minimal}
\label{pro:EIGfactorsAndConjugacyInvariant}
    \begin{itemize}
        \item[(i)] Let $(X,G)$ be a minimal action. 
        A subodometer $(Y,G)$ is a factor of $(X,G)$ if and only if $\Eig(Y,G)\subseteq \Eig(X,G).$
        \item[(ii)] Two subodometers are conjugated if and only if they have the same eigenvalues. 
        \item[(iii)] Any factor of a subodometer is a subodometer. 
    \end{itemize}     
\end{proposition}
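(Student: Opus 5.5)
We prove (iii) first, since (i) and (ii) rest on the description of a subodometer by its finite factors. The basic structural fact is this. Let $(Y,G)$ be a subodometer. By equicontinuity and zero-dimensionality, $\mathbb{U}_Y$ has a base of invariant clopen equivalence relations $E$. Each such $E$ has finitely many classes, so $Y/E$ is a finite minimal action. Fixing $y\in Y$, the action $Y/E$ is conjugate to $G/\Gamma_E$ with $\Gamma_E:=\{g;\, g.E[y]=E[y]\}$, via $E[y]\mapsto \Gamma_E$. Hence $\Gamma_E\in \Eig_y(Y,G)$. Since these relations separate points, the map $Y\to \prod_E Y/E$ is injective. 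Therefore $Y$ is conjugate to the inverse limit of the $G/\Gamma$, $\Gamma\in\Eig_y(Y,G)$, along the canonical maps $G/\Gamma\to G/\Lambda$ for $\Gamma\le\Lambda$. I will use this as the standing description.

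\emph{(iii).} Let $\pi\colon Y\to Z$ be a factor map, and put $R=R(\pi)$. Equicontinuity passes to $Z$: $\pi$ is uniformly continuous and closed, so images of invariant entourages form a base of invariant entourages of $Z$. For total disconnectedness, take each invariant clopen equivalence relation $E$ as above and let $E_R$ be the equivalence relation generated by $R\cup E$. Each $E_R$-class is a union of $E$-classes, of which there are finitely many, so $E_R$ is clopen and invariant. It contains $R$, so it descends to a finite partition of $Z$ into clopen invariant-compatible sets. The key step, and the main obstacle, is to show $R=\bigcap_E E_R$, so that these partitions separate points of $Z$. If $(y,y')\notin R$, pick disjoint closed $\pi$-saturated neighbourhoods of $R[y]$ and $R[y']$. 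By compactness and equicontinuity, some small $E$ cannot chain from one to the other through $R$-classes, since each step of the chain moves only by $E$ inside saturated sets. So $(y,y')\notin E_R$. Then $Z$ is the inverse limit of the finite actions $Y/E_R$, hence a Stone space, and it is minimal as a factor of a minimal action.

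\emph{(i).} If $Y$ is a factor of $X$, composing factor maps gives $\Eig(Y,G)\subseteq\Eig(X,G)$. Conversely, suppose $\Eig(Y,G)\subseteq\Eig(X,G)$ and fix $y$. For $\Gamma\in\Eig_y(Y,G)$ let $C_\Gamma$ be the set of $x\in X$ admitting a factor map $X\to G/\Gamma$ with $x\mapsto\Gamma$. This set is nonempty because $\Gamma\in\Eig(X,G)$. It is closed, since it is a union of preimages of the point $\Gamma$ under the finitely many factor maps $X\to G/\Gamma$ (there are at most $[G:\Gamma]$ of them, each determined by the image of one point). Because $\Eig_y(Y,G)$ is a filter, it is intersection closed, and $C_{\Gamma_1\cap\dots\cap\Gamma_n}\subseteq C_{\Gamma_1}\cap\dots\cap C_{\Gamma_n}$ by composing with the canonical maps. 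So the family $\{C_\Gamma\}$ has the finite intersection property, and compactness yields $x\in\bigcap_\Gamma C_\Gamma$. The factor maps $\pi_\Gamma\colon X\to G/\Gamma$ with $\pi_\Gamma(x)=\Gamma$ are unique: they are determined on the dense orbit $G.x$. Hence they are compatible with the canonical maps $G/\Gamma\to G/\Lambda$, and together they define an equivariant map $X\to\varprojlim G/\Gamma\cong Y$. This map is surjective by minimality.

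\emph{(ii).} If two subodometers have the same eigenvalues, (i) gives factor maps in both directions. Coalescence of minimal equicontinuous actions then makes them conjugate. The converse is immediate.
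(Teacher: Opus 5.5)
\textbf{(i) and (ii) are fine; (iii) has a genuine gap.}

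Your treatment of (i) and (ii) is correct. The sets $C_\Gamma$ are closed, and intersection-closedness of $\Eig_y(Y,G)$ gives the finite intersection property. A point $x\in\bigcap_\Gamma C_\Gamma$ then yields compatible factor maps, hence a factor map $X\to\varprojlim_{\Gamma\in\Eig_y(Y,G)}G/\Gamma\cong Y$. Part (ii) follows from coalescence, as recorded in the preliminaries. (The paper gives no proof of this proposition; it imports it from the cited work.)

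The gap is in (iii), at exactly the step you flag as the main obstacle. The identity $R=\bigcap_E E_R$ says precisely that the points of $Z$ are separated by finite invariant clopen partitions. So it is essentially the whole content of (iii), not a lemma on the way to it, and your sentence about chains does not establish it. An $E$-step from a point near the boundary of a saturated neighbourhood leaves that neighbourhood, and the next $R$-step can jump arbitrarily far. Your sketch also never uses minimality, and for the trivial group equicontinuity and invariance are vacuous. So consider $G$ trivial, $Y$ the Cantor set, and $R$ the fibre relation of the Cantor function $Y\to[0,1]$. Every $E_R$-class is clopen and $R$-saturated, so its image in $[0,1]$ is a nonempty clopen set, hence all of $[0,1]$. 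Thus $E_R=Y^2$ for every clopen equivalence relation $E$, while $R\neq Y^2$. The same reasoning cannot succeed without a genuinely dynamical input.

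\textbf{How to close the gap.} The missing idea is that regular recurrence passes to factors, and your standing description already supplies it: $\Gamma_E.y\subseteq E[y]$. Let $z=\pi(y)$ and let $W\ni z$ be open. Choose $E$ with $E[y]\subseteq\pi^{-1}(W)$; then $\Gamma_E.z\subseteq W$, and passing to the normal core you may take this subgroup normal. So the minimal action $(Z,G)$ is pointwise regularly recurrent, and Corollary~\ref{cor:characterizationSubodometersViaRegularRecurrence} gives (iii).

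If you prefer to keep your $E_R$ framework, the same idea repairs the key step:
\begin{itemize}
\item Choose $W$ with $\pi(y')\notin\overline{W}$ and a normal $\Gamma\in\subfin(G)$ with $\Gamma.z\subseteq W$.
\item By Lemma~\ref{lem:normalSubgroupMinimalComponents} and the minimality of $Z$, the sets $g.\overline{\Gamma.z}$ are the minimal components of $(Z,\Gamma)$. They form a finite, hence clopen, invariant partition of $Z$ separating $z$ from $\pi(y')$.
\item The pullback of this partition is an invariant clopen equivalence relation on $Y$ containing $R$ and some $E$, hence containing $E_R$, but not containing $(y,y')$.
\end{itemize}
This is where minimality of $Z$ enters.

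\textbf{Secondary point.} Your equicontinuity step is mis-justified. A closed map need not send entourages to neighbourhoods of the diagonal; collapse the endpoints of $[0,1]$ to a point for a counterexample. What is needed is openness of $\pi$, as used in Proposition~\ref{pro:hyperspacefactorsAsQuasifactors}. However, this step becomes unnecessary once $Z$ is exhibited as an inverse limit of finite actions.
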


\begin{proposition}\cite[Proposition~3.2]{cortez2026minimal} 
\label{pro:characterizationEquicontinuousStone} 
    A minimal action $(X,G)$ is a subodometer if and only if $\mathbb{U}_X$ allows for a base consisting of invariant equivalence relations. 
\end{proposition}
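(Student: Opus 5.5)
We prove the two directions of Proposition~\ref{pro:characterizationEquicontinuousStone} separately, using throughout that for a compact Hausdorff space every open set containing the compact set $\Delta_X$ is an entourage, and that open relations are neighbourhoods of the diagonal exactly when they contain it.

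For the direction ``$\Leftarrow$'' we assume $\mathbb{U}_X$ has a base $\mathbb{B}$ of invariant equivalence relations. \emph{Equicontinuity} is immediate. Given $\epsilon\in\mathbb{U}_X$, we choose $R\in\mathbb{B}$ with $R\subseteq\epsilon$. Then $(x,x')\in R$ implies $(g.x,g.x')\in R\subseteq \epsilon$ for all $g\in G$, by invariance. For \emph{total disconnectedness}, each $R\in\mathbb{B}$ is a neighbourhood of the diagonal, so each class $R[x]$ is a neighbourhood of $x$. A class that is a neighbourhood of each of its points is open. Its complement is a union of classes, hence open, so each $R[x]$ is clopen. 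Since $\{R[x];\,R\in\mathbb{B}\}$ is a neighbourhood base at $x$, $X$ has a clopen base, i.e.\ it is zero-dimensional. As it is compact Hausdorff, it is a Stone space. Together with minimality, $(X,G)$ is a subodometer.

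For the direction ``$\Rightarrow$'' we assume $(X,G)$ is equicontinuous and $X$ is Stone, and we fix $\epsilon\in\mathbb{U}_X$. We aim to build an invariant clopen equivalence relation inside $\epsilon$. First we find an equivalence relation: by compactness and zero-dimensionality there is a finite clopen partition $\mathcal{P}$ of $X$ whose induced relation $R_{\mathcal{P}}:=\bigcup_{P\in\mathcal{P}}P\times P$ lies in $\epsilon$. Concretely, we pick $\delta$ with $\delta\delta\subseteq\epsilon$ symmetric and cover $X$ by clopen sets $U$ with $U\times U\subseteq\epsilon$, then refine the finite subcover to a partition. The relation $R_{\mathcal{P}}$ is clopen and hence an entourage.

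Next we make it invariant. By equicontinuity there is $\delta\in\mathbb{U}_X$ with $(g.x,g.x')\in R_{\mathcal{P}}$ for all $g$ and all $(x,x')\in\delta$. We set
\[R:=\bigcap_{g\in G}(g^{-1}\times g^{-1})(R_{\mathcal{P}})=\{(x,x');\,(g.x,g.x')\in R_{\mathcal{P}}\ \forall g\in G\}.\]
This is an intersection of equivalence relations, hence an equivalence relation. It is invariant, satisfies $\delta\subseteq R\subseteq R_{\mathcal{P}}\subseteq\epsilon$, and is therefore an entourage. Such $R$ then form the desired base. We only need $R$ to be an entourage; clopenness of its classes follows automatically, as in the first direction.

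The main subtlety is the step $\delta\subseteq R$: the infinite intersection of clopen sets need not be open in general, and it is exactly equicontinuity that forces $R$ to contain a neighbourhood of the diagonal. Minimality plays no role in this equivalence beyond matching the definition of a subodometer.
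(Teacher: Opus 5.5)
Your proof is correct in both directions. The paper does not reprove this statement but imports it from \cite[Proposition~3.2]{cortez2026minimal}; still, your two key devices are exactly the arguments the paper itself uses in the proof of Theorem~\ref{the:characterizationDisjointnessToSubodometers}, (v)$\Rightarrow$(vi), so this is essentially the intended approach. Those devices are invariantizing the clopen partition relation as $R=\bigcap_{g\in G}(g^{-1}\times g^{-1})(R_{\mathcal{P}})$, which is an entourage because equicontinuity supplies $\delta\subseteq R$, and deducing clopenness of the classes from the fact that each class of an equivalence-relation entourage is a neighbourhood of each of its points.
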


\subsubsection{Finite subodometers}
\label{subsubsec:prelims_subodometers_FiniteSubodometers} 
Any finite minimal action is a subodometer.
For finite actions $(X,G)$ and $x\in X$ we have $G_0(x)\in \subfin(G)$. 

Let $(X,G)$ and $(Y,G)$ be finite subodometers and $(x,y)\in X\times Y$. It is straightforward to show that there exists a factor map $\pi\colon X\to Y$ with $\pi(x)=y$ if and only if $G_0(x)\subseteq G_0(y)$. 
Furthermore, there exists a conjugacy $\iota\colon X\to Y$ with $\iota(x)=y$ if and only if $G_0(x) = G_0(y)$. 

For $\Gamma\in \subfin(G)$ we denote $G/\Gamma$ for the finite partition $\{h\Gamma;\, h\in G\}$ and consider the induced action $(G/\Gamma,G)$ given by $g.(h\Gamma):=gh\Gamma$ for $g\in G$ and $h\Gamma\in G/\Gamma$. 
Clearly, we have $G_0(\Gamma)=\Gamma$.
Thus, whenever $(X,G)$ is a finite subodometer and $x\in X$, then $(X,G)$ and $(G/G_0(x),G)$ are conjugated and the conjugacy can be chosen such that $x\mapsto G_0(x)$. 
Note that $\Gamma$ is normal if and only if $(G/\Gamma,G)$ is an odometer. 

\subsubsection{Scales for subodometers}
\label{subsubsec:prelims_subodometers_ScalesForSubodometers}
Let $S\subseteq \subfin(G)$ be a scale.
Note that for $\Gamma_1,\Gamma_2\in S$ with $\Gamma_1\subseteq \Gamma_2$ there exists a unique factor map $\pi_{\Gamma_2}^{\Gamma_1}\colon G/\Gamma_1 \to G/\Gamma_2$ with $\Gamma_1\mapsto \Gamma_2$. 
For $\Gamma_1,\Gamma_2,\Gamma_3\in \subfin(G)$ with $\Gamma_1\subseteq \Gamma_2\subseteq \Gamma_3$ we have $\pi_{\Gamma_3}^{\Gamma_1}=\pi_{\Gamma_3}^{\Gamma_2}\circ \pi_{\Gamma_2}^{\Gamma_1}$. 
Thus, any scale $S$ induces an inverse system $(\pi_{\Lambda}^{\Gamma})_{(\Gamma,\Lambda)\in S_*^2}$ of factor maps, where we abbreviate 
$S_*^2:=\{(\Gamma,\Lambda)\in S^2;\, \Gamma\subseteq \Lambda\}$.
The \emph{$S$-subodometer} is given by the inverse limit 
    \[
    \varprojlim_{\Gamma \in S} G\big/\Gamma 
    := 
    \Bigl\{ (x_\Gamma)_{\Gamma \in S} \in \prod_{\Gamma \in S} G\big/\Gamma 
    ;\, 
    \pi_{\Lambda}^\Gamma(x_\Gamma) = x_{\Lambda} \text{ for all } (\Gamma,\Lambda)\in S_*^2
    \Bigr\}.
    \] 
Note that the properties of minimality, total disconnectedness and equicontinuity are preserved under inverse limits.
Thus the $S$-subodometer is a subodometer. 
For a scale $S$ we say that a subodometer $(X,G)$ is \emph{generated by $S$} if it is conjugated to the $S$-subodometer.     
See \cite[Section 1.1]{ribes2000profinite} for further details on inverse systems and inverse limits. 

Any subodometer $(X,G)$ can be represented as such an inverse limit. 
As presented in \cite[Theorem~4.7]{cortez2026minimal} for any choice $x\in X$ the subodometer $(X,G)$ is generated by the scale $\Eig_x(X,G)$.
Furthermore, a scale $S$ generates a subodometer $(X,G)$ if and only if $\Eig(X,G)=\eigenhull{S}$, i.e.\ if for all $\Gamma\in \Eig(X,G)$ there exists $\Lambda\in S$ and $g\in G$ with $\Lambda\subseteq \Gamma^g$ \cite[Corollary~5.15]{cortez2026minimal}.

\subsubsection{Odometers}
\label{subsubsec:prelims_subodometers_Odometers}

\begin{proposition}\cite[Theorem~7.6]{cortez2026minimal}
\label{pro:odometerCharacterizationEIG}
    For a subodometer $(X,G)$ the following statements are equivalent. 
    \begin{itemize}
        \item[(i)] $(X,G)$ is an odometer. 
        \item[(ii)] $(X,G)$ is generated by a scale consisting of normal subgroups of $G$. 
        \item[(iii)] $\Eig(X,G)$ is a scale\footnote{Note that $\Eig(X,G)$ is always upward closed and hence $\Eig(X,G)$ is a scale if and only if it is intersection closed if and only if it is a filter.}.
        \item[(iv)] $\Eig(X,G)$ is core-stable.
    \end{itemize}
\end{proposition}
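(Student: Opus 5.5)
Although this proposition is quoted from \cite[Theorem~7.6]{cortez2026minimal}, here is how I would prove it from the material above. I will show (i)$\Rightarrow$(iv)$\Rightarrow$(ii)$\Rightarrow$(i) and, separately, (ii)$\Leftrightarrow$(iii). Throughout, $(X,G)$ is determined up to conjugacy by $\Eig(X,G)$ (Proposition~\ref{pro:EIGfactorsAndConjugacyInvariant}(ii)). A scale $S$ generates $(X,G)$ if and only if $\eigenhull{S}=\Eig(X,G)$.

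\emph{(i)$\Rightarrow$(iv).} Let $X$ be a compact group with dense homomorphism $\phi\colon G\to X$, and let $\Gamma\in\Eig(X,G)$. Choose a factor map $\pi\colon X\to G/\Gamma$ and compose it with a translation so that $\pi(e_X)=\Gamma$; this keeps $\pi$ equivariant, since right translations are automorphisms of a rotation. The stabilizer-type set $K:=\pi^{-1}(\Gamma)$ is clopen. I claim that $N:=\bigcap_{g\in G}g.K$ is a clopen normal subgroup. The key point is that right translations commute with the action, so the induced map $X\to G/\Gamma_G$, $x\mapsto(\pi(g^{-1}.x))$, is an equivariant map into the finite product of the orbits. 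Concretely, the map $X\to\prod_{g\in F}G/\Gamma^{g}$ given by $x\mapsto(\pi(x y_g))_g$, with $y_g$ chosen so that $\pi(y_g)=g\Gamma$, has image the minimal orbit of $(\Gamma^g)_g$. Its stabilizer is $\bigcap_g\Gamma^g=\Gamma_G$, so $G/\Gamma_G$ is a factor. Hence $\Gamma_G\in\Eig(X,G)$.

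\emph{(iv)$\Rightarrow$(ii).} Set $S:=\{\Gamma\in\Eig(X,G);\ \Gamma \text{ normal}\}$. By core-stability, for each $\Gamma\in\Eig(X,G)$ we have $\Gamma_G\in S$ and $\Gamma_G\subseteq\Gamma$, so $\eigenhull{S}\supseteq\Eig(X,G)$. The reverse inclusion holds because $\Eig(X,G)$ is conjugation invariant and upward closed. It remains to show that $S$ is a scale. For $\Gamma,\Gamma'\in S$, take $x\in X$. Since $\Gamma,\Gamma'$ are normal, their conjugates coincide, so $\Gamma,\Gamma'\in\Eig_x(X,G)$, which is a filter. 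Therefore $\Gamma\cap\Gamma'\in\Eig(X,G)$, and it is normal.

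\emph{(ii)$\Rightarrow$(i).} If $S$ consists of normal subgroups, each $G/\Gamma$ is a finite group and the maps $\pi^\Gamma_\Lambda$ are group homomorphisms. The inverse limit is then a profinite group, and $G$ acts on it by the rotation along the diagonal homomorphism $g\mapsto(g\Gamma)_\Gamma$, whose image is dense.

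\emph{(ii)$\Rightarrow$(iii).} With $S$ normal, a subgroup $\Lambda$ lies in $\Eig(X,G)=\eigenhull{S}$ exactly when it contains some element of $S$. Since $S$ is a scale, this set is intersection closed.

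\emph{(iii)$\Rightarrow$(iv).} If $\Gamma\in\Eig(X,G)$, then all conjugates $\Gamma^g$ lie in $\Eig(X,G)$. Only finitely many of them are distinct, because the normalizer of $\Gamma$ has finite index. Their intersection $\Gamma_G$ is therefore a finite intersection and lies in $\Eig(X,G)$.

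The main obstacle I expect is (i)$\Rightarrow$(iv), specifically producing the factor onto $G/\Gamma_G$ rigorously. The approach I would take is the construction already sketched: use that right translations $R_y$ are automorphisms commuting with the action. The map $x\mapsto(\pi(xy))_{y\in Y}$, for a finite set $Y$ hitting each fibre of $\pi$, is equivariant into $\prod G/\Gamma$. The image of $e_X$ is the point $(g\Gamma)_{g}$, whose stabilizer is $\bigcap_g g\Gamma g^{-1}=\Gamma_G$. By minimality this map is a factor onto the orbit of that point, which is conjugate to $G/\Gamma_G$.
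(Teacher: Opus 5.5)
The paper does not prove this proposition; it is imported from \cite[Theorem~7.6]{cortez2026minimal}. Your argument therefore has to stand on its own, and it does. The chain (i)$\Rightarrow$(iv)$\Rightarrow$(ii)$\Rightarrow$(i), together with (ii)$\Rightarrow$(iii)$\Rightarrow$(iv), covers all equivalences. The steps (iv)$\Rightarrow$(ii), (ii)$\Rightarrow$(i), (ii)$\Rightarrow$(iii) and (iii)$\Rightarrow$(iv) are correct as written; in the last one you tacitly use that an upward closed scale is intersection closed, which is the footnote.

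The problems are all in the first half of (i)$\Rightarrow$(iv), and you should delete those sentences rather than keep them as motivation.
\begin{itemize}
\item The set $N=\bigcap_{g\in G}g.K$ is not a subgroup. In the paper's notation $g.K=\phi(g)K=\pi^{-1}(g\Gamma)$, so these sets are pairwise disjoint fibres and $N=\emptyset$ whenever $\Gamma\neq G$. What would work is $\bigcap_{g\in G}\phi(g)K\phi(g)^{-1}$, built from conjugates of the clopen subgroup $K=\overline{\phi(\Gamma)}$ rather than its translates. That set is clopen, normal in $X$, and has $\phi$-preimage $\Gamma_G$.
\item The map $x\mapsto(\pi(g^{-1}.x))_g$ is not equivariant for the diagonal action. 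Indeed $\pi(g^{-1}h.x)=g^{-1}h.\pi(x)$, whereas $h.\pi(g^{-1}.x)=hg^{-1}.\pi(x)$.
\end{itemize}
The right-translation map $x\mapsto(\pi(xy_g))_{g\in F}$ into $\prod_{g\in F}G/\Gamma$ from your last paragraph is the correct construction. It is continuous and equivariant, and it sends $e_X$ to $(g\Gamma)_{g\in F}$. That point has stabilizer $\Gamma_F=\Gamma_G$ when $F$ is a set of coset representatives. By minimality the image is this finite orbit, which is conjugate to $G/\Gamma_G$.

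The right-translation trick is exactly the regularity of minimal rotations recorded in the preliminaries. Combined with the filter property of $\Eig_x(X,G)$, which you already use in (iv)$\Rightarrow$(ii), it gives a shorter route:
\begin{itemize}
\item An automorphism $\iota$ with $\iota(x)=x'$ turns a factor map $\pi$ with $\pi(x')=\Lambda$ into $\pi\circ\iota$ with $x\mapsto\Lambda$. So for a regular subodometer, $\Eig_x(X,G)$ is independent of $x$.
\item Since $\Eig_{g.x}(X,G)=\{\Lambda^g;\,\Lambda\in\Eig_x(X,G)\}$, the filter $\Eig_x(X,G)$ is conjugation invariant. Hence it equals $\eigenhull{\Eig_x(X,G)}=\Eig(X,G)$.
\end{itemize}
This gives (i)$\Rightarrow$(iii) directly, without building any map into a product, and your (iii)$\Rightarrow$(iv) then finishes as before.
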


\subsubsection{The universal odometer}
\label{subsubsec:prelims_subodometers_TheUniversalOdometer}
There exists a \emph{universal odometer} $(X,G)$ that has all subodometers $(Y,G)$ as factors \cite[Section~9]{cortez2026minimal}. 
It satisfies $\Eig(X,G)=\subfin(G)$ and is generated by the scale $S$ consisting of all normal $\Gamma\in \subfin(G)$. 

\subsubsection{The enveloping odometer}
\label{subsubsec:prelims_subodometers_TheEnvelopingOdometer} 
For any subodometer $(X,G)$ there exists an odometer $(\envelopingOdometer{X},G)$ that is an extension of $(X,G)$ and such that all odometers that are extensions of $(X,G)$ are also extensions of $(\envelopingOdometer{X},G)$. 
This odometer is unique up to conjugacy and called the \emph{enveloping odometer}. It is conjugated to the action of $G$ on the Ellis-semigroup.  
For details see \cite[Section~9]{cortez2026minimal} and \cite[Chapter~3]{auslander1988minimal}.

\section{Regular recurrence}
\label{sec:RegularRecurrence}
A point $x\in X$ is called \emph{regularly recurrent} if for any open neighbourhood $U$ of $x$ there exists a finite index subgroup $\Gamma\leq G$ such that $\Gamma.x\subseteq U$. 
Note that by restricting to the core we can always choose $\Gamma$ to be normal. 
An action $(X,G)$ is called \emph{pointwise regularly recurrent} if all $x\in X$ are regularly recurrent. 

\subsection{Almost periodicity}
\label{subsec:RegularRecurrence_almostPeriodicity}
We next summarize some aspects of almost periodicity. For further details see \cite{auslander1988minimal}. 
A subset $S\subseteq G$ is called \emph{syndetic} if there exists a finite set $F\subseteq G$ with $FS=G$. 
Let $(X,G)$ be an action. 
A point $x\in X$ is called \emph{almost periodic} if for any open neighbourhood $U$ of $x$ there exists a syndetic subset $S\subseteq G$ such that $S.x\subseteq U$. 
Clearly, any regularly recurrent point is almost periodic. 
A point $x\in X$ is almost periodic if and only if $(\overline{G.x},G)$ is minimal \cite[Theorem~1.7]{auslander1988minimal}. 

An action $(X,G)$ is called \emph{pointwise almost periodic} (also \emph{semi-simple}) if all $x\in X$ are almost periodic, i.e.\ if $X$ decomposes into minimal components. Any distal action is pointwise almost periodic \cite[Corollary~5.4]{auslander1988minimal}. 
An action is called \emph{uniformly almost periodic} if for all $\epsilon\in \mathbb{U}_X$ there exists a syndetic subset $S\subseteq G$ such that for all $x\in X$ we have $S.x\subseteq B_\epsilon(x)$. 
Clearly, any uniformly almost periodic action is pointwise almost periodic. 
Recall from \cite[Theorem~2.2]{auslander1988minimal} that an action is equicontinuous if and only if it is uniformly almost periodic. 

\begin{lemma}\cite[Lemma~1.12]{auslander1988minimal}
\label{lem:normalSubgroupMinimalComponents}
    Let $(X,G)$ be an action and $x\in X$. 
    Let $\Gamma\leq G$ be a normal subgroup. 
    If $x$ is an almost periodic point for $(X,G)$, then for all $g\in G$ the point $g.x$ is almost periodic for $(X,\Gamma)$.
\end{lemma}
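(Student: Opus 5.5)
The plan is the classical Auslander--Gottschalk argument: show that $\overline{\Gamma.(g.x)}$ is a minimal subset for $(X,\Gamma)$. By \cite[Theorem~1.7]{auslander1988minimal}, applied to the restricted action $(X,\Gamma)$, this is the same as $g.x$ being almost periodic for $(X,\Gamma)$. First I reduce to $g=e_G$. Since $\Gamma$ is normal, the homeomorphism $y\mapsto g.y$ conjugates the action of $\Gamma$ to itself via $\gamma\mapsto g\gamma g^{-1}$. It therefore maps $\Gamma$-orbit closures to $\Gamma$-orbit closures, namely $g.\overline{\Gamma.x}=\overline{\Gamma.(g.x)}$, and it maps $\Gamma$-minimal sets to $\Gamma$-minimal sets. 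It thus suffices to prove that $x$ itself is $\Gamma$-almost periodic. We may also replace $X$ by $Y:=\overline{G.x}$, which is $G$-minimal.

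Fix a $\Gamma$-minimal subset $M\subseteq \overline{\Gamma.x}$; it exists by Zorn's lemma. For every $h\in G$ the set $h.M$ is again $\Gamma$-minimal, by normality: $\gamma h.M=h(h^{-1}\gamma h).M=h.M$. Choose $h$ ranging over a set of coset representatives of $G/\Gamma$. Since $G.M$ is $G$-invariant, nonempty and contained in $Y$, minimality gives $\overline{G.M}=Y$. Here $G.M=\bigcup_{h} h.M$ is a union of $\Gamma$-minimal sets.

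If $G/\Gamma$ is finite, the union $\bigcup_h h.M$ is finite, hence closed and equal to $Y$. Then $x\in h.M$ for some $h$, and since $h.M$ is $\Gamma$-minimal, $x$ is $\Gamma$-almost periodic. This finite-index case is all that the paper actually needs, because regular recurrence concerns finite index subgroups.

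The main obstacle is the general case of infinite index, where the union need not be closed. Here I would follow \cite[Lemma~1.12]{auslander1988minimal} and pass to the enveloping semigroup. Let $E=E(Y,G)$ and take a minimal left ideal $I\subseteq E(Y,\Gamma)$ with an idempotent $u\in I$ such that $u.x\in M$. Almost periodicity of $x$ for $G$ gives, via \cite[Theorem~1.7]{auslander1988minimal}-type arguments, an idempotent $v$ in a minimal ideal of $E(Y,G)$ with $v.x=x$. The step to carry out is that minimal ideals of $E(Y,\Gamma)$ can be chosen inside minimal ideals of $E(Y,G)$, using normality: $E(Y,\Gamma)$ is normalized by $G$, and $Ev$ contains the $\Gamma$-ideal $E(Y,\Gamma)v$. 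From this one obtains a $\Gamma$-idempotent fixing $x$, and then $x$ lies in a $\Gamma$-minimal set. I expect this ideal-theoretic step to be the hardest, so I would just cite it, as the paper does.
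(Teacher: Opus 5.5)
Your reduction to $g=e_G$ and your finite-index argument are correct and complete. When $[G\colon\Gamma]<\infty$, the set $G.M=\bigcup_{h\in F}h.M$ is a finite union of closed $\Gamma$-minimal sets. It is therefore closed and equal to $\overline{G.x}$, and $x$ lies in one of these sets. This covers every place where the paper uses the lemma, since there $\Gamma$ is always a finite index normal subgroup (e.g.\ $\Gamma_G$ for $\Gamma\in\subfin(G)$). The paper itself gives no argument beyond the citation.

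The infinite-index paragraph, however, is a genuine gap, and it cannot be filled: the statement is false for normal subgroups of infinite index. Take a minimal non-distal $\mathbb{Z}$-action $(Z,\rho)$, e.g.\ a Sturmian subshift. Let $G=\mathbb{Z}^2$ act on $X=Z\times Z$ by $(p,q).(a,b):=(\rho^{p}a,\rho^{q}b)$. This action is minimal, so every point is $G$-almost periodic. The diagonal $\Gamma:=\{(p,p);\,p\in\mathbb{Z}\}$ is normal, since $G$ is abelian, and it acts by $\rho\times\rho$. Pick a proximal pair $(a,b)$ with $a\neq b$. Then $\overline{\Gamma.(a,b)}$ contains a point $(c,c)$, and hence the $\Gamma$-minimal set $\Delta_Z$. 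If $(a,b)$ were $\Gamma$-almost periodic, this orbit closure would be minimal and so equal to $\Delta_Z$, forcing $a=b$.

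The step you single out is also false as stated. Suppose $\Gamma$ acts trivially on a non-distal minimal $(Y,G)$, e.g.\ $(p,q).z:=\rho^{p}z$ with $\Gamma=\{0\}\times\mathbb{Z}$. Then $E(Y,\Gamma)$ consists only of the identity map. The identity lies in a minimal left ideal of $E(Y,G)$ only if $(Y,G)$ is distal, so minimal ideals of $E(Y,\Gamma)$ cannot in general be chosen inside minimal ideals of $E(Y,G)$. This fails even in a case where the conclusion of the lemma trivially holds.

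So citing the general case is not an option. The lemma needs the additional hypothesis that $\Gamma$ has finite index, equivalently that $\Gamma$ is syndetic in $G$, as in Gottschalk--Hedlund-type results. Under that hypothesis your proof is complete and self-contained.
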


\subsection{$G$-tiles}
\label{subsec:RegularRecurrence_GTiles}
Let $(X,G)$ be an action. 
An open subset $A\subseteq X$ is called a \emph{$G$-tile} if $\{g.A;\, g\in G\}$ is a partition of $X$. Note that the compactness of $X$ yields that the partition given by a $G$-tile is finite and hence that $G$-tiles are clopen. $G$-tiles are closely related to regular recurrence as illustrated by the following. 

\begin{proposition}
\label{pro:regularlyRecurrentPointsAndGTiles}
    Let $(X,G)$ be a minimal action.
    A point $x\in X$ is regularly recurrent if and only if it allows for a neighbourhood base consisting of $G$-tiles. 
\end{proposition}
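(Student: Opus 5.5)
Both implications follow from the definitions by looking at the stabilizer of a tile.

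\emph{($\Leftarrow$)} Suppose $x$ has a neighbourhood base of $G$-tiles. Let $U$ be an open neighbourhood of $x$ and choose a $G$-tile $A$ with $x\in A\subseteq U$. Set $\Gamma:=\{g\in G;\, g.A=A\}$, the stabilizer of $A$ under the induced action on subsets.
\begin{itemize}
\item The map $g\mapsto g.A$ induces a bijection between $G/\Gamma$ and the partition $\{g.A;\, g\in G\}$. That partition is finite by compactness, as noted above, so $\Gamma\in\subfin(G)$.
\item For $\gamma\in\Gamma$ we get $\gamma.x\in\gamma.A=A\subseteq U$, so $\Gamma.x\subseteq U$.
\end{itemize}
Since $U$ was arbitrary, $x$ is regularly recurrent. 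Minimality is not needed in this direction.

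\emph{($\Rightarrow$)} Let $x$ be regularly recurrent and $U$ an open neighbourhood of $x$. The plan is to take a normal $\Gamma\in\subfin(G)$ with $\Gamma.x$ deep inside $U$, and then cut out a tile from the closed sets $g.Y$, where $Y:=\overline{\Gamma.x}$.

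\emph{Step 1: $Y$ is $\Gamma$-minimal.} By minimality $x$ is almost periodic for $(X,G)$. Lemma~\ref{lem:normalSubgroupMinimalComponents} then gives that every $g.x$ is almost periodic for $(X,\Gamma)$. Hence each $g.Y=\overline{\Gamma g.x}$ (using normality) is a minimal $\Gamma$-set. Two minimal $\Gamma$-sets either coincide or are disjoint. With a finite fundamental domain $F$ of $\Gamma$, the family $\{g.Y;\, g\in G\}=\{h.Y;\, h\in F\}$ is a finite family of closed sets. It covers $X$, because $G.x=F\Gamma.x$ is dense and the finite union of the $h.Y$ is closed. Any two members are equal or disjoint. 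Therefore they form a finite partition of $X$ into closed, hence clopen, sets, and $Y$ is a $G$-tile.

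\emph{Step 2: $Y\subseteq U$.} This is the main obstacle: regular recurrence only gives $\Gamma.x\subseteq U$, while we need the closure $\overline{\Gamma.x}$ inside $U$. I resolve it by applying regular recurrence to a smaller neighbourhood. By regularity of the compact Hausdorff space $X$, choose an open $V$ with $x\in V\subseteq\overline{V}\subseteq U$. Pick $\Gamma$ with $\Gamma.x\subseteq V$, replacing $\Gamma$ by its normal core so that $\Gamma$ is normal in $G$. Then $Y=\overline{\Gamma.x}\subseteq\overline V\subseteq U$.

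\emph{Step 3: $Y$ is a neighbourhood of $x$.} Since $Y$ is clopen by Step~1 and contains $x$, it is an open neighbourhood of $x$ inside $U$. As $U$ was arbitrary, the $G$-tiles form a neighbourhood base of $x$.
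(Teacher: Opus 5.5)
Your proof is correct and follows the paper's argument essentially step for step. For ($\Rightarrow$) the paper likewise shrinks $U$ to a closed neighbourhood, takes a normal finite index $\Gamma$ with $\Gamma.x\subseteq V$, and uses Lemma~\ref{lem:normalSubgroupMinimalComponents} together with a fundamental domain to show that $\overline{\Gamma.x}$ is a $G$-tile; for ($\Leftarrow$) it passes to the finite quotient $X/R$ with $R=\bigcup_{g\in G} g.A^2$ and uses $G_0(\pi(x))$, which is exactly the stabilizer of $A$ that you use directly.
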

\begin{proof}
'$\Rightarrow$': 
    Let $U$ be an open neighbourhood of $x$. 
    Choose an open neighbourhood $V$ and a closed neighbourhood $C$ of $x$ such that $x\in V\subseteq C \subseteq U$. 
    Since $x$ is regularly recurrent there exists a normal finite index subgroup $\Gamma\leq G$ with $\Gamma.x\subseteq V$. 
    In particular, we observe that $A:=\overline{\Gamma.x}\subseteq C\subseteq U$. 
    It remains to show that $A$ is a $G$-tile. 
    Note that $A$ is closed. Thus to establish $A$ as a $G$-tile it suffices to show that $\{g.A;\, g\in G\}$ is a finite partition. 

    We first show that $\{g.A;\, g\in G\}$ is a partition.     
    Since $x\in X$ is regularly recurrent it is almost periodic and hence for any $g\in G$ we have that $g.x$ is an almost periodic point for $(X,\Gamma)$. 
    It follows from 
    $
        \overline{\Gamma g.x}
        =\overline{g\Gamma.x}
        =g.\overline{\Gamma.x}
        = g.A
    $
    that $(g.A,\Gamma)$ is minimal. Since $(X,G)$ is minimal we observe $\{g.A;\, g\in G\}$ to be the minimal components of $X$. In particular, $\{g.A;\, g\in G\}$ is a partition. 
    To show that it is finite consider a fundamental domain $F$ for $\Gamma$. 
    We have 
    \begin{align*}
        \bigcup_{g\in F}g.A
        = \bigcup_{g\in F}g.\overline{\Gamma.x}
        = \bigcup_{g\in F}\overline{g\Gamma.x}
        = \overline{\bigcup_{g\in F}g\Gamma.x}
        = \overline{F\Gamma.x}
        = \overline{G.x}
        =X,
    \end{align*}
    which establishes $\{g.A;\, g\in G\}$ as a finite partition. 

'$\Leftarrow$':
    Let $U$ be an open neighbourhood of $x$. 
    Since $x$ allows for a neighbourhood base of $G$-tiles we find a $G$-tile $A$ with $x\in A\subseteq U$. 
    Denote $R:=\bigcup_{g\in G}g.A^2$ for the respective equivalence relation and note that $R$ is an invariant closed equivalence relation and that $Y:=X/R$ is finite. Denote $\pi\colon X\to Y$ for the respective factor map and note that $\Gamma:=G_0(\pi(x))$ is a finite index subgroup of $G$. 
    For $g\in \Gamma$ we observe that $\pi(g.x)=\pi(x)$ and hence that $g.x\in A\subseteq U$. This shows $\Gamma.x\subseteq U$.
\end{proof}

Since $G$-tiles are clopen we observe that only Stone spaces allow for minimal pointwise regularly recurrent actions. 
From \cite[Theorem~3.6]{cortez2026minimal} we recall the following. 

\begin{proposition} \cite[Theorem~3.6]{cortez2026minimal}
\label{pro:characterizationGTiles} 
    A minimal action $(X,G)$ is a subodometer if and only if the topology of $X$ allows for a base of $G$-tiles.  
\end{proposition}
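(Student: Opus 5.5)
For the forward direction I would use Proposition~\ref{pro:characterizationEquicontinuousStone} to obtain a base of $\mathbb{U}_X$ made of invariant equivalence relations, and show that the classes of these relations are $G$-tiles. For the converse I would use tiles to build finitely many invariant clopen equivalence relations whose intersection lies inside a given entourage, and then apply the same proposition again.

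\emph{Subodometer $\Rightarrow$ base of $G$-tiles.}
\begin{enumerate}[label=(\arabic*)]
\item By Proposition~\ref{pro:characterizationEquicontinuousStone}, $\mathbb{U}_X$ has a base $\mathbb{B}$ of invariant equivalence relations.
\item Fix $R\in\mathbb{B}$ and $x\in X$. The class $R[x]$ is open. Indeed, $R$ is a neighbourhood of the diagonal, so each $y\in R[x]$ has a neighbourhood $R[y]$, and $R[y]=R[x]$.
\item The classes of $R$ are therefore open and partition $X$. By compactness there are only finitely many, and they are also closed.
\item Invariance of $R$ gives $g.R[x]=R[g.x]$, so the translates of $R[x]$ are classes of $R$.
\item The set $\bigcup_{g\in G} g.R[x]$ is nonempty, open and invariant. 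Its complement is closed and invariant, so by minimality it is empty, and the union is all of $X$.
\item Hence $\{g.R[x];\, g\in G\}$ is exactly the partition into $R$-classes, and $R[x]$ is a $G$-tile.
\item Given an open $U\ni x$, choose $R\in\mathbb{B}$ with $B_R(x)=R[x]\subseteq U$. This is possible because $\{B_\epsilon(x);\,\epsilon\in\mathbb{B}\}$ is a neighbourhood base of $x$. So the $G$-tiles form a base for the topology.
\end{enumerate}

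\emph{Base of $G$-tiles $\Rightarrow$ subodometer.}
\begin{enumerate}[label=(\arabic*)]
\item Tiles are clopen, so $X$ is zero-dimensional and hence a Stone space.
\item For a tile $A$, set $R_A:=\bigcup_{g\in G} g.A^2$. This is an invariant equivalence relation with finitely many clopen classes, so it is clopen and belongs to $\mathbb{U}_X$.
\item Let $\epsilon\in\mathbb{U}_X$ and pick a symmetric $\delta\in\mathbb{U}_X$ with $\delta\delta\subseteq\epsilon$. For each $x$ choose a tile $A_x$ with $x\in A_x\subseteq B_\delta(x)$. By compactness, finitely many of these, say $A_{x_1},\dots,A_{x_n}$, cover $X$.
\item Put $R:=\bigcap_i R_{A_{x_i}}$. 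This is again an invariant clopen equivalence relation.
\item To see $R\subseteq\epsilon$, take $(y,z)\in R$ and pick $i$ with $y\in A_{x_i}$. The translates of $A_{x_i}$ partition $X$, so the $R_{A_{x_i}}$-class of $y$ is $A_{x_i}$ itself, and therefore $z\in A_{x_i}$. Both $y$ and $z$ then lie in $B_\delta(x_i)$, which gives $(y,z)\in\delta\delta\subseteq\epsilon$ by symmetry of $\delta$.
\item So the invariant equivalence relations form a base of $\mathbb{U}_X$, and Proposition~\ref{pro:characterizationEquicontinuousStone} shows that $(X,G)$ is a subodometer.
\end{enumerate}

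The step needing the most care is the containment $R\subseteq\epsilon$. A single tile relation $R_A$ only controls pairs lying near the one point $x$ at which $A$ was chosen. One needs the finite intersection, together with the observation that the $R_{A_{x_i}}$-class of a point of $A_{x_i}$ is $A_{x_i}$ itself, to obtain uniform control over all of $X$.
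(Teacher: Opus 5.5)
Your proof is correct. The paper does not prove this statement itself; it imports it as \cite[Theorem~3.6]{cortez2026minimal}, so there is no in-text argument to match against. Your reduction to Proposition~\ref{pro:characterizationEquicontinuousStone} is sound in both directions. Your tile relation $R_A=\bigcup_{g\in G}g.A^2$ is exactly the relation the paper uses in the proof of Proposition~\ref{pro:regularlyRecurrentPointsAndGTiles}.

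It is worth being clear about where the content of your argument actually sits.

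Your converse does not need Proposition~\ref{pro:characterizationEquicontinuousStone}. Once you have an invariant $R\in\mathbb{U}_X$ with $R\subseteq\epsilon$, every $(y,z)\in R$ satisfies $(g.y,g.z)\in R\subseteq\epsilon$ for all $g\in G$. That is equicontinuity directly, and your step (1) supplies the Stone property. So that half is self-contained, and step (1) becomes redundant if you do invoke the proposition.

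Your forward direction, by contrast, only repackages Proposition~\ref{pro:characterizationEquicontinuousStone}. The nontrivial fact is that an equicontinuous action on a Stone space has arbitrarily fine invariant clopen equivalence relations, and that is carried entirely by the cited result. This is legitimate, since the proposition is stated earlier in the paper. If you want the argument to stand on its own, you can build those relations as in step (v)$\Rightarrow$(vi) of Theorem~\ref{the:characterizationDisjointnessToSubodometers}:
\begin{itemize}
\item Take the relation $\rho$ of a finite clopen partition with $\rho\subseteq\epsilon$; such a partition exists by compactness and zero-dimensionality.
\item Choose an invariant entourage $\delta\subseteq\rho$.
\item Put $R:=\bigcap_{g\in G}g.\rho$. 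This is an invariant closed equivalence relation containing $\delta$, hence an entourage, so its classes are open and therefore finitely many and clopen.
\end{itemize}
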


\begin{corollary}
\label{cor:characterizationSubodometersViaRegularRecurrence}
    An action is pointwise regularly recurrent if and only if it decomposes into a disjoint union of subodometers. 
    In particular, a minimal action is a subodometer if and only if it is pointwise regularly recurrent. 
\end{corollary}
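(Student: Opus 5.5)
We have to prove Corollary~\ref{cor:characterizationSubodometersViaRegularRecurrence}: an action $(X,G)$ is pointwise regularly recurrent if and only if $X$ decomposes into a disjoint union of subodometers. The plan is to reduce both directions to the minimal case and apply Propositions~\ref{pro:regularlyRecurrentPointsAndGTiles} and~\ref{pro:characterizationGTiles}. One preliminary fact is used throughout. If $M\subseteq X$ is a closed invariant subset and $x\in M$, then $x$ is regularly recurrent for $(X,G)$ if and only if it is regularly recurrent for $(M,G)$. This holds because the neighbourhoods of $x$ in $M$ are exactly the traces $U\cap M$, and $\Gamma.x\subseteq M$ always.

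\emph{The minimal case.} Let $(X,G)$ be minimal. If $(X,G)$ is a subodometer, Proposition~\ref{pro:characterizationGTiles} gives a base for the topology consisting of $G$-tiles. Hence every point has a neighbourhood base of $G$-tiles, and Proposition~\ref{pro:regularlyRecurrentPointsAndGTiles} shows that every point is regularly recurrent. Conversely, suppose every point is regularly recurrent. By Proposition~\ref{pro:regularlyRecurrentPointsAndGTiles} each point has a neighbourhood base of $G$-tiles. The union of these bases is then a base of $G$-tiles for the topology, and Proposition~\ref{pro:characterizationGTiles} shows that $(X,G)$ is a subodometer. This settles the ``in particular'' clause.

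\emph{The general case.} Suppose $(X,G)$ is pointwise regularly recurrent. Every regularly recurrent point is almost periodic, so $X$ is partitioned into the minimal sets $\overline{G.x}$. By the preliminary fact, each such minimal subaction is again pointwise regularly recurrent, so by the minimal case it is a subodometer. Conversely, suppose $X$ is a disjoint union of minimal invariant closed subsets, each carrying a subodometer. A given $x$ lies in one component $M$, and by the minimal case $x$ is regularly recurrent in $(M,G)$. By the preliminary fact it is then regularly recurrent in $(X,G)$.

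The only point needing care is the meaning of ``decomposes'': it should mean that $X$ is a union of disjoint closed invariant sets whose subactions are subodometers, with no topological requirement that the union be clopen. With this reading the argument above is complete, and the main work is already contained in the two cited propositions; I expect no real obstacle.
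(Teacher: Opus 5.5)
Your proof is correct and follows the paper's argument: pass to minimal components via almost periodicity, then combine Propositions~\ref{pro:regularlyRecurrentPointsAndGTiles} and~\ref{pro:characterizationGTiles}. You also make explicit that regular recurrence is unaffected by restricting to a closed invariant subset, a step the paper leaves implicit, and you cite the two propositions in the logically correct order, whereas the paper's write-up has the two citations interchanged.
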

\begin{proof}
    If $(X,G)$ is pointwise regularly recurrent, then it is pointwise almost periodic and hence decomposes into minimal components. 
    Such a minimal component $Y\subseteq X$ is then pointwise regularly recurrent and Proposition \ref{pro:characterizationGTiles} yields that the topology of $Y$ has a base consisting of $G$-tiles. 
    From Proposition \ref{pro:regularlyRecurrentPointsAndGTiles} we observe that $(Y,G)$ is a subodometer. 

    For the converse assume that $X$ decomposes into subodometers. 
    By Proposition \ref{pro:regularlyRecurrentPointsAndGTiles} any minimal component has a base consisting of $G$-tiles and Proposition \ref{pro:characterizationGTiles} yields that it consists of regularly recurrent points. Thus, $X$ consists of regularly recurrent points. 
\end{proof}

\subsection{Uniform regular recurrence}
\label{subsec:RegularRecurrence_UniformRegularRecurrence}
An action $(X,G)$ is called \emph{uniformly regularly recurrent} if for any $\epsilon\in \mathbb{U}_X$ there exists a finite index subgroup $\Gamma\leq G$ such that for all $x\in X$ we have $\Gamma.x\subseteq B_\epsilon(x)$. 

\begin{example}
    A rational rotation on the circle is uniformly regularly recurrent.
\end{example}

Clearly, any uniformly regularly recurrent action is pointwise regularly recurrent, and uniformly almost periodic, i.e.\ equicontinuous. We next show that also the converse holds. 

\begin{proposition}
\label{pro:uniformRegularRecurrenceVSPointwiseRegularRecurrence}
    An action $(X,G)$ is uniformly regularly recurrent if and only if it is pointwise regularly recurrent and equicontinuous. 
\end{proposition}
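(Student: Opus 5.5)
The forward direction is immediate. Uniform regular recurrence gives pointwise regular recurrence directly from the definitions. It also gives uniform almost periodicity, since finite index subgroups are syndetic, and uniform almost periodicity is equivalent to equicontinuity by \cite[Theorem~2.2]{auslander1988minimal}.

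For the converse, assume $(X,G)$ is pointwise regularly recurrent and equicontinuous, and fix $\epsilon\in\mathbb{U}_X$.

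First, by equicontinuity there is an invariant entourage inside $\epsilon$; intersecting it with its inverse, we may take it symmetric. Picking $\delta\in\mathbb{U}_X$ symmetric with $\delta\delta\subseteq\epsilon$, and then an invariant symmetric $\delta'\subseteq\delta$, we may assume we have an invariant symmetric $\delta$ with $\delta\delta\subseteq\epsilon$.

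Second, use regular recurrence pointwise. For each $x\in X$ choose a normal $\Gamma_x\in\subfin(G)$ with $\Gamma_x.x\subseteq B_\delta(x)$. Compactness would let us cover $X$ by finitely many sets $B_\delta(x_1),\dots,B_\delta(x_n)$, and we would set $\Gamma:=\bigcap_i\Gamma_{x_i}$, which is still normal and of finite index.

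Third, we need $\Gamma.y\subseteq B_\epsilon(y)$ for every $y$. Take $y\in B_\delta(x_i)$, so that $(y,x_i)\in\delta$, and let $\gamma\in\Gamma$. Invariance of $\delta$ gives $(\gamma.y,\gamma.x_i)\in\delta$, and we know $(\gamma.x_i,x_i)\in\delta$. Combining these with $(x_i,y)\in\delta$ yields a chain $\gamma.y\to\gamma.x_i\to x_i\to y$ of three $\delta$-steps. So we actually need $\delta\delta\delta\subseteq\epsilon$, which is arranged by choosing $\delta$ a little smaller at the start. This finishes the proof.

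There is no real obstacle. The only care points are:
\begin{itemize}
\item the direction convention $B_\delta(x)=\delta[x]=\{x';(x',x)\in\delta\}$, which is harmless because $\delta$ is symmetric;
\item arranging invariance and symmetry at the same time, which works since $\delta\cap\delta^{-1}$ of an invariant entourage is invariant;
\item the non-metrizable setting, which causes no trouble because only finite covers and entourage calculus are used.
\end{itemize}
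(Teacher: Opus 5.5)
Your proposal is correct and follows the paper's proof essentially step for step. The forward direction goes through syndeticity of finite index subgroups and \cite[Theorem~2.2]{auslander1988minimal}. The converse takes an invariant symmetric $\delta$ with $\delta\delta\delta\subseteq\epsilon$, a finite $\delta$-cover, the intersection of the finitely many witnessing subgroups, and the same three-step chain $(\gamma.y,\gamma.x_i),(\gamma.x_i,x_i),(x_i,y)\in\delta$.
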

\begin{proof}
    It remains to show that any pointwise regularly recurrent and equicontinuous action is uniformly regularly recurrent. 
    Let $\epsilon\in \mathbb{U}_X$. 
    Since $(X,G)$ is equicontinuous there exists an invariant and symmetric $\delta\in \mathbb{U}_X$ with $\delta\delta\delta\subseteq \epsilon$. 
    Since $X$ is compact we find a finite subset $F\subseteq X$ with $X=\bigcup_{x\in F}B_\delta(x)$. 
    For $x\in F$ there exists a finite index subgroup $\Gamma_x$ such that $\Gamma_x.x\subseteq B_\delta(x)$. 
    For the finite index subgroup $\Gamma:=\bigcap_{x\in F}\Gamma_x$ we observe that for all $x\in F$ we have $\Gamma.x\subseteq B_\delta(x)$. 

    Let $x\in X$ and $g\in \Gamma$. 
    Choose $x'\in F$ with $x\in B_\delta(x')$, i.e.\ $(x',x)\in \delta$. 
    From $g.x'\in \Gamma.x'\subseteq B_\delta(x')$ we know that $(g.x',x')\in \delta$. 
    Since $\delta$ is invariant we have $(g.x,g.x')\in \delta$. 
    Combining these observations we observe 
    $(g.x,x)\in \delta\delta\delta\subseteq \epsilon$, i.e.\ $g.x\in B_\epsilon(x)$. This shows $\Gamma.x\subseteq B_\epsilon(x)$ for all $x\in X$. 
\end{proof}

Recall from Corollary \ref{cor:characterizationSubodometersViaRegularRecurrence} that a minimal action is a subodometer if and only if it is pointwise regularly recurrent. 
Since subodometers are equicontinuous we conclude that for minimal actions pointwise and uniform regular recurrence are equivalent. 

\begin{corollary}
\label{cor:subodometersUniformlyRegularlyRecurrent}
    A minimal action is a subodometer if and only if it is uniformly regularly recurrent. 
\end{corollary}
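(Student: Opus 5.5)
Both directions follow by combining Corollary~\ref{cor:characterizationSubodometersViaRegularRecurrence} with Proposition~\ref{pro:uniformRegularRecurrenceVSPointwiseRegularRecurrence}; the only thing to supply is equicontinuity in the forward direction.

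Suppose $(X,G)$ is a subodometer. By definition it is minimal and equicontinuous on a Stone space. By Corollary~\ref{cor:characterizationSubodometersViaRegularRecurrence} a minimal subodometer is pointwise regularly recurrent. Proposition~\ref{pro:uniformRegularRecurrenceVSPointwiseRegularRecurrence} then shows that $(X,G)$ is uniformly regularly recurrent.

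Conversely, suppose $(X,G)$ is minimal and uniformly regularly recurrent. For each $x\in X$ and each open neighbourhood $U$ of $x$, choose $\epsilon\in\mathbb{U}_X$ with $B_\epsilon(x)\subseteq U$. Uniform regular recurrence gives $\Gamma\in\subfin(G)$ with $\Gamma.x\subseteq B_\epsilon(x)\subseteq U$. Hence every point is regularly recurrent, so $(X,G)$ is pointwise regularly recurrent. Since it is also minimal, Corollary~\ref{cor:characterizationSubodometersViaRegularRecurrence} shows that $(X,G)$ is a subodometer.

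No step is a real obstacle. We could also note directly that uniform regular recurrence implies uniform almost periodicity, because finite index subgroups are syndetic, and hence equicontinuity; but the converse direction does not need this.
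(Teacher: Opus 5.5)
Your proof is correct and follows the paper's own argument. The forward direction combines pointwise regular recurrence of subodometers (Corollary~\ref{cor:characterizationSubodometersViaRegularRecurrence}) with their equicontinuity via Proposition~\ref{pro:uniformRegularRecurrenceVSPointwiseRegularRecurrence}. The converse uses that uniform regular recurrence trivially implies pointwise regular recurrence, and then Corollary~\ref{cor:characterizationSubodometersViaRegularRecurrence} applies to minimal actions.
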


The following example illustrates that beyond minimality a pointwise regularly recurrent action does not need to be uniformly regularly recurrent. 

\begin{example}
    In this example we denote $\overline{\mathbb{N}}:=\mathbb{N}\cup \{\infty\}$ and use the convention $1/\infty:=0$. 
    For $n\in \overline{\mathbb{N}}$ we denote 
    $X_{n}:=\{z\in \mathbb{C};\, |z|=1+1/n\}$ for the circle centered at $0$ with radius $1+1/n$.
    On $X_n$ we consider the rotation by $1/n$, i.e.\ the action of $\mathbb{Z}$ given by 
    $g.x:=e^{2 \pi i g/n}x$. Note that $(X_n,\mathbb{Z})$ consists of regularly recurrent points. 
    Furthermore, note that $(X_\infty,\mathbb{Z})$ consists of fixed points. 

    Denote $X:=\bigcup_{n\in \overline{\mathbb{N}}}X_n$. 
    Note that for $n\to \infty$ the rotations $X_n$ slow down sufficiently. 
    Thus, we obtain an action $(X,\mathbb{Z})$. 
    Clearly, $(X,\mathbb{Z})$ consists of regularly recurrent points. 
    To observe that $(X,\mathbb{Z})$ is not equicontinuous consider $\delta>0$. There exists $n\in 2\mathbb{N}$ with $1/n<\delta$.
    Denote $x:=1$ and $x':=1+1/n$ and note that $d(x,x')<\delta$. 
    Nevertheless, for $g:=n/2$ we have $d(g.x,g.x')=d(1,-1-1/n)>1$. 
    Thus, $(X,\mathbb{Z})$ is not equicontinuous. 
    Since any uniformly regularly recurrent action is uniformly almost periodic, i.e.\ equicontinuous, we observe that $(X,\mathbb{Z})$ is not uniformly regularly recurrent. 
\end{example}

As witnessed by irrational rotations on the circle equicontinuity does not necessarily imply the existence of regularly recurrent points. 
For actions on Stone spaces we have the following. 

\begin{corollary}
\label{cor:uniformRegularRecurrenceAndStoneSpaces}
    An action $(X,G)$ on a Stone space is equicontinuous if and only if it is uniformly regularly recurrent. 
\end{corollary}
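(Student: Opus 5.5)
The backward direction needs no work: a uniformly regularly recurrent action is uniformly almost periodic, hence equicontinuous, as observed after the definition of uniform regular recurrence. For the forward direction, let $(X,G)$ be an equicontinuous action on a Stone space. By Proposition \ref{pro:uniformRegularRecurrenceVSPointwiseRegularRecurrence} it suffices to show that $(X,G)$ is pointwise regularly recurrent. By Corollary \ref{cor:characterizationSubodometersViaRegularRecurrence}, this in turn is equivalent to showing that $X$ decomposes into a disjoint union of subodometers.

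I will carry out the decomposition as follows. An equicontinuous action is uniformly almost periodic, hence pointwise almost periodic. Therefore $X$ is the disjoint union of its minimal components $Y=\overline{G.x}$. Each such $Y$ is a closed nonempty subset of a Stone space, so it is itself a Stone space. The subaction $(Y,G)$ is minimal, and it is equicontinuous because the restriction of a base of invariant entourages of $X$ to $Y^2$ is a base of invariant entourages for $\mathbb{U}_Y$. Hence each minimal component is a subodometer. Corollary \ref{cor:characterizationSubodometersViaRegularRecurrence} then gives that $(X,G)$ is pointwise regularly recurrent. Proposition \ref{pro:uniformRegularRecurrenceVSPointwiseRegularRecurrence}, together with equicontinuity, yields uniform regular recurrence.

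None of these steps is a real obstacle. The only point needing care is that the uniformity of a closed subspace of a compact Hausdorff space is the restriction of the ambient uniformity, so that equicontinuity passes to the subaction. This holds because compact Hausdorff spaces carry a unique compatible uniformity.
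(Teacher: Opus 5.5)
Your proposal is correct and follows the paper's proof essentially verbatim. Both arguments decompose $X$ into minimal components via pointwise almost periodicity, recognize each component as a subodometer, obtain pointwise regular recurrence from Corollary \ref{cor:characterizationSubodometersViaRegularRecurrence}, and upgrade to uniform regular recurrence via Proposition \ref{pro:uniformRegularRecurrenceVSPointwiseRegularRecurrence}; your remark that equicontinuity passes to closed invariant subsets fills in a detail the paper leaves implicit.
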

\begin{proof}
    It remains to show that equicontinuous actions on Stone spaces are uniformly regularly recurrent. 
    Any equicontinuous action is pointwise almost periodic and hence decomposes into minimal components. These components are minimal equicontinuous actions on Stone spaces, i.e.\ subodometers and Corollary \ref{cor:characterizationSubodometersViaRegularRecurrence} yields that they consist of regularly recurrent points. This shows that $(X,G)$ is pointwise regularly recurrent. Since $(X,G)$ is equicontinuous Proposition \ref{pro:uniformRegularRecurrenceVSPointwiseRegularRecurrence} yields that $(X,G)$ is uniformly regularly recurrent.
\end{proof}

\subsection{Uniform regular recurrence of induced actions}
\label{subsec:RegularRecurrence_UniformRegularRecurrenceOfInducedActions}
Next, we show that uniform regular recurrence of an action $(X,G)$ is inherited by the induced actions $(\hyper(X),G)$ and $(\mathcal{M}(X),G)$.

\begin{lemma}
\label{lem:regularRecurrenceInCX}
    Let $(X,G)$ be a uniformly regularly recurrent action and $\epsilon>0$.
    For $f\in C(X)$ there exists a finite index subgroup $\Gamma\leq G$ such that for all $g\in \Gamma$ we have 
    $\|f-g^*f\|_\infty\leq \epsilon.$
\end{lemma}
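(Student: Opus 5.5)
The plan is to use the uniform continuity of $f$ on the compact space $X$. We turn the real number $\epsilon$ into an entourage and then apply uniform regular recurrence to that entourage.

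Since $X$ is compact Hausdorff and $f\in C(X)$, the set
\[\delta:=\{(x,x')\in X^2;\, |f(x)-f(x')|<\epsilon\}\]
is an open subset of $X^2$ containing the diagonal $\Delta_X$. It is open because it is the preimage of an open set under the continuous map $(x,x')\mapsto |f(x)-f(x')|$. Hence $\delta\in\mathbb{U}_X$.

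By uniform regular recurrence of $(X,G)$ there exists a finite index subgroup $\Gamma\leq G$ with $\Gamma.x\subseteq B_\delta(x)$ for all $x\in X$. Spelled out, this means $(g.x,x)\in\delta$ for all $g\in\Gamma$ and $x\in X$. By definition of $\delta$ this reads $|f(g.x)-f(x)|<\epsilon$.

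Since $g^*f(x)=f(g.x)$, taking the supremum over $x\in X$ gives $\|f-g^*f\|_\infty\leq\epsilon$ for all $g\in\Gamma$, which is the claim.

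There is no real obstacle here. The only point to check is that $\delta$ is a genuine entourage, which follows from the openness noted above and needs no metrizability. The convention $B_\delta(x)=\delta[x]=\{x';\,(x',x)\in\delta\}$ causes no trouble, because $\delta$ is symmetric.
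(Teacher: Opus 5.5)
Your proposal is correct and follows essentially the same route as the paper. The paper obtains the entourage from the uniform continuity of $f$, whereas you write it down explicitly as $\{(x,x');\,|f(x)-f(x')|<\epsilon\}$; since $\mathbb{U}_X$ consists of all neighbourhoods of the diagonal, the two are the same step, and the application of uniform regular recurrence is then identical.
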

\begin{proof}
    Since $X$ is compact $f$ is uniformly continuous and hence there exists $\eta\in \mathbb{U}_X$ such that for $(x,x')\in \eta$ we have 
    $|f(x)-f(x')|<\epsilon$. 
    Choose a finite index subgroup $\Gamma\leq G$ such that $g.x\in B_\eta(x)$ holds for all $(g,x)\in \Gamma\times X$. 
    
    Consider $g\in \Gamma$. 
    For $x\in X$ we have $(g.x,x)\in \eta$ and hence 
    \begin{align*}
        |f(x)-g^*f(x)|
        =|f(x)-f(g.x)|
        <\epsilon. 
    \end{align*} 
    We thus observe $\|f-g^*f\|_\infty \leq \epsilon$. 
\end{proof}

\begin{theorem}
\label{the:HMinheritanceUniformRegularRecurrence}
    For an action $(X,G)$ the following statements are equivalent. 
    \begin{itemize}
        \item[(i)] $(X,G)$ is uniformly regularly recurrent. 
        \item[(ii)] $(\hyper(X),G)$ is uniformly regularly recurrent. 
        \item[(iii)] $(\mathcal{M}(X),G)$ is uniformly regularly recurrent.
    \end{itemize}
\end{theorem}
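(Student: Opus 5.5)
We prove the cycle (i)$\Rightarrow$(ii), (i)$\Rightarrow$(iii), and then (ii)$\Rightarrow$(i) and (iii)$\Rightarrow$(i) via the embeddings of $X$ into $\hyper(X)$ and $\mathcal{M}(X)$.

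For (i)$\Rightarrow$(ii), it suffices to check the defining condition on the base $\{\eta_\epsilon;\,\epsilon\in\mathbb{U}_X\text{ symmetric}\}$ of $\mathbb{U}_{\hyper(X)}$. Given such an $\epsilon$, uniform regular recurrence of $X$ yields $\Gamma\in\subfin(G)$ with $(g.x,x)\in\epsilon$ for all $g\in\Gamma$ and $x\in X$; by symmetry also $(x,g.x)\in\epsilon$. Now let $A\in\hyper(X)$ and $g\in\Gamma$. Every point $g.x\in g.A$ lies in $B_\epsilon(x)\subseteq B_\epsilon(A)$. Conversely, every $x\in A$ lies in $B_\epsilon(g.x)\subseteq B_\epsilon(g.A)$. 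Hence $(g.A,A)\in\eta_\epsilon$, i.e.\ $\Gamma.A\subseteq B_{\eta_\epsilon}(A)$. This single $\Gamma$ works for all $A$, which gives (ii).

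For (i)$\Rightarrow$(iii), we use the base $\eta_{(\mathfrak{f},\epsilon)}$ of $\mathbb{U}_{\mathcal{M}(X)}$. For each of the finitely many $f\in\mathfrak{f}$, Lemma \ref{lem:regularRecurrenceInCX} gives $\Gamma_f\in\subfin(G)$ with $\|f-g^*f\|_\infty\le\epsilon$ for $g\in\Gamma_f$. Put $\Gamma:=\bigcap_{f\in\mathfrak{f}}\Gamma_f$, which is again of finite index. For $\mu\in\mathcal{M}(X)$ and $g\in\Gamma$ we get
\[|g.\mu(f)-\mu(f)|=|\mu(g^*f-f)|\le\|g^*f-f\|_\infty\le\epsilon,\]
so $(g.\mu,\mu)\in\eta_{(\mathfrak{f},\epsilon)}$ uniformly in $\mu$.

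For the converses, note that $X$ embeds equivariantly as a closed invariant subspace: via $x\mapsto\{x\}$ onto $\finite_1(X)\subseteq\hyper(X)$, and via $x\mapsto\delta_x$ into $\mathcal{M}(X)$. The latter map is continuous and injective by Urysohn, hence a homeomorphism onto its image by compactness. Uniform regular recurrence passes to closed invariant subsets, because the subspace uniformity is the restriction of the ambient one: every entourage of the subspace contains the trace of an ambient entourage. It is also preserved under conjugacy. Hence (ii)$\Rightarrow$(i) and (iii)$\Rightarrow$(i).

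The only point needing care is this identification of uniformities on compact subspaces and the fact that $x\mapsto\delta_x$ is an embedding. Both are standard, so no step is a real obstacle; the bulk of the work is the $\eta_\epsilon$ verification above.
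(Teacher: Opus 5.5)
Your proof is correct and follows essentially the same route as the paper. You reduce the converses to the equivariant embeddings $x\mapsto\{x\}$ and $x\mapsto\delta_x$, treat the hyperspace case via the base $\eta_\epsilon$ with a symmetric $\epsilon$, and treat the measure case via Lemma \ref{lem:regularRecurrenceInCX} and the finite intersection $\bigcap_{f\in\mathfrak{f}}\Gamma_f$. Your added justification, that uniform regular recurrence passes to closed invariant subspaces because ambient entourages restrict to a base, fills in a step the paper only asserts.
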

\begin{proof}
    Note that $(X,G)$ can be identified as a subaction of $(\hyper(X),G)$ via $x\mapsto \{x\}$. Furthermore, it can be identified as a subaction of $(\mathcal{M}(X),G)$ via $x\mapsto \delta_x$. 
    Since subactions of uniformly regularly recurrent actions are uniformly regularly recurrent it suffices to show that for a uniformly regularly recurrent action $(X,G)$ also $(\hyper(X),G)$ and $(\mathcal{M}(X),G)$ are uniformly regularly recurrent. 

'$\hyper(X)$':
    Consider $\eta\in \mathbb{U}_{\hyper(X)}$. 
    There exists\footnote{
        Recall that we define 
        $\eta_\epsilon
        :=\{(A,A')\in \hyper(X)^2;\, A\subseteq B_\epsilon(A'), A'\subseteq B_\epsilon(A)\}$ for symmetric $\epsilon\in \mathbb{U}_X$ and that sets of this form establish a base of $\mathbb{U}_{\hyper(X)}$.
    } a symmetric $\epsilon\in \mathbb{U}_X$ with 
    $\eta_\epsilon\subseteq \eta$. 
    Since $(X,G)$ is uniformly regularly recurrent there exists a finite index subgroup $\Gamma\leq G$ with $\Gamma.x\subseteq B_\epsilon(x)$ for all $x\in X$. 

    Let $A\in \hyper(X)$ and $g\in \Gamma$. 
    For $x\in A$ we have $g.x\in B_\epsilon(x)\subseteq B_\epsilon(A)$ and hence $g.A\subseteq B_\epsilon(A)$. 
    Since $\epsilon$ is symmetric for $x\in A$ we also know $x\in B_\epsilon(g.x)\subseteq B_\epsilon(g.A)$ and hence $A\subseteq B_\epsilon(g.A)$. 
    Thus, $(g.A,A)\in \eta_\epsilon$. 
    This shows $\Gamma.A\subseteq B_{\eta_\epsilon}(A)$ for all $A\in \hyper(X)$. 

'$\mathcal{M}(X)$': 
    Consider $\eta\in \mathbb{U}_{\mathcal{M}(X)}$. 
    There exist\footnote{
        Recall that the sets $\eta_{(\mathfrak{f},\epsilon)}:=\left\{(\mu,\nu)\in \mathcal{M}(X)^2;\, \max_{f\in \mathfrak{f}}|\mu(f)-\nu(f)|\leq\epsilon\right\}$ with $\mathfrak{f}\subseteq C(X)$ finite and $\epsilon>0$
        form a base for $\mathbb{U}_{\mathcal{M}(X)}$. 
    } a finite subset $\mathfrak{f}\subseteq C(X)$ and $\epsilon>0$ such that
    $\eta_{(\mathfrak{f},\epsilon)}\subseteq \eta$. 
    From Lemma \ref{lem:regularRecurrenceInCX} we know of the existence of finite index subgroups $\Gamma_f\subseteq G$ such that 
    $\|f-g^*f\|_\infty\leq \epsilon$ 
    holds
    for all $f\in \mathfrak{f}$ and $g\in \Gamma_f$.     
    Clearly, $\Gamma:=\bigcap_{f\in \mathfrak{f}}\Gamma_f$ is also a finite index subgroup of $G$. 
    For $\mu \in \mathcal{M}(X)$, $g\in \Gamma$ and $f\in \mathfrak{f}$ we have
    \begin{align*}
        |\mu(f)-g.\mu(f)|
        =|\mu(f-g^*f)|
        \leq \mu(|f-g^*f|)
        \leq \|f-g^*f\|_\infty\leq \epsilon
    \end{align*}
    and hence $(\mu,g.\mu)\in \eta_{(\mathfrak{f},\epsilon)}\subseteq \eta$. This shows $\Gamma.\mu \subseteq B_\eta(\mu)$ for all $\mu\in \mathcal{M}(X)$. 
\end{proof}

From Corollary \ref{cor:characterizationSubodometersViaRegularRecurrence} we observe the following. Recall that subodometers and rational rotations on the circle are examples of uniformly regularly recurrent actions. In particular, Theorem \ref{the:INTROInducedDynamicsOfSubodometerDecompose} is a special case of the following. 

\begin{corollary}
\label{cor:InducedDynamicsDecompose}
    For a uniformly regularly recurrent action $(X,G)$ the actions $(\hyper(X),G)$ and $(\mathcal{M}(X),G)$ decompose into a disjoint union of subodometers. 
\end{corollary}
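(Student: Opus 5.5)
The plan is to chain Theorem \ref{the:HMinheritanceUniformRegularRecurrence} with Corollary \ref{cor:characterizationSubodometersViaRegularRecurrence}.

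First, let $(X,G)$ be uniformly regularly recurrent. By Theorem \ref{the:HMinheritanceUniformRegularRecurrence}, the equivalences (i)$\Rightarrow$(ii) and (i)$\Rightarrow$(iii) show that $(\hyper(X),G)$ and $(\mathcal{M}(X),G)$ are uniformly regularly recurrent. Both spaces are compact Hausdorff: for $\hyper(X)$ this is the Vietoris topology, and for $\mathcal{M}(X)$ it follows from Banach--Alaoglu. So both are actions in the sense of the paper.

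Second, I will observe that uniform regular recurrence implies pointwise regular recurrence. Indeed, given $x$ and an open neighbourhood $U$ of $x$, I choose $\epsilon\in\mathbb{U}_Z$ with $B_\epsilon(x)\subseteq U$, which is possible because the balls form a neighbourhood base. The uniform property then gives $\Gamma$ with $\Gamma.x\subseteq B_\epsilon(x)\subseteq U$.

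Third, applying the first part of Corollary \ref{cor:characterizationSubodometersViaRegularRecurrence} to $Z=\hyper(X)$ and $Z=\mathcal{M}(X)$ yields the decomposition into a disjoint union of subodometers.

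The one point needing care is that the corollary's proof produces minimal components that are subodometers, i.e.\ that each component lives on a Stone space. This is handled inside Proposition \ref{pro:characterizationGTiles}: a minimal, pointwise regularly recurrent component has a base of clopen $G$-tiles. Hence the component is totally disconnected, even though $\mathcal{M}(X)$ itself is connected. No further obstacle is expected, since all the work sits in the cited results.
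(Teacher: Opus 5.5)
Your proposal is correct and matches the paper's argument: Theorem~\ref{the:HMinheritanceUniformRegularRecurrence} gives uniform regular recurrence, hence pointwise regular recurrence, of $(\hyper(X),G)$ and $(\mathcal{M}(X),G)$, and Corollary~\ref{cor:characterizationSubodometersViaRegularRecurrence} then gives the decomposition into subodometers. One small attribution fix: the base of $G$-tiles for each minimal component comes from Proposition~\ref{pro:regularlyRecurrentPointsAndGTiles}, and Proposition~\ref{pro:characterizationGTiles} is what turns that base into the subodometer property (the paper's own proof of the corollary also swaps these two references).
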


\begin{remark}
    If $(X,G)$ is minimal, then $(\hyper(X),G)$ is pointwise regularly recurrent if and only if $(\hyper(X),G)$ is uniformly regularly recurrent. 
    Indeed, if $(\hyper(X),G)$ is pointwise regularly recurrent, then so is $(X,G)$ and hence $(X,G)$ is a subodometer. 
    As a subodometer $(X,G)$ is uniformly regularly recurrent and hence $(\hyper(X),G)$ is uniformly regularly recurrent. 
    A similar statement holds for $(\mathcal{M}(X),G)$. 
\end{remark}

\subsection{Intrinsic regular recurrence}
\label{subsec:RegularRecurrence_IntrinsicRegularRecurrence}
For subodometers, any point is regularly recurrent. The $\Gamma$ witnessing the regular recurrence of a point can be chosen as an eigenvalue as we present next. 

\begin{lemma}
\label{lem:regularRecurrenceAndEigensets}
If $(X,G)$ is a subodometer, then for any $\epsilon\in \mathbb{U}_X$ and $x\in X$ there exists $\Gamma\in \Eig_x(X,G)$ with
$\Gamma.x\subseteq B_\epsilon(x)$. 
\end{lemma}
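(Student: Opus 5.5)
The plan is to build $\Gamma$ from a $G$-tile around $x$, following the \,'$\Leftarrow$' direction in the proof of Proposition \ref{pro:regularlyRecurrentPointsAndGTiles}.

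First, choose a small $G$-tile. Given $\epsilon\in\mathbb{U}_X$ and $x\in X$, the set $B_\epsilon(x)$ is a neighbourhood of $x$, so it contains an open neighbourhood $U$ of $x$. By Corollary \ref{cor:characterizationSubodometersViaRegularRecurrence} the point $x$ is regularly recurrent. Proposition \ref{pro:regularlyRecurrentPointsAndGTiles} then gives a $G$-tile $A$ with $x\in A\subseteq U$.

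Next, pass to the finite quotient determined by the tile. Let $R:=\bigcup_{g\in G} g.A^2$. Since $\{g.A;\, g\in G\}$ is a finite partition of $X$ into clopen sets, $R$ is an invariant clopen equivalence relation. Hence $Y:=X/R$ is a finite minimal action, and the quotient map $\pi\colon X\to Y$ is a factor map. Set $\Gamma:=G_0(\pi(x))$, which lies in $\subfin(G)$.

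Now identify $Y$ with $G/\Gamma$. By the discussion of finite subodometers, there is a conjugacy $\iota\colon Y\to G/\Gamma$ with $\iota(\pi(x))=\Gamma$. The composition $\iota\circ\pi\colon X\to G/\Gamma$ is then a factor map sending $x$ to $\Gamma$, so $\Gamma\in\Eig_x(X,G)$.

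Finally, check the recurrence. For $g\in\Gamma$ we have $\pi(g.x)=g.\pi(x)=\pi(x)$. Thus $g.x$ lies in the same $R$-class as $x$, which is $A$. Therefore $g.x\in A\subseteq U\subseteq B_\epsilon(x)$, giving $\Gamma.x\subseteq B_\epsilon(x)$.

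The only step that needs care is the identification of $Y$ with $G/\Gamma$ via a conjugacy sending $\pi(x)$ to $\Gamma$. It follows directly from the fact, recalled in the preliminaries, that a finite subodometer $(Y,G)$ with base point $y$ is conjugate to $(G/G_0(y),G)$ via $y\mapsto G_0(y)$. Everything else is routine.
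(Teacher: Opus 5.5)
Your proof is correct and is essentially the paper's argument. Both pass to a finite factor $\pi$ whose fibre through $x$ lies in $B_\epsilon(x)$ and take $\Gamma:=G_0(\pi(x))$; the only difference is that the paper gets the finite factor from an invariant equivalence relation $\rho\in\mathbb{U}_X$ with $\rho\subseteq\epsilon$ (Proposition~\ref{pro:characterizationEquicontinuousStone}) rather than from a $G$-tile, and you could in fact obtain the tile directly from Proposition~\ref{pro:characterizationGTiles} without the detour through regular recurrence.
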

\begin{proof}
    By Proposition \ref{pro:characterizationEquicontinuousStone} there exists an invariant closed equivalence relation $\rho\in \mathbb{U}_X$ with $\rho\subseteq \epsilon$. 
    Consider the factor $\pi\colon X\to X/ \rho=:Y$ and note that $Y$ is finite. 
    From $\Gamma:=G_0(\pi(x))\in \Eig_x(X,G)$ we observe $g.x\in B_\rho(x)\subseteq B_\epsilon(x)$ for all $g\in \Gamma$. 
\end{proof}

This motivates the following definition. 
A minimal action $(X,G)$ is called \emph{intrinsically regularly recurrent} if for any $\epsilon\in \mathbb{U}_X$ there exists $\Gamma\in \Eig(X,G)$ such that for all $x\in X$ we have $\Gamma.x\subseteq B_\epsilon(x)$. 
Clearly any intrinsically regularly recurrent action is uniformly regularly recurrent. That the converse is not necessarily true can be observed from the following. 

\begin{proposition}
\label{pro:characterizationOdometerAsIntrinsicallyRegularlyRecurrent}
    A minimal action $(X,G)$ is an odometer if and only if it is intrinsically regularly recurrent. 
\end{proposition}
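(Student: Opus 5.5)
We prove the two implications separately. In both directions we use Proposition \ref{pro:odometerCharacterizationEIG}, and for the converse we also use Corollary \ref{cor:subodometersUniformlyRegularlyRecurrent}. The key observation is elementary: for $\Gamma\in\subfin(G)$ and a point $x$ with a factor map $\pi\colon X\to G/\Gamma$, if $\Gamma$ is normal then every $g\in\Gamma$ fixes every point of $G/\Gamma$. Indeed, $g.h\Gamma=h(h^{-1}gh)\Gamma=h\Gamma$. So $\Gamma$ preserves every fibre of $\pi$.

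'$\Rightarrow$': Let $(X,G)$ be an odometer and fix $\epsilon\in\mathbb{U}_X$. By Proposition \ref{pro:characterizationEquicontinuousStone} we choose an invariant clopen equivalence relation $\rho\subseteq\epsilon$ with finite quotient $Y=X/\rho$. Fix $x\in X$ and set $\Lambda:=G_0(\pi(x))\in\Eig(X,G)$. By Proposition \ref{pro:odometerCharacterizationEIG}(iv), $\Eig(X,G)$ is core-stable, so $\Gamma:=\Lambda_G\in\Eig(X,G)$. Since $\Gamma$ is normal and contained in the stabilizer $\Lambda$ of $\pi(x)$, it is contained in every stabilizer $\Lambda^g=G_0(\pi(g.x))$. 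Because $Y$ is minimal, every point of $Y$ has the form $\pi(g.x)$, so $\Gamma$ acts trivially on $Y$. Therefore $\pi(h.y)=\pi(y)$ for all $y\in X$ and $h\in\Gamma$, which gives $\Gamma.y\subseteq B_\rho(y)\subseteq B_\epsilon(y)$. Hence $(X,G)$ is intrinsically regularly recurrent.

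'$\Leftarrow$': Suppose $(X,G)$ is intrinsically regularly recurrent. Then it is uniformly regularly recurrent, so by Corollary \ref{cor:subodometersUniformlyRegularlyRecurrent} it is a subodometer. By Proposition \ref{pro:odometerCharacterizationEIG}(iv) it suffices to show that $\Eig(X,G)$ is core-stable.

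Take $\Lambda\in\Eig(X,G)$ with factor map $\pi\colon X\to G/\Lambda$. The relation $\rho:=R(\pi)$ is a clopen entourage, since $G/\Lambda$ is finite and discrete. By hypothesis there is $\Gamma\in\Eig(X,G)$ with $\Gamma.y\subseteq B_\rho(y)$ for all $y\in X$, that is, $\pi(h.y)=\pi(y)$ for all $h\in\Gamma$ and all $y$. As $\pi$ is surjective, $\Gamma$ fixes every coset $g\Lambda$. This means $\Gamma\subseteq\bigcap_g\Lambda^g=\Lambda_G$. Since $\Eig(X,G)$ is upward closed, $\Lambda_G\in\Eig(X,G)$.

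The argument is short. The only point needing care is the identification of "fixes all points of $G/\Lambda$" with "contained in the core". This holds because the stabilizer of $g\Lambda$ is exactly $\Lambda^g$ in the paper's convention. Minimality is used only to ensure that the orbit of one fibre covers all of $Y$.
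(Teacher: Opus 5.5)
Your proof is correct, but it uses a different criterion of Proposition~\ref{pro:odometerCharacterizationEIG} than the paper does. The paper uses intersection-closedness, criterion (iii), in both directions. For $\Rightarrow$ it covers $X$ by finitely many balls $B_\epsilon(x)$, $x\in F$, and takes pointwise witnesses $\Gamma_x\in\Eig(X,G)$ from Lemma~\ref{lem:regularRecurrenceAndEigensets}. It intersects these and concludes with the $\epsilon\epsilon\epsilon$ argument from the invariance of $\epsilon$, the same uniformization as in Proposition~\ref{pro:uniformRegularRecurrenceVSPointwiseRegularRecurrence}. For $\Leftarrow$ it takes $\Gamma_1,\Gamma_2\in\Eig(X,G)$ and the entourage $\epsilon_1\cap\epsilon_2$. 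It then checks only the base coset of each $G/\Gamma_i$ to get $\Gamma\subseteq\Gamma_1\cap\Gamma_2$.

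You use core-stability, criterion (iv), together with the observation that the normal core of a single stabilizer acts trivially on the whole finite quotient. In the forward direction this removes the compactness cover and the triangle argument entirely, since one invariant clopen $\rho\subseteq\epsilon$ suffices. It also yields a normal eigenvalue as witness. Your converse is the paper's argument with all cosets checked instead of one, which gives $\Gamma\subseteq\Lambda_G$ in one step. You also make explicit, via Corollary~\ref{cor:subodometersUniformlyRegularlyRecurrent}, that $(X,G)$ is a subodometer before applying Proposition~\ref{pro:odometerCharacterizationEIG}; the paper leaves this step implicit.
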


\begin{proof}
    Assume that $(X,G)$ is an odometer and consider $\epsilon\in \mathbb{U}_X$. 
    Recall from Proposition \ref{pro:characterizationEquicontinuousStone} that $\mathbb{U}_X$ allows for a base of invariant equivalence relations. 
    Thus, we assume w.l.o.g.\ that $\epsilon$ is an invariant equivalence relation. 
    Since $X$ is compact there exists $F\subseteq X$ finite such that $X=\bigcup_{x\in F}B_\epsilon(x)$. 
    By Lemma \ref{lem:regularRecurrenceAndEigensets} there exist $\Gamma_x\in \Eig(X,G)$ such that for all $x\in F$ we have $\Gamma_x.x\subseteq B_\epsilon(x)$. 
    Since $(X,G)$ is an odometer we know from Proposition \ref{pro:odometerCharacterizationEIG} that $\Eig(X,G)$ is intersection closed and hence $\Gamma:=\bigcap_{x\in F}\Gamma_x\in \Eig(X,G)$. 
    For $x\in X$ and $g\in \Gamma$ there exists $x'\in F$ with $x\in B_\epsilon(x')$, i.e.\ $(x',x)\in \epsilon$.
    From $x'\in F$ we observe $(g.x',x')\in \epsilon$. 
    Furthermore, the invariance of $\epsilon$ yields $(g.x,g.x')\in \epsilon$
    and hence $(g.x,x)\in \epsilon\epsilon\epsilon=\epsilon$.
    Thus, $\Gamma.x\subseteq B_\epsilon(x)$ holds for all $x\in X$. 
    This shows that $(X,G)$ is intrinsically regularly recurrent. 

    For the converse assume that $(X,G)$ is intrinsically regularly recurrent. In order to show that $(X,G)$ is an odometer we use Proposition \ref{pro:odometerCharacterizationEIG} and show that $\Eig(X,G)$ is a scale. 
    For this consider $\Gamma_1,\Gamma_2\in \Eig(X,G)$. 
    For $i\in \{1,2\}$ consider the factor map $\pi_i\colon X\to G/ \Gamma_i=:Y_i$ and $\epsilon_i:=(\pi_i\times \pi_i)^{-1}(\Delta_{Y_i})$. 
    Since $Y_i$ is finite we have $\Delta_{Y_i}\in \mathbb{U}_{Y_i}$ and hence $\epsilon_i\in \mathbb{U}_{X}$. 
    It follows that $\epsilon:=\epsilon_1\cap \epsilon_2\in \mathbb{U}_X$. 
    From the intrinsic regular recurrence of $(X,G)$ we know that there exists $\Gamma\in \Eig(X,G)$ with $\Gamma.x\subseteq B_\epsilon(x)$ for all $x\in X$. 
    To show that $\Gamma\subseteq \Gamma_1\cap \Gamma_2$ let $g\in \Gamma$ and $i\in \{1,2\}$.
    Choose $x\in \pi_i^{-1}(\Gamma_i)$. 
    From $g\in \Gamma$ we know $g.x\in B_\epsilon(x)$ and hence $(g.x,x)\in \epsilon\subseteq \epsilon_i$. 
    It follows that $g.\pi_i(x)=\pi_i(g.x)=\pi_i(x)$. 
    We thus have 
    $g\in G_0(\pi_i(x))=G_0(\Gamma_i)=\Gamma_i$. 
\end{proof}

\section{Hyperspaces of subodometers}
\label{sec:hyperspacesOfSubodometers}
Whenever $(X,G)$ is a minimal action, the minimal subactions of $(\hyper(X),G)$ are called ($\hyper$-)\emph{quasifactors}. 
For a subodometer $(X,G)$ we have shown that $(\hyper(X),G)$ decomposes into a disjoint union of quasifactors, all being subodometers. 
It is natural to ask how these are related to the subodometer $(X,G)$. 
A first insight is provided by the following well-known statement (see, for instance, the discussion in the introduction of \cite{glasner1995quasifactors}). We include the short proof\footnote{
    Note that similarly one can show that for a minimal distal action any factor is a quasifactor. 
}
for the convenience of the reader.

\begin{proposition}
\label{pro:hyperspacefactorsAsQuasifactors}
    Let $(X,G)$ be a subodometer. 
    Any factor of $(X,G)$ is conjugated to some quasifactor of $(X,G)$. 
\end{proposition}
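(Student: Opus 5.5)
Given a factor map $\pi\colon X\to Y$, I will realize $Y$ inside $\hyper(X)$ through its fibres. The map to use is $\Phi\colon Y\to \hyper(X)$, $\Phi(y):=\pi^{-1}(y)$. Each fibre is closed and nonempty, because $\pi$ is a continuous surjection. Equivariance is immediate: $\pi^{-1}(g.y)=g.\pi^{-1}(y)$, since $\pi(g.x)=g.\pi(x)$. The map $\Phi$ is also injective, because distinct fibres are disjoint and nonempty. If $\Phi$ is continuous, then $\Phi$ is a continuous injection from the compact space $Y$ into the Hausdorff space $\hyper(X)$. Hence it is a homeomorphism onto its image $\Phi(Y)$, which is closed and invariant. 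Since $(Y,G)$ is minimal, the subaction on $\Phi(Y)$ is minimal, so $\Phi(Y)$ is a quasifactor conjugated to $Y$.

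The main step is therefore the continuity of $\Phi$ in the Vietoris topology. For general factor maps, fibre maps are only semicontinuous, so this is where the subodometer structure has to be used. I will use the uniformity base $\{\eta_\epsilon\}$ together with Proposition \ref{pro:characterizationEquicontinuousStone}. Fix a symmetric $\epsilon\in\mathbb{U}_X$. First choose an invariant equivalence relation $\rho\in\mathbb{U}_X$ with $\rho\subseteq\epsilon$. Since $\pi$ is closed, the set $\pi(X\times X\setminus \rho)$-type arguments give a neighbourhood of the diagonal in $Y$. Concretely, I claim that $\delta:=\{(\pi(x),\pi(x'));\,(x,x')\in\rho\}$ contains an entourage of $Y$.

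The route I would try for this claim goes through finite factors. The relation $\rho$ is clopen and invariant, so $X/\rho$ is a finite factor. By Proposition \ref{pro:EIGfactorsAndConjugacyInvariant}(iii), $Y$ is itself a subodometer, so $Y$ has invariant clopen equivalence relations forming a base. Then I would argue via equicontinuity of $(X,G)$ and minimality as follows. Take $x_0\in X$ and the neighbourhood $B_\rho(x_0)$. Its image $\pi(B_\rho(x_0))$ is a neighbourhood of $\pi(x_0)$; indeed, factor maps between minimal equicontinuous systems are open, and I will assume this standard fact from \cite{auslander1988minimal}. By compactness and invariance of $\rho$, this gives an entourage $\delta'\subseteq\delta$ of $Y$.

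Now let $(y,y')\in\delta'$. For any $x\in\pi^{-1}(y)$, openness applied at $x$ gives $x'\in B_\rho(x)$ with $\pi(x')=y'$, so $\pi^{-1}(y)\subseteq B_\epsilon(\pi^{-1}(y'))$. By symmetry the reverse inclusion holds as well. Hence $(\Phi(y),\Phi(y'))\in\eta_\epsilon$, and $\Phi$ is uniformly continuous.

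Alternatively, and more cleanly, the openness step can be replaced by the eigenvalue machinery. Writing $Y$ as an inverse limit over finite $G/\Gamma$ with $\Gamma\in \Eig_{x}(X,G)$ reduces continuity to finite factors, where fibres are clopen and continuity of $\Phi$ is trivial. I expect the openness/uniform-fibre argument to be the only nontrivial point.
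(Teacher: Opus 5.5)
Your architecture matches the paper's: the fibre map $y\mapsto\pi^{-1}(y)$, openness of $\pi$ (which the paper also takes from \cite[Theorem~7.3]{auslander1988minimal}), and the compact-to-Hausdorff argument. The paper simply cites \cite[Appendix~A.7]{deVries1993elements} for the fact that the fibre map of an open map is Vietoris continuous. You try to prove this step directly, and the argument fails as written. Knowing $(y,y')\in\delta'\subseteq\delta=(\pi\times\pi)(\rho)$ only tells you that \emph{some} point of $\pi^{-1}(y)$ is $\rho$-related to some point of $\pi^{-1}(y')$. Openness at $x$ makes $\pi(B_\rho(x))$ a neighbourhood of $y$, but nothing in your choice of $\delta'$ forces $y'$ into it for \emph{every} $x\in\pi^{-1}(y)$. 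In fact $\delta$ is itself an entourage, being the image of the open set $\rho$ under the open map $\pi\times\pi$, and for $\delta'=\delta$ the conclusion is false. Let $G=S_3$ act on $X=S_3$ by left multiplication, take two non-commuting transpositions $k,\ell$, let $\epsilon=\rho$ be the invariant equivalence relation whose classes are the left cosets $x\{e,k\}$, and let $\pi\colon X\to G/\{e,\ell\}$. Then $(\ell,\ell k)\in\rho$, so $(\pi(e),\pi(\ell k))=(\pi(\ell),\pi(\ell k))\in\delta$. Yet a direct check using $k\ell\neq\ell k$ shows that neither point of $\rho[e]=\{e,k\}$ lies in $\pi^{-1}(\pi(\ell k))=\ell k\{e,\ell\}$. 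Hence $\pi^{-1}(\pi(e))\not\subseteq B_\epsilon(\pi^{-1}(\pi(\ell k)))$.

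What you actually need is an entourage $\delta'$ with $\delta'[y]\subseteq\bigcap_{x\in\pi^{-1}(y)}\pi(B_\rho(x))$ uniformly in $y$, i.e.\ uniform lower semicontinuity of the fibres; this is exactly what the cited fact packages. Here it is easy to supply. Let $C_1,\dots,C_n$ be the $\rho$-classes. Each $\pi(C_i)$ is clopen, since $\pi$ is open and closed, so
\[\delta':=\bigcap_{i=1}^n\bigl(\pi(C_i)^2\cup(Y\setminus\pi(C_i))^2\bigr)\]
is a clopen equivalence relation on $Y$, hence an entourage. For $(y,y')\in\delta'$ and $x\in\pi^{-1}(y)\cap C_i$ you get $y'\in\pi(C_i)$, hence some $x'\in C_i\cap\pi^{-1}(y')$ with $(x,x')\in\rho\subseteq\epsilon$. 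Symmetry of $\delta'$ gives the reverse inclusion. Invariance of $\rho$ plays no role here; openness of $\pi$ is the essential input.

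Your alternative via finite levels does not sidestep this. Each map $y\mapsto\pi_\Gamma^{-1}(y_\Gamma)$ is locally constant, but $\pi^{-1}(y)=\bigcap_\Gamma\pi_\Gamma^{-1}(y_\Gamma)$ is only a pointwise limit. Continuity of the limit requires the convergence to be uniform in $y$, which is the same lower-semicontinuity statement again. With the repaired $\delta'$, or with the citation, the rest of your proof is correct and coincides with the paper's.
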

\begin{proof}
    Let $\pi\colon X\to Y$ be a factor map. 
    Since $(X,G)$ is minimal and equicontinuous we observe that $\pi$ is an open map \cite[Theorem~7.3]{auslander1988minimal}. 
    In particular, $\phi\colon Y\to \hyper(X)$ with $y\mapsto \pi^{-1}(y)$ is continuous \cite[Appendix A.7]{deVries1993elements}.
    Since $\phi$ is clearly a bijection onto its image $\phi(Y)$, we observe it to be a homeomorphism. 
    Since $\pi$ is a factor map we observe $\phi$ to be a conjugacy between $(Y,G)$ and the quasifactor $(\phi(Y),G)$. 
\end{proof}

Note that any odometer $(X,G)$ is regular. 
It follows from \cite[Corollary~11.21]{auslander1988minimal} that any quasifactor of $(X,G)$ is conjugated to a factor of $(X,G)$. 
Hence, we have the following. 

\begin{proposition}
\label{pro:hyperspaceOdometersQFareExactlyF}
    Whenever $(X,G)$ is an odometer, then (up to conjugacy) the quasifactors of $(X,G)$ are exactly the factors of $(X,G)$. 
\end{proposition}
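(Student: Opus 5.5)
One direction is already available. By Proposition \ref{pro:hyperspacefactorsAsQuasifactors}, every factor of $(X,G)$ is conjugated to a quasifactor, so the content of the statement is the converse: when $(X,G)$ is an odometer, every quasifactor is conjugated to a factor. The paper derives this from regularity together with \cite[Corollary~11.21]{auslander1988minimal}. I would instead argue directly with eigenvalues, using Proposition \ref{pro:EIGfactorsAndConjugacyInvariant}. Let $Q\subseteq\hyper(X)$ be a quasifactor. By Corollary \ref{cor:InducedDynamicsDecompose}, $(Q,G)$ is a subodometer. Proposition \ref{pro:EIGfactorsAndConjugacyInvariant}(i) then reduces the claim that $Q$ is a factor of $X$ to the inclusion $\Eig(Q,G)\subseteq\Eig(X,G)$.

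To obtain this inclusion, fix $\Lambda\in\Eig(Q,G)$ and a factor map $\psi\colon Q\to G/\Lambda$. Using Proposition \ref{pro:characterizationOdometerAsIntrinsicallyRegularlyRecurrent}, I would pick $\Gamma\in\Eig(X,G)$ that is \emph{normal} and small enough; this is possible since odometers are generated by normal scales (Proposition \ref{pro:odometerCharacterizationEIG}(ii)). The aim is $\Gamma\subseteq\Lambda$, since upward closedness of $\Eig(X,G)$ then gives $\Lambda\in\Eig(X,G)$.

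To achieve $\Gamma\subseteq\Lambda$, choose the entourage as follows. The relation $\epsilon':=(\psi\times\psi)^{-1}(\Delta)$ is an entourage of $Q$, so there is an invariant clopen equivalence relation $\epsilon$ on $X$ with $\eta_\epsilon\cap Q^2\subseteq\epsilon'$. Intrinsic regular recurrence of $X$ gives $\Gamma\in\Eig(X,G)$ with $\Gamma.x\subseteq B_\epsilon(x)$ for all $x$. By the argument in the proof of Theorem \ref{the:HMinheritanceUniformRegularRecurrence}, this yields $(g.A,A)\in\eta_\epsilon$ for all $A\in\hyper(X)$ and $g\in\Gamma$. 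Hence $\psi(g.A)=\psi(A)$, so $g$ fixes every point of $G/\Lambda$ in the image of $\psi$. Since $\psi$ is surjective, $g$ fixes $\Lambda$ itself, i.e.\ $g\in\Lambda$. This is exactly the mechanism of the converse half of Proposition \ref{pro:characterizationOdometerAsIntrinsicallyRegularlyRecurrent}, transported to $Q$.

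The main obstacle is this transfer step: the subgroup $\Gamma$ must be an eigenvalue of $X$ and not merely of $\hyper(X)$. That is precisely where intrinsic regular recurrence, and hence the odometer hypothesis, is used. A subodometer only provides some finite-index $\Gamma$, which need not belong to $\Eig(X,G)$. Once $\Eig(Q,G)\subseteq\Eig(X,G)$ is established, Proposition \ref{pro:EIGfactorsAndConjugacyInvariant}(i) shows that $Q$ is a factor of $X$, up to conjugacy, and combining with Proposition \ref{pro:hyperspacefactorsAsQuasifactors} completes the proof.
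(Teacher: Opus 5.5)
Your proof is correct, but it takes a different route from the paper.

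\textbf{The paper's route.} The paper settles the converse in one line. An odometer is a minimal rotation, hence regular, and \cite[Corollary~11.21]{auslander1988minimal} then gives that every quasifactor is conjugated to a factor.

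\textbf{Your route.} You stay inside the eigenvalue framework. Intrinsic regular recurrence (Proposition~\ref{pro:characterizationOdometerAsIntrinsicallyRegularlyRecurrent}) yields $\Gamma\in\Eig(X,G)$ with $(g.A,A)\in\eta_\epsilon$ for all $A\in\hyper(X)$ and $g\in\Gamma$. Surjectivity of $\psi$ then forces $\Gamma\subseteq\Lambda$ for each $\Lambda\in\Eig(Q,G)$. Upward closedness and Proposition~\ref{pro:EIGfactorsAndConjugacyInvariant}(i) finish the argument.

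\textbf{Checking the steps.} Every step holds up. Two small remarks:
\begin{itemize}
\item The passage from the entourage $\epsilon'$ of $Q$ to some $\eta_\epsilon$ is justified because $\epsilon'\cup(\hyper(X)^2\setminus Q^2)$ is an open neighbourhood of the diagonal in $\hyper(X)^2$. Then Proposition~\ref{pro:characterizationEquicontinuousStone} and monotonicity of $\epsilon\mapsto\eta_\epsilon$ finish that step.
\item The normality of $\Gamma$ that you mention is never used and can be dropped.
\end{itemize}

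\textbf{Relation to the paper's later argument.} Your proof is exactly the $\hyper$-version of the argument the paper gives later for measures, in Theorem~\ref{the:forOdometersComponentsMXfactor} via Lemma~\ref{lem:odometerUniformRegularRecurrenceEigenvalueChoice}. Your trace formulation $\eta_\epsilon\cap Q^2\subseteq\epsilon'$ is the right way to do that step. The set $R(\pi)\cup(\mathcal{M}(X)\setminus Z)^2$ used there is not a neighbourhood of the diagonal, since $Z$ has empty interior in the connected space $\mathcal{M}(X)$. It should read $R(\pi)\cup(\mathcal{M}(X)^2\setminus Z^2)$.

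\textbf{What each route buys.} Your argument is self-contained within the paper's framework and avoids the abstract theory of regular flows. It treats $\hyper(X)$ and $\mathcal{M}(X)$ uniformly. It also pinpoints where the odometer hypothesis enters: the witness $\Gamma$ must lie in $\Eig(X,G)$, not merely in $\subfin(G)$. This is exactly what fails for non-odometers in Theorem~\ref{the:hyperspaceCharacterizationOdometerViaQuasifactors}.

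The paper's citation is shorter and reaches beyond Stone spaces, for example to minimal rotations on connected compact groups. There your argument is unavailable, since it needs $Q$ to be a subodometer and the eigenvalue criterion of Proposition~\ref{pro:EIGfactorsAndConjugacyInvariant}.
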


For subodometers the situation is more complicated as we will see in Theorem \ref{the:hyperspaceCharacterizationOdometerViaQuasifactors} below. 
However, we have the following, which also follows from combining the results of \cite{glasner1975compressibility,cortez2026minimal}. 

\begin{corollary}
\label{cor:hyperspaceQuasifactorsAreFactorsOfEnvelopingOdometer}
    Let $(X,G)$ be a subodometer and denote $(\envelopingOdometer{X},G)$ for its enveloping odometer. Any quasifactor of $(X,G)$ is a factor of $(\envelopingOdometer{X},G)$.  
\end{corollary}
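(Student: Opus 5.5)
The plan is to show that every quasifactor $Q$ of $(X,G)$ is an odometer-like object living above $\envelopingOdometer{X}$. The most direct route is to realise $Q$ as a factor of a quasifactor of the odometer $\envelopingOdometer{X}$ and then invoke Proposition \ref{pro:hyperspaceOdometersQFareExactlyF}.

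Let $\pi\colon \envelopingOdometer{X}\to X$ be the factor map from the enveloping odometer. It induces a continuous equivariant map $\pi_*\colon \hyper(\envelopingOdometer{X})\to\hyper(X)$, $B\mapsto \pi(B)$. Continuity holds because images of closed sets are closed, and for an entourage $\epsilon$ of $X$ uniform continuity gives $\delta$ with $\pi\times\pi(\delta)\subseteq\epsilon$, so $\eta_\delta$ is mapped into $\eta_\epsilon$. Equivariance is immediate. Surjectivity also holds, since $A=\pi(\pi^{-1}(A))$ for $A\in\hyper(X)$.

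Now let $Q\subseteq\hyper(X)$ be a minimal subset. Then $\pi_*^{-1}(Q)$ is a nonempty closed invariant subset of $\hyper(\envelopingOdometer{X})$. By Corollary \ref{cor:InducedDynamicsDecompose}, applied to the odometer $\envelopingOdometer{X}$, which is uniformly regularly recurrent by Corollary \ref{cor:subodometersUniformlyRegularlyRecurrent}, the action on $\hyper(\envelopingOdometer{X})$ decomposes into minimal components. Hence $\pi_*^{-1}(Q)$ contains a minimal subset $Q'$, which is a quasifactor of $\envelopingOdometer{X}$. The restriction $\pi_*|_{Q'}\colon Q'\to Q$ is an equivariant map into a minimal action, so its image is closed, invariant and nonempty, hence all of $Q$. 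Thus $Q$ is a factor of $Q'$.

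By Proposition \ref{pro:hyperspaceOdometersQFareExactlyF}, $Q'$ is conjugated to a factor of $\envelopingOdometer{X}$. Composing the factor maps shows that $Q$ is a factor of $\envelopingOdometer{X}$.

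There is no real obstacle. The only points needing care are the continuity of $\pi_*$ and the existence of a minimal subset inside $\pi_*^{-1}(Q)$. The latter also follows from Zorn's lemma, or from the equicontinuity of the action on $\hyper(\envelopingOdometer{X})$, which gives pointwise almost periodicity. Everything else rests on Proposition \ref{pro:hyperspaceOdometersQFareExactlyF}, which the paper already has.
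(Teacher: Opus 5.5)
Your proof is correct and is essentially the paper's argument. The paper likewise pushes closed sets forward along a factor map $\pi\colon\envelopingOdometer{X}\to X$ to get a factor map $\hyper(\envelopingOdometer{X})\to\hyper(X)$, notes that every quasifactor of $X$ is then a factor of a quasifactor of $\envelopingOdometer{X}$, and concludes with Proposition~\ref{pro:hyperspaceOdometersQFareExactlyF}. You just spell out the details the paper calls straightforward: uniform continuity of $A\mapsto\pi(A)$, surjectivity via $A=\pi(\pi^{-1}(A))$, and the choice of a minimal set inside $\pi_*^{-1}(Q)$. Only your opening framing is slightly off: a factor of $\envelopingOdometer{X}$ lies below it, not above, and need not itself be an odometer.
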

\begin{proof}
    Consider a factor map $\pi\colon \envelopingOdometer{X}\to X$. 
    It is straightforward to verify that $\pi_\hyper \colon \hyper(\envelopingOdometer{X})\to \hyper(X)$ given by $A\mapsto \pi(A)$ establishes a factor map. 
    Thus, any quasifactor of $(X,G)$ is a factor of a quasifactor of $(\envelopingOdometer{X},G)$ and the statement follows from Proposition \ref{pro:hyperspaceOdometersQFareExactlyF}. 
\end{proof}

It is natural to ask which subodometers allow for quasifactors that are not factors. 
As we will see in Subsection \ref{subsec:hyperspacesOfSubodometers_QuasifactorsOfSmallOdometers} below, there exist finite subodometers, that are not odometers, that have only factors as quasifactors.
However, for infinite subodometers this phenomenon does not occur. 

\begin{theorem}
\label{the:hyperspaceCharacterizationOdometerViaQuasifactors}
    Let $(X,G)$ be an infinite subodometer. 
    The following statements are equivalent.
    \begin{enumerate}[label=(\roman*)]
        \item \label{enu:qfXodometer}
            $(X,G)$ is an odometer.
        \item \label{enu:qfXregular}
            $(X,G)$ is regular. 
        \item \label{enu:qfallQFareF}
            All quasifactors of $(X,G)$ are factors. 
        \item \label{enu:qfallfinQFareF}
            All finite quasifactors of $(X,G)$ are factors. 
    \end{enumerate}
\end{theorem}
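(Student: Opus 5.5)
The implications \ref{enu:qfXodometer}$\Rightarrow$\ref{enu:qfXregular}$\Rightarrow$\ref{enu:qfallQFareF}$\Rightarrow$\ref{enu:qfallfinQFareF} are essentially available already. Any minimal rotation is regular, as noted in the preliminaries on rotations. For a regular minimal equicontinuous action, \cite[Corollary~11.21]{auslander1988minimal} yields that every quasifactor is conjugated to a factor; this is the argument behind Proposition \ref{pro:hyperspaceOdometersQFareExactlyF}, which really only uses regularity. The last implication is trivial. So the work lies in \ref{enu:qfallfinQFareF}$\Rightarrow$\ref{enu:qfXodometer}.

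For this implication I would argue by contraposition via Proposition \ref{pro:odometerCharacterizationEIG}. If $(X,G)$ is not an odometer, then $\Eig(X,G)$ is not core-stable, so there is $\Gamma\in\Eig(X,G)$ with $\Gamma_G\notin\Eig(X,G)$. I would fix a factor map $\pi\colon X\to G/\Gamma$ and produce a finite quasifactor conjugated to $G/\Gamma_G$, or to some $G/\Lambda$ with $\Lambda\notin\Eig(X,G)$. That quasifactor is then not a factor by Proposition \ref{pro:EIGfactorsAndConjugacyInvariant}(i).

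The natural candidate is a closed set built from a fibre. Pick $x$ with $\pi(x)=\Gamma$, so the fibre is $C:=\pi^{-1}(\Gamma)$ with stabilizer $\Gamma$. The idea is to cut $C$ into a proper clopen piece $A\subsetneq C$ whose stabilizer in $\hyper(X)$ is exactly $\Gamma_G$ or at least not in $\Eig(X,G)$. Since $X$ is infinite, $C$ is infinite, so there is room to split it. Concretely, I would consider finite unions of the translates $g.C'$ where $C'$ is a smaller tile, namely the fibre of a finer finite factor $G/\Lambda$ with $\Lambda\in\Eig_x(X,G)$, $\Lambda\subseteq\Gamma$. A set of the form
\[A=\bigcup_{g\in\Gamma_G}g.C' \quad\text{or}\quad A=\bigcup_{h\in H}h.C'\]
with $H$ chosen so that the stabilizer $\{g\in G;\, g.A=A\}$ equals $\Gamma_G\Lambda$ has a finite orbit in $\hyper(X)$. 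This orbit is a finite quasifactor conjugated to $G/\mathrm{Stab}(A)$.

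I must then show $\mathrm{Stab}(A)\notin\Eig(X,G)$ for a suitable choice. The most promising choice is to take $A$ as the union of the fibres $\pi^{-1}(g\Gamma)$ over the $\Gamma_G$-orbit, intersected appropriately with a finer partition. Alternatively, one can take $A:=\bigcap_{g\in G}$ of something, or use the subset $\overline{\Gamma_G.x}$, which is a $G$-tile by the proof of Proposition \ref{pro:regularlyRecurrentPointsAndGTiles} (with $\Gamma_G$ normal). Its stabilizer contains $\Gamma_G$, and I would compute it as $\Gamma_G G_0$-type subgroups via Lemma \ref{lem:normalSubgroupMinimalComponents}. The key intended identity is that the stabilizer of $\overline{N.x}$ for normal finite index $N$ equals $N\cdot(\text{something in }\Eig)$. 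Taking $N$ among normal subgroups inside $\Gamma_G$ then gives quasifactors conjugated to $G/\Lambda'$ for groups $\Lambda'$ lying between $N$ and the eigenvalues.

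The main obstacle is exactly this: guaranteeing that some finite quasifactor has a stabilizer outside $\Eig(X,G)$. The infinitude hypothesis must be used here, since finite non-normal examples fail. My plan is to use infiniteness to pick $\Lambda\in\Eig_x(X,G)$ strictly inside $\Gamma$ with $[\Gamma:\Lambda]\ge2$. I would then build $A$ as a union of $\Lambda$-tiles inside $\pi^{-1}(\Gamma)$ that is invariant under $\Gamma_G$ but not under any conjugate containing an eigenvalue. Finally I would check, via the criterion that $\mathrm{Stab}(A)\in\Eig(X,G)$ iff some conjugate of an element of $\Eig_x(X,G)$ lies inside it, that this fails.
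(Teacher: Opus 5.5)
Your implications (i)$\Rightarrow$(ii)$\Rightarrow$(iii)$\Rightarrow$(iv) are fine and match the paper: odometers are regular, and \cite[Corollary~11.21]{auslander1988minimal} applies to regular subodometers.

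The gap is in (iv)$\Rightarrow$(i). You never produce a finite quasifactor whose stabilizer lies outside $\Eig(X,G)$, and every concrete candidate you name is provably a factor.
\begin{itemize}
\item A set with stabilizer $\Gamma_G\Lambda$ gives an eigenvalue, since $\Gamma_G\Lambda\supseteq\Lambda\in\Eig(X,G)$ and $\Eig(X,G)$ is upward closed.
\item For normal $N\in\subfin(G)$, the stabilizer of $\overline{N.x}$ lies in $\Eig(X,G)$; this is exactly Lemma~\ref{lem:preparationForDisjointnessCharacterization}.
\item The union of the fibres $\pi^{-1}(g\Gamma)$ over $g\in\Gamma_G$ is just $\pi^{-1}(\Gamma)$, with stabilizer $\Gamma$.
\end{itemize}
Your final plan, a union of $\Lambda$-tiles inside the single fibre $\pi^{-1}(\Gamma)$, also fails in general. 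Take $G=\mathbb{Z}\times S_3$ acting on $X=\widehat{\mathbb{Z}}\times\{1,2,3\}$, with $\mathbb{Z}$ translating the profinite completion and $S_3$ permuting $\{1,2,3\}$. This is an infinite subodometer generated by the scale $\{n\mathbb{Z}\times\grouphull{(12)}\}$, and it is not an odometer. Its non-normal eigenvalues are exactly the groups $m\mathbb{Z}\times\grouphull{\tau}$ with $\tau$ a transposition. For $\Gamma=m\mathbb{Z}\times\grouphull{(12)}$, the fibre $\pi^{-1}(\Gamma)$ is a set $(z_0+m\widehat{\mathbb{Z}})\times\{3\}$, on which $(12)$ acts trivially. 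Hence every closed $A\subseteq\pi^{-1}(\Gamma)$ with finite orbit has stabilizer $l\mathbb{Z}\times\grouphull{(12)}\in\Eig(X,G)$. So every quasifactor your construction can produce here is a factor.

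Two ideas are missing.

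First, $A$ has to straddle two $\Gamma$-fibres. Take $\Lambda\in\Eig(X,G)$ with $\Lambda\subseteq\Gamma$ and $[\Gamma\colon\Lambda]\geq 3$, compatible factor maps $\pi_\Gamma=\pi^\Lambda_\Gamma\circ\pi_\Lambda$, and $g\notin\Gamma$. The paper takes $A=\pi_\Gamma^{-1}(\Gamma)\cup\pi_\Lambda^{-1}(g\Lambda)$ and shows $G_0(A)=\Gamma\cap\Lambda^g$ (Lemma~\ref{lem:quasiFactorConstruction}). In the example, $\Gamma=\mathbb{Z}\times\grouphull{(12)}$, $g=(0,(13))$ and $\Lambda=n\mathbb{Z}\times\grouphull{(12)}$ with $n\geq 3$ give stabilizer $n\mathbb{Z}\times\{e\}\notin\Eig(X,G)$.

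Second, choosing any $\Gamma$ with $\Gamma_G\notin\Eig(X,G)$ does not guarantee $\Gamma\cap\Lambda^g\notin\Eig(X,G)$, because $\Gamma\cap\Gamma^g$ may still be an eigenvalue. The paper passes to a \emph{settled} eigenvalue, one for which $\Gamma\cap\Gamma^g\in\Eig(X,G)$ forces $\Gamma^g=\Gamma$. It is obtained as $\Gamma_I=\bigcap_{h\in I}\Gamma^h$, with $I$ maximal among subsets of a fundamental domain satisfying $\Gamma_I\in\Eig(X,G)$ (Lemma~\ref{lem:creatingEsettledSets}). If $(X,G)$ is not an odometer, some settled eigenvalue is non-normal (Proposition~\ref{pro:filteredEigensetsViaEsettled}). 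Then $\Gamma\cap\Lambda^g\subseteq\Gamma\cap\Gamma^g\notin\Eig(X,G)$ by upward closure.

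Infiniteness is needed only to find $\Lambda$ of index at least $3$ in $\Gamma$ (Lemma~\ref{lem:indicesInInfiniteEigensets}), not to split a single fibre.
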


For the preparation of the proof of Theorem \ref{the:hyperspaceCharacterizationOdometerViaQuasifactors} we introduce the following notion. 

\subsection{Quasifactors via settled subgroups}
\label{subsec:hyperspacesOfSubodometers_quasifactorsViaSettledSubgroups}

\begin{definition}
    Let $(X,G)$ be a minimal action. 
    We call $\Gamma\in \Eig(X,G)$ \emph{settled} (w.r.t.\ $(X,G)$) if for all $g\in G$ with $\Gamma^g \cap \Gamma\in \Eig(X,G)$ we have $\Gamma^g=\Gamma$. 
\end{definition}

The notion of settledness can be used as follows for the construction of quasifactors that are not factors. 
Recall that for $\Lambda\leq \Gamma\leq G$ we denote $\pi^{\Lambda}_{\Gamma}\colon G/\Lambda\to G/\Gamma$ for the factor map with $\Lambda\mapsto \Gamma$. 

\begin{lemma}
\label{lem:quasiFactorConstruction}
    Let $(X,G)$ be a minimal action, $g\in G$ and $\Gamma,\Lambda\in \Eig(X,G)$ such that $\Lambda\subseteq \Gamma$. 
    There exist factor maps $\pi_\Lambda\colon X\to G/\Lambda$ and 
    $\pi_\Gamma\colon X\to G/\Gamma$ with $\pi_\Gamma=\pi^\Gamma_\Lambda\circ \pi_\Lambda$. 
    Consider 
    \[A := \pi_\Gamma^{-1}(\Gamma)\cup \pi_\Lambda^{-1}(g\Lambda)\in \hyper(X).\]
\begin{itemize}
    \item[(i)] If $g\notin \Gamma$ and $[\Gamma\colon \Lambda]\geq 3$, then $G_0(A)=\Gamma\cap \Lambda^g$. 
    \item[(ii)] If $\Gamma$ is settled, $\Gamma^g\neq \Gamma$ and $[\Gamma\colon \Lambda]\geq 3$, then 
    $(\overline{G.A},G)$ is a finite quasifactor of $(X,G)$ that is not a factor.
\end{itemize}
\end{lemma}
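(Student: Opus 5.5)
The plan is to reduce everything to the finite action $(G/\Lambda,G)$, where stabilizers can be computed by hand. For the existence of the maps I take any factor map $\pi_\Lambda\colon X\to G/\Lambda$, which exists since $\Lambda\in\Eig(X,G)$. I then set $\pi_\Gamma:=\pi^\Lambda_\Gamma\circ\pi_\Lambda$, the composition with the canonical factor map $G/\Lambda\to G/\Gamma$ sending $\Lambda\mapsto\Gamma$ (this is the map written $\pi^\Gamma_\Lambda$ in the statement). It is a factor map onto $G/\Gamma$, and the required relation holds by construction.

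Next I rewrite $A$ as a full preimage. The fibre $(\pi^\Lambda_\Gamma)^{-1}(\Gamma)$ is the block $\{h\Lambda;\, h\in\Gamma\}$, which has $[\Gamma\colon\Lambda]$ elements. Hence
\[
A=\pi_\Lambda^{-1}(B), \qquad B:=\{h\Lambda;\, h\in\Gamma\}\cup\{g\Lambda\}\subseteq G/\Lambda .
\]
Since $B$ is finite, $A$ is closed, and $A$ is nonempty. Because $\pi_\Lambda$ is surjective and equivariant, for $k\in G$ we have $k.A=A$ if and only if $k.B=B$. Thus $G_0(A)=G_0(B)$, the stabilizer of $B$ in the finite action on subsets of $G/\Lambda$.

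For (i), I use the partition of $G/\Lambda$ into the fibres of $\pi^\Lambda_\Gamma$, namely the blocks $\{h\Lambda;\, h\in k\Gamma\}$ for $k\Gamma\in G/\Gamma$, each of size $[\Gamma\colon\Lambda]\ge 3$. The action of $G$ permutes these blocks. Since $g\notin\Gamma$, the set $B$ contains the block of $\Gamma$ entirely and meets exactly one other block, the block of $g\Gamma$, in the single point $g\Lambda$. Then $k.B$ contains the block of $k\Gamma$ entirely and meets the block of $kg\Gamma$ in the single point $kg\Lambda$.

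Suppose $k.B=B$. The only block contained in $B$ is the block of $\Gamma$, since the other one is met in just one point, which is fewer than $3$. Hence $k\Gamma=\Gamma$, that is, $k\in\Gamma$. Comparing the remaining points then gives $kg\Lambda=g\Lambda$, that is, $k\in g\Lambda g^{-1}=\Lambda^g$. Conversely, every $k\in\Gamma\cap\Lambda^g$ fixes both parts of $B$. This gives $G_0(A)=\Gamma\cap\Lambda^g$. The hypothesis $[\Gamma\colon\Lambda]\ge 3$ is used exactly to distinguish the full block from a single point (size $2$ would not even be needed for $1\neq 2$, but it keeps the block from having size $1$). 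Making this counting argument clean is the only real step.

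For (ii), first note that $g\notin\Gamma$, since otherwise $\Gamma^g=\Gamma$. So (i) applies and $G_0(A)=\Gamma\cap\Lambda^g$, a finite index subgroup. Hence $G.A$ is finite and therefore closed. The subaction $(\overline{G.A},G)=(G.A,G)$ is a finite minimal action, hence a finite quasifactor. By the preliminaries on finite subodometers it is conjugate to $G/(\Gamma\cap\Lambda^g)$.

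Suppose it were a factor of $X$. Then $\Gamma\cap\Lambda^g\in\Eig(X,G)$. Since $\Lambda^g\subseteq\Gamma^g$, we have $\Gamma\cap\Lambda^g\subseteq\Gamma\cap\Gamma^g$. Because $\Eig(X,G)$ is upward closed, this gives $\Gamma\cap\Gamma^g\in\Eig(X,G)$. Settledness of $\Gamma$ then forces $\Gamma^g=\Gamma$, contradicting the hypothesis. Therefore $(\overline{G.A},G)$ is a finite quasifactor that is not a factor.
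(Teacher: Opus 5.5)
Your proof is correct and follows essentially the paper's route: both compute the stabilizer of the finite set $\{h\Lambda;\, h\in\Gamma\}\cup\{g\Lambda\}\subseteq G/\Lambda$, first forcing the stabilizer into $\Gamma$ via the index hypothesis, then using the extra point $g\Lambda$ to land in $\Lambda^g$, and your part (ii) matches the paper's argument. The only cosmetic difference is that you argue with the fibres of $G/\Lambda\to G/\Gamma$ as blocks, whereas the paper uses a fundamental domain $F$ and the permutation of $F\cup\{g\}$ induced by $h$; your side remark that index $\geq 2$ already suffices is correct, though garbled in wording, and applies equally to the paper's pigeonhole step.
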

\begin{proof}
    Since $\Lambda\in \Eig(X,G)$ there exists a factor map $\pi_\Lambda\colon X\to G/\Lambda$ and $\pi_\Gamma:=\pi^\Lambda_\Gamma\circ \pi_\Lambda$ yields a factor map $X\to G/\Gamma$. 

(i):
    Clearly, we have $G_0(A)\supseteq \Gamma\cap \Lambda^g$. 
    For the converse consider $h\in G_0(A)$. 
    Let $F\subseteq \Gamma$ be a fundamental domain of $\Lambda\leq \Gamma$ and denote
    $F':=F\cup \{g\}$.
    Note that
    \begin{align*}
        A
        =\pi^{-1}(\{g'\Lambda;\, g'\in F\}) \cup \pi^{-1}(g\Lambda)
        =\pi^{-1}\left(\{g'\Lambda;\, g'\in F'\}\right). 
    \end{align*}
    Thus, $h$ establishes a permutation $\sigma\colon F'\to F'$ with 
    $hg'\Lambda=\sigma(g')\Lambda$ for $g'\in F'$. 

    We next show that $h\in \Gamma$. 
    For this recall that $|F|=[\Gamma\colon \Lambda]\geq 3$.    
    Thus, there exist $g_1,g_2\in F$ with $\sigma(g_1)=g_2$. 
    We have $hg_1\in hg_1\Lambda = g_2\Lambda$ and hence 
    \[h\in g_2\Lambda g_1^{-1}\subseteq F\Lambda F\subseteq\Gamma\Gamma\Gamma=\Gamma.\] 

    It remains to show that $h\in \Lambda^g$. 
    For this recall that $g\in F'$. 
    If $\sigma(g)\neq g$ we have $\sigma(g)\in F$ and hence 
    \[g
    =h^{-1}hg\in h^{-1}hg\Lambda
    =h^{-1}\sigma(g)\Lambda
    \subseteq \Gamma F \Lambda
    \subseteq \Gamma\Gamma\Gamma=\Gamma,\]
    a contradiction. 
    This shows $\sigma(g)=g$ and it follows that
    \[h=hgg^{-1}\in hg\Lambda g^{-1}=\sigma(g)\Lambda g^{-1}=g\Lambda g^{-1}=\Lambda^g.\]

(ii):
    Note that $\Gamma^g\neq \Gamma$ implies $g\notin \Gamma$. 
    We thus observe from (i) that $G_0(A)=\Gamma\cap \Lambda^g$ is a finite index subgroup of $G$. 
    It follows that $(\overline{G.A},G)$ is finite. 

    If $(\overline{G.A},G)$ is a factor of $(X,G)$, then 
    $\Gamma\cap \Lambda^g=G_0(A)\in \Eig(X,G)$. 
    Since $\Eig(X,G)$ is upward closed we observe from 
    $\Gamma\cap \Lambda^g\subseteq \Gamma\cap \Gamma^g$ that 
    $\Gamma\cap \Gamma^g\in \Eig(X,G)$.
    From $\Gamma$ being settled it follows that $\Gamma=\Gamma^g$, a contradiction. 
    This shows that $(\overline{G.A},G)$ is not a factor of $(X,G)$. 
\end{proof}

\subsection{Odometers and settled subgroups}
\label{subsec:hyperspacesOfSubodometers_OdometersAndSettledSubgroups}
In consideration of the previous lemma we are interested in the existence of settled eigenvalues that are not normal. We next show that a subodometer is an odometer if and only if all settled eigenvalues are normal. 
To establish this we will use the following notion. 

\begin{definition}
    For $\Gamma\in \subfin(G)$ and $I\subseteq G$ we denote \[\Gamma_I:=\bigcap_{g\in I}\Gamma^g.\]     
\end{definition}

\begin{lemma}
\label{lem:conjugationInsights}
    Let $\Gamma\in \subfin(G)$, $g\in G$ and $I\subseteq G$. 
    \begin{itemize}
        \item[(i)] $(\Gamma_I)^g=\Gamma_{gI}$. 
        \item[(ii)] If $\Gamma\subseteq \Gamma^g$, then $\Gamma=\Gamma^g$. 
        \item[(iii)] For a fundamental domain $F$ of $\Gamma$ there exists $J\subseteq F$ with $\Gamma_I=\Gamma_J$. 
    \end{itemize}
\end{lemma}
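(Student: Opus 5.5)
The three parts are elementary facts about conjugation. I would prove them in order, each from the definition $\Gamma_I=\bigcap_{h\in I}\Gamma^h$ and the rule $(\Gamma^h)^g=\Gamma^{gh}$ from the preliminaries.

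For (i), conjugation by $g$ is a bijection of $G$, so it commutes with intersections. Hence
\[(\Gamma_I)^g=g\Bigl(\bigcap_{h\in I}\Gamma^h\Bigr)g^{-1}=\bigcap_{h\in I}(\Gamma^h)^g=\bigcap_{h\in I}\Gamma^{gh}=\Gamma_{gI}.\]

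For (ii), I would use the index argument of Remark \ref{rem:fundamentalDomainsComposition}. Conjugation by $g$ is an automorphism of $G$ carrying $\Gamma$ to $\Gamma^g$. It therefore induces a bijection $G/\Gamma\to G/\Gamma^g$, $h\Gamma\mapsto ghg^{-1}\Gamma^g$. So $[G\colon\Gamma^g]=[G\colon\Gamma]<\infty$. Since $\Gamma\subseteq\Gamma^g$ and both have the same finite index, Remark \ref{rem:fundamentalDomainsComposition} gives $\Gamma=\Gamma^g$. The finiteness of the index is essential here: for infinite-index subgroups a proper inclusion $\Gamma\subsetneq\Gamma^g$ can occur.

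For (iii), the point is that $\Gamma^h$ depends only on the coset $h\Gamma$. If $h=f\gamma$ with $f\in F$ and $\gamma\in\Gamma$, then
\[\Gamma^h=f\gamma\Gamma\gamma^{-1}f^{-1}=f\Gamma f^{-1}=\Gamma^f.\]
Every $h\in I$ has such a unique representation $h=f_h\gamma_h$ with $(f_h,\gamma_h)\in F\times\Gamma$. I would set $J:=\{f_h;\, h\in I\}\subseteq F$. Then $\{\Gamma^h;\, h\in I\}=\{\Gamma^f;\, f\in J\}$ as sets of subgroups, so the intersections agree and $\Gamma_I=\Gamma_J$.

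The only step needing care is the index equality in (ii); the rest is bookkeeping with the convention $\Gamma^g=g\Gamma g^{-1}$ and with the fundamental domain taken as a set of left coset representatives.
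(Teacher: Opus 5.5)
Your proposal is correct and follows essentially the same route as the paper's proof. Part (i) pushes conjugation through the intersection. Part (ii) compares the equal finite indices of $\Gamma$ and $\Gamma^g$ via Remark~\ref{rem:fundamentalDomainsComposition}. Part (iii) uses $\Gamma^h=\Gamma^{f_h}$ for $h=f_h\gamma_h$ with $(f_h,\gamma_h)\in F\times\Gamma$ and sets $J=\{f_h;\, h\in I\}$.
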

\begin{proof}
(i):
    $(\Gamma_I)^g=g\left(\bigcap_{h\in I} h\Gamma h^{-1}\right)g^{-1}=\bigcap_{h\in I} gh\Gamma (gh)^{-1}=\Gamma_{gI}.$

(ii): 
    Note that $[G\colon \Gamma]=[G\colon \Gamma^g]$.
    The statement thus follows from Remark \ref{rem:fundamentalDomainsComposition}. 

(iii): 
    For $g\in I\subseteq G$ there exist $f_g\in F$ and $h_g\in \Gamma$ with $g=f_gh_g$. We have $g\Gamma=f_gh_g\Gamma=f_g\Gamma$ and hence
    \[\Gamma^g=g\Gamma g^{-1}=(g\Gamma)(g\Gamma)^{-1}=(f_g\Gamma)(f_g\Gamma)^{-1}=f_g\Gamma f_g^{-1}=\Gamma^{f_g}.\]
    Thus, $J:=\{f_g;\, g\in I\}$ satisfies $\Gamma_J=\Gamma_I$. 
\end{proof}

\begin{lemma}
\label{lem:creatingEsettledSets}
    Let $(X,G)$ be a minimal action. 
    For every $\Gamma\in \Eig(X,G)$ 
    there exists a finite subset $I\subseteq G$, such that $\Gamma_I$ is settled. 
\end{lemma}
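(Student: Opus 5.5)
Given $\Gamma\in \Eig(X,G)$, the plan is an extremal argument over finite intersections of conjugates of $\Gamma$. Let $F$ be a fundamental domain of $\Gamma$ and consider
\[\mathcal{C}:=\{J\subseteq F;\, \Gamma_J\in \Eig(X,G)\}.\]
This family is nonempty, since $e_G$ may be taken in $F$ and then $\Gamma_{\{e_G\}}=\Gamma\in \Eig(X,G)$. It is also finite, because $F$ is finite. Among all $J\in \mathcal{C}$ I will choose one for which $\Gamma_J$ has maximal index $[G\colon \Gamma_J]$. This is possible since each $\Gamma_J$ is a finite intersection of finite index subgroups, so only finitely many values occur. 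Set $I:=J$. Then $\Gamma_I\in\Eig(X,G)$ by construction, and it remains to show that $\Gamma_I$ is settled.

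Let $g\in G$ with $\Gamma_I^g\cap \Gamma_I\in \Eig(X,G)$. By Lemma \ref{lem:conjugationInsights}(i) we have $\Gamma_I^g=\Gamma_{gI}$, hence
\[\Gamma_I^g\cap\Gamma_I=\Gamma_{gI\cup I}.\]
By Lemma \ref{lem:conjugationInsights}(iii) there is $J'\subseteq F$ with $\Gamma_{J'}=\Gamma_{gI\cup I}$. Since this group lies in $\Eig(X,G)$, we get $J'\in\mathcal{C}$. Maximality of the index then gives $[G\colon\Gamma_{J'}]\le [G\colon \Gamma_I]$. On the other hand $\Gamma_{J'}\subseteq \Gamma_I$, so Remark \ref{rem:fundamentalDomainsComposition} forces $\Gamma_{J'}=\Gamma_I$, that is, $\Gamma_I\subseteq \Gamma_I^g$. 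Lemma \ref{lem:conjugationInsights}(ii), applied to the finite index subgroup $\Gamma_I$, now yields $\Gamma_I=\Gamma_I^g$, which is exactly settledness.

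The only delicate point is ensuring that the maximum exists. Working over subsets of the finite set $F$, via Lemma \ref{lem:conjugationInsights}(iii), handles this directly. Alternatively, one can bound the index by $[G\colon\Gamma_G]$, since the normal core has finite index; either route suffices. Note also that upward closedness of $\Eig(X,G)$ is not needed here, since membership of the intersection in $\Eig(X,G)$ is given by hypothesis. Finally, $I$ is finite because $I=J\subseteq F$.
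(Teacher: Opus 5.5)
Your proof is correct and follows essentially the same route as the paper: an extremal choice within the finite family of subsets $J$ of a fundamental domain with $\Gamma_J\in\Eig(X,G)$, followed by Lemma \ref{lem:conjugationInsights}(i)--(iii). The only difference is that you choose $I$ to maximize the index $[G\colon\Gamma_J]$ and then invoke Remark \ref{rem:fundamentalDomainsComposition}, whereas the paper takes $I$ maximal with respect to set inclusion and compares it with $I\cup J$.
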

\begin{proof}
    Consider $\Gamma\in \Eig(X,G)$. 
    Let $F$ be a fundamental domain of $\Gamma$ with $e_G\in F$. 
    Denote by $\mathcal{I}$ the family of all $I\subseteq F$ with $\Gamma_I\in \Eig(X,G)$. 
    Since $F$ is finite we observe $\mathcal{I}$ to be finite. 
    For $I=\{e_G\}$ we observe $\Gamma_{I}=\Gamma\in \Eig(X,G)$. 
    This shows that $\mathcal{I}$ is a nonempty finite set. 
    Thus, there exists a maximal element $I$ w.r.t.\ set inclusion. 

    To show that $\Gamma_I$ is settled, consider $g\in G$ with 
    $(\Gamma_I)^g\cap \Gamma_I\in \Eig(X,G)$. 
    Since $F$ is a fundamental domain of $\Gamma\leq G$ we know from Lemma \ref{lem:conjugationInsights} that there exists a subset $J\subseteq F$ with $\Gamma_{J}=\Gamma_{gI}$. 
    Denote $\hat{I}:=I\cup J$ and note that $\hat{I}\subseteq F$. 
    Clearly, we have
    $I\subseteq \hat{I}$ and $\Gamma_J
    =\Gamma_{gI}
    =(\Gamma_I)^g.$
    Thus, 
    $\Gamma_{\hat{I}}=\Gamma_{J}\cap \Gamma_{I}=(\Gamma_{I})^g\cap \Gamma_{I}\in \Eig(X,G)$, i.e.\ $\hat{I}\in \mathcal{I}$.    
    Since $I$ is a maximal element of $\mathcal{I}$ we observe $I=\hat{I}$. 
    This shows $J\subseteq I$ and hence
    $(\Gamma_{I})^g=\Gamma_{gI}=\Gamma_{J}\supseteq \Gamma_{I}$. 
    It follows from Lemma \ref{lem:conjugationInsights} that $(\Gamma_{I})^g=\Gamma_{I}$.  
\end{proof}

Note that any normal $\Gamma\in \Eig(X,G)$ is settled.
We next present that the converse characterizes odometers. 

\begin{proposition}
\label{pro:filteredEigensetsViaEsettled}
   A subodometer $(X,G)$ is an odometer if and only if all settled $\Gamma\in \Eig(X,G)$ are normal. 
\end{proposition}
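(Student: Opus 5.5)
For the forward direction, assume $(X,G)$ is an odometer and let $\Gamma\in\Eig(X,G)$ be settled. By Proposition \ref{pro:odometerCharacterizationEIG}, $\Eig(X,G)$ is core-stable, so $\Gamma_G\in\Eig(X,G)$. Fix $g\in G$. Since $\Gamma_G\subseteq \Gamma\cap\Gamma^g$ and $\Eig(X,G)$ is upward closed, we get $\Gamma\cap\Gamma^g\in\Eig(X,G)$. Settledness then gives $\Gamma^g=\Gamma$. As $g$ was arbitrary, $\Gamma$ is normal. (One could instead use intersection closedness together with conjugation invariance of $\Eig(X,G)$; this gives the same conclusion directly.)

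For the converse, assume every settled eigenvalue is normal. I will verify condition (iv) of Proposition \ref{pro:odometerCharacterizationEIG}, namely that $\Eig(X,G)$ is core-stable. Let $\Gamma\in\Eig(X,G)$. By Lemma \ref{lem:creatingEsettledSets} there is a finite $I\subseteq G$ such that $\Gamma_I$ is settled, and in particular $\Gamma_I\in\Eig(X,G)$. By assumption $\Gamma_I$ is normal. It is contained in $\Gamma^h$ for every $h\in I$; if $I$ is nonempty, we may fix such an $h$.

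The key step is to pass from $\Gamma^h$ back to $\Gamma$. Since $\Gamma_I$ is normal, conjugating by $h^{-1}$ gives $\Gamma_I=(\Gamma_I)^{h^{-1}}\subseteq(\Gamma^h)^{h^{-1}}=\Gamma$. A normal subgroup contained in $\Gamma$ lies in the core, so $\Gamma_I\subseteq\Gamma_G$. Upward closedness then gives $\Gamma_G\in\Eig(X,G)$. Hence $\Eig(X,G)$ is core-stable, and $(X,G)$ is an odometer.

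The only point needing care is the nonemptiness of $I$. In the proof of Lemma \ref{lem:creatingEsettledSets}, $I$ is chosen as a maximal element of a family containing $\{e_G\}$, so it may be taken nonempty, and indeed to contain $e_G$. In that case $\Gamma_I\subseteq\Gamma$ directly, and no conjugation trick is needed. I expect no real obstacle beyond this bookkeeping.
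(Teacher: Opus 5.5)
Your proof is correct and follows essentially the paper's route: settledness applied to $\Gamma\cap\Gamma^g\in\Eig(X,G)$ for the forward direction, and Lemma~\ref{lem:creatingEsettledSets} plus core-stability for the converse. The paper gets $\Gamma\cap\Gamma^g\in\Eig(X,G)$ from intersection-closedness and conjugation invariance (your parenthetical alternative), and in the converse writes $\Lambda_G=(\Lambda_I)_G=\Lambda_I$; that equality tacitly needs $I\neq\emptyset$, so your explicit check that $I$ is nonempty fills a detail the paper leaves implicit.
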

\begin{proof}
    Assume that $(X,G)$ is an odometer and consider a settled $\Gamma\in \Eig(X,G)$. 
    To show that $\Gamma$ is normal let $g\in G$. 
    From $\Eig(X,G)$ being conjugation invariant we observe $\Gamma^g\in \Eig(X,G)$. 
    Since $(X,G)$ is an odometer we know from Proposition \ref{pro:odometerCharacterizationEIG} that $\Eig(X,G)$ is intersection closed and hence $\Gamma\cap \Gamma^g\in \Eig(X,G)$. 
    By the settledness of $\Gamma$, we have $\Gamma=\Gamma^g$.
    This shows $\Gamma$ to be normal. 

    For the converse assume that all settled $\Gamma\in \Eig(X,G)$ are normal. In order to show that $(X,G)$ is an odometer we use Proposition \ref{pro:odometerCharacterizationEIG} and show that $\Eig(X,G)$ is core-stable. 
    For this consider $\Lambda\in \Eig(X,G)$. 
    From Lemma \ref{lem:creatingEsettledSets} we observe the existence of $I\subseteq G$ finite, such that $\Lambda_I\in \Eig(X,G)$ is settled and hence normal. In particular, we have $(\Lambda_I)^g=\Lambda_I$ for all $g\in G$ and hence
    \[\Lambda_G=(\Lambda_I)_G=\Lambda_I\in \Eig(X,G).\]
    This shows that $\Eig(X,G)$ is core-stable. 
\end{proof}

\subsection{Proof of Theorem \ref{the:hyperspaceCharacterizationOdometerViaQuasifactors}}
\label{subsec:hyperspacesOfSubodometers_ProofOfTheoremCOVQ}
In order to impose the condition $[\Gamma\colon \Lambda]\geq 3$ from Lemma \ref{lem:quasiFactorConstruction} we will use the following. 

\begin{lemma}
\label{lem:indicesInInfiniteEigensets}
   Let $(X,G)$ be an infinite subodometer. 
   For $\Gamma\in \Eig(X,G)$ there exists $\Lambda\in \Eig(X,G)$ with $\Lambda\subsetneq \Gamma$. 
\end{lemma}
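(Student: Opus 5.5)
Fix $\Gamma\in \Eig(X,G)$ and a point $x\in X$. The idea is to pass to the filter $\Eig_x(X,G)$, which generates $(X,G)$ by \cite[Theorem~4.7]{cortez2026minimal}. We then argue by contradiction: if every eigenvalue contained in $\Gamma$ were equal to $\Gamma$, then $X$ would have to be finite.

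\textbf{Step 1: a conjugate of $\Gamma$ in the filter.} By \cite[Theorem 4.7 and Proposition~5.13]{cortez2026minimal} we have $\eigenhull{\Eig_x(X,G)}=\Eig(X,G)$. Hence there exist $\Gamma'\in \Eig_x(X,G)$ and $g\in G$ with $\Gamma'^g\subseteq \Gamma$. Since $\Eig_x(X,G)$ is upward closed in $\subfin(G)$, we may replace $\Gamma'$ by $\Gamma^{g^{-1}}\supseteq\Gamma'$. It suffices to find $\Lambda'\in\Eig(X,G)$ with $\Lambda'\subsetneq \Gamma^{g^{-1}}$. Indeed, then $\Lambda:=\Lambda'^{g}\in\Eig(X,G)$, by conjugation invariance of $\Eig(X,G)$, and $\Lambda\subsetneq\Gamma$. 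So we may assume w.l.o.g.\ that $\Gamma\in\Eig_x(X,G)$.

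\textbf{Step 2: the filter cannot stabilise.} Suppose every $\Lambda\in\Eig_x(X,G)$ with $\Lambda\subseteq\Gamma$ equals $\Gamma$. Since $\Eig_x(X,G)$ is a filter, for any $\Lambda\in \Eig_x(X,G)$ we have $\Lambda\cap\Gamma\in\Eig_x(X,G)$. This gives $\Lambda\cap\Gamma=\Gamma$, i.e.\ $\Gamma\subseteq\Lambda$. So $\Gamma$ is the minimum of the scale $\Eig_x(X,G)$. In the inverse limit $\varprojlim_{\Lambda\in \Eig_x(X,G)}G/\Lambda$, every coordinate $x_\Lambda$ is then determined by $x_\Gamma$ via $\pi^\Gamma_\Lambda$. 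Thus the projection to $G/\Gamma$ is injective, and the $\Eig_x(X,G)$-subodometer is conjugate to the finite action $G/\Gamma$. Since $(X,G)$ is generated by $\Eig_x(X,G)$, $X$ would be finite, a contradiction.

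Hence there is $\Lambda\in\Eig_x(X,G)\subseteq\Eig(X,G)$ with $\Lambda\subseteq\Gamma$ and $\Lambda\neq\Gamma$, as required. The only delicate point is Step 1, the reduction from an arbitrary eigenvalue to one in the filter at $x$. It relies on the upward closedness of $\Eig_x(X,G)$ within $\subfin(G)$ and on the conjugation invariance of $\Eig(X,G)$, both of which are recalled in the preliminaries.

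As an alternative to Step 2, take the factor map $\pi\colon X\to G/\Gamma$ with $\pi(x)=\Gamma$. Since $X$ is infinite, some fibre contains two points $x_1\neq x_2$. By Proposition \ref{pro:characterizationEquicontinuousStone} there is an invariant clopen equivalence relation $\rho$ separating $x_1$ and $x_2$. Then $\rho\cap R(\pi)$ gives a finite factor $Y$ strictly finer than $G/\Gamma$, and $\Lambda:=G_0(y)$ for $y$ over $\Gamma$ satisfies $\Lambda\subseteq\Gamma$. Comparing cardinalities via Remark \ref{rem:fundamentalDomainsComposition}, $[G\colon\Lambda]=|Y|>[G\colon\Gamma]$, so $\Lambda\neq\Gamma$.
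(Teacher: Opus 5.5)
Your main argument is correct and is essentially the paper's: if no eigenvalue lies strictly below $\Gamma$, then $\Gamma$ is the minimum of the filter $\Eig_x(X,G)$, which generates $(X,G)$, so $X$ would be conjugate to the finite action $G/\Gamma$. The paper avoids your Step~1 detour by choosing $x\in\pi^{-1}(\Gamma)$ for a factor map $\pi\colon X\to G/\Gamma$, which gives $\Gamma\in\Eig_x(X,G)$ directly; your alternative argument via a separating invariant clopen equivalence relation is also correct and does not need the inverse-limit representation at all.
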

\begin{proof}
    From $\Gamma\in \Eig(X,G)$ we observe that there exists a factor map $\pi\colon X\to G/\Gamma$. 
    Choose $x\in \pi^{-1}(\Gamma)$. 
    Clearly, we have $\Gamma\in \Eig_x(X,G)$. 

    To achieve the contraposition assume that there exists no $\Lambda\in \Eig(X,G)$ with $\Lambda\subsetneq \Gamma$. 
    Since $\Eig_x(X,G)$ is a filter for $\Lambda\in \Eig_x(X,G)$ 
    we have $\Gamma\cap\Lambda\in \Eig(X,G)$. 
    It follows that $\Gamma=\Gamma\cap \Lambda$ and hence $\Gamma\subseteq \Lambda$. 
    This establishes $\Gamma$ as a minimal element of $\Eig_x(X,G)$ w.r.t.\ set inclusion. 
    Recall that $\Eig_x(X,G)$ is a scale that generates $(X,G)$.
    Thus, $(X,G)$ is conjugated to the finite subodometer $G/\Gamma$, a contradiction. 
\end{proof}

\begin{proof}[Proof of Theorem \ref{the:hyperspaceCharacterizationOdometerViaQuasifactors}:]
    Any odometer is regular. 
    Furthermore, it follows from \cite[Corollary~11.21]{auslander1988minimal} that any quasifactor of a regular subodometer is also a factor. 
    It thus remains to show that any infinite subodometer that is not an odometer allows for a finite quasifactor that is not a factor.

    Consider a subodometer $(X,G)$ that is not an odometer. 
    Since $(X,G)$ is not an odometer we observe from Proposition \ref{pro:filteredEigensetsViaEsettled} that there exists $\Gamma\in \Eig(X,G)$ that is settled and not normal. 
    In particular, there exists $g\in G$ with $\Gamma^g\neq \Gamma$. 
    Since $(X,G)$ is infinite an inductive application of Lemma \ref{lem:indicesInInfiniteEigensets} allows to choose $\Lambda\in \Eig(X,G)$ with $\Lambda\subseteq \Gamma$ and $[\Gamma\colon \Lambda]\geq 3$. 
    It follows from Lemma \ref{lem:quasiFactorConstruction} that there exists a finite quasifactor of $(X,G)$ which is not a factor. 
\end{proof}

\subsection{Quasifactors of small subodometers}
\label{subsec:hyperspacesOfSubodometers_QuasifactorsOfSmallOdometers}
We next present that for a finite subodometer that is not an odometer we can have that all quasifactors are factors. 

\begin{proposition}
\label{pro:verySmallPhaseSpaces}
    Let $(X,G)$ be a minimal action with $|X|\leq 3$. Then any quasifactor of $(X,G)$ is a factor.         
\end{proposition}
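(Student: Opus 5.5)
Since $X$ is finite with $|X|\leq 3$, every quasifactor is a finite minimal action, i.e.\ of the form $G/\Lambda$ with $\Lambda=G_0(A)$ for some $A\in\hyper(X)$ in the quasifactor. By the discussion in Subsection~\ref{subsubsec:prelims_subodometers_FiniteSubodometers}, $G/G_0(A)$ is a factor of $X$ as soon as there is $x\in X$ with $G_0(x)\subseteq G_0(A)$. So the plan is a case analysis on $|A|$.

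If $|A|=1$, the quasifactor is (a copy of) $X$ itself, hence a factor. If $A=X$, then $G_0(A)=G$ and the quasifactor is the one-point action, which is trivially a factor. So for $|X|\leq 2$ we are done, and only $|X|=3$ with $|A|=2$ remains.

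For $|X|=3$ and $|A|=2$, write $A=X\setminus\{x\}$. Since each $g\in G$ acts as a bijection of $X$, we have $g.A=A$ if and only if $g.x=x$. Hence $G_0(A)=G_0(x)$, and there is a factor map $X\to \overline{G.A}$ sending $x\mapsto A$. Concretely this map is $y\mapsto X\setminus\{y\}$, which is a conjugacy.

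The argument is almost immediate. The only real step is noticing that complements in a three-point set reduce the $|A|=2$ case to singletons, via $G_0(A)=G_0(X\setminus A)$.
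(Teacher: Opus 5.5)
Your proof is correct and follows essentially the same route as the paper: a case split by the cardinality of the closed set, where singletons give a copy of $X$, the full set gives the trivial action, and for $|X|=3$ the two-element sets are handled by the complement conjugacy $x\mapsto X\setminus\{x\}$. Your stabilizer computation $G_0(X\setminus\{x\})=G_0(x)$ simply makes explicit why that map is a conjugacy.
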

\begin{proof}
    Denote $\mathcal{F}_k(X)$ for the set of all finite subsets of $X$ of cardinality $k$. We observe that $(\mathcal{F}_1(X),G)$ is conjugated to $(X,G)$ and that $(\mathcal{F}_{|X|}(X),G)$ is conjugated to the one-point system. 
    For $|X|\leq 2$ we have found all quasifactors to be factors. For $|X|=3$ we observe that the remaining part of $\hyper(X)$ yields the quasifactor $(\mathcal{F}_2(X),G)$, which is conjugated to $(X,G)$ via $X\ni x\mapsto X\setminus \{x\}\in \hyper(X)$. 
\end{proof}

Note that any finite index subgroup of index $2$ is normal. Thus any minimal action on a phase space $X$ with $|X|\leq 2$ is an odometer. 
The following examples demonstrate that there exist subodometers ($X,G$), not being odometers with $|X|=3$. 

\begin{example}
    Let $H$ be a group and consider $G:=H\times S_3$ and $\Gamma:=H\times \grouphull{(12)}$. 
    The subodometer $(G/\Gamma,G)$ satisfies $|G/\Gamma|=3$. 
    Since $\Gamma$ is not normal it is not an odometer. 
\end{example}

\begin{example}
    Let $F_2=\grouphull{\{g_1,g_2\}}$ be the free group on two generators. 
    Consider the group homomorphism $\phi\colon F_2\to S_3$ with $\phi(g_1)=(123)$ and $\phi(g_2)=(12)$. 
    Since $\grouphull{(12)}$ is not normal and $\phi$ is surjective, $\Gamma:=\phi^{-1}(\grouphull{(12)})$ is a non-normal finite index subgroup of $F_2$ and hence the subodometer $(F_2/\Gamma,F_2)$ is not an odometer.
    Nevertheless, it satisfies $|F_2/\Gamma|=|S_3/\grouphull{(12)}|=3$. 
\end{example}

\section{Induced dynamics on the regular Borel probability measures}
\label{sec:inducedDynamicsOnTheRegularBorelProbabilityMeasures}
For a minimal action $(X,G)$ we call a minimal component of $(\mathcal{M}(X),G)$ a \linebreak \emph{$\mathcal{M}$-quasifactor}. 
If $(X,G)$ is a subodometer, then we know from Corollary \ref{cor:InducedDynamicsDecompose} that $(\mathcal{M}(X),G)$ decomposes into $\mathcal{M}$-quasifactors, all being subodometers. 
We will next show that the $\mathcal{M}$-quasifactors exhibit similar properties to the $\hyper$-quasifactors.

\subsection{Factors as $\mathcal{M}$-quasifactors}
\label{subsec:inducedDynamicsOnTheRegularBorelProbabilityMeasures_FactorsAsMQuasifactors}
Next, we show that any factor of a subodometer is a $\mathcal{M}$-quasifactor, i.e.\ the analog of Proposition \ref{pro:hyperspacefactorsAsQuasifactors}. 
For this we need the following. Recall that we denote $G_\epsilon(x):=\{g\in G;\, (g.x,x)\in \epsilon\}$ 
for an action $(X,G)$, $x\in X$ and $\epsilon\in \mathbb{U}_X$. 

\begin{lemma}
\label{lem:measureConstructionAndControl}
    Let $(X,G)$ be a subodometer and $S\subseteq \Eig(X,G)$ be a scale. 
    There exist nets $(\mu_\Gamma)_{\Gamma\in S}$ in $\mathcal{M}(X)$ and $(\eta_\Gamma)_{\Gamma\in S}$ in $\mathbb{U}_{\mathcal{M}(X)}$, such that for 
    $\Gamma,\Lambda\in S$ with $\Lambda\subseteq \Gamma$ we have 
    $\Gamma=G_0(\mu_\Gamma)=G_{\eta_\Gamma}(\mu_\Lambda)$. 
\end{lemma}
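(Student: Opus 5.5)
The plan is to build the measures from normalized restrictions of the unique invariant measure $\nu\in\mathcal{M}(X)$ to clopen fibres of the finite factors $G/\Gamma$, and to take $\eta_\Gamma$ to be the entourage that tests the indicator function of such a fibre. For this to work, the factor maps $X\to G/\Gamma$ for different $\Gamma\in S$ must be mutually compatible. Since $S\subseteq\Eig(X,G)$ is only assumed to be a scale, and the factor maps are not unique, the first step is to fix a base point $x\in X$ with $S\subseteq \Eig_x(X,G)$. I expect finding this point to be the main obstacle.

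\textbf{Finding the base point.} For $\Gamma\in S$ let $C_\Gamma:=\{y\in X;\, \Gamma\in\Eig_y(X,G)\}$.
\begin{itemize}
\item $C_\Gamma$ is nonempty and closed. Fix one factor map $\pi\colon X\to G/\Gamma$. Any other factor map $X\to G/\Gamma$ is $\iota\circ\pi$ for an automorphism $\iota$ of the finite action $G/\Gamma$, and there are only finitely many such $\iota$. Hence $C_\Gamma$ is a finite union of sets $(\iota\circ\pi)^{-1}(\Gamma)$, each of which is clopen.
\item If $\Lambda\subseteq\Gamma$, then $C_\Lambda\subseteq C_\Gamma$, by composing with $\pi^\Lambda_\Gamma$.
\end{itemize}
Since $S$ is a scale, the family $(C_\Gamma)_{\Gamma\in S}$ has the finite intersection property. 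By compactness there is $x\in\bigcap_{\Gamma\in S}C_\Gamma$.

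\textbf{Compatible factor maps and fibres.} For each $\Gamma\in S$ let $\pi_\Gamma\colon X\to G/\Gamma$ be a factor map with $\pi_\Gamma(x)=\Gamma$. A factor map onto $G/\Gamma$ is determined by its value at $x$: it is fixed on the dense orbit $G.x$ and $G/\Gamma$ is discrete. Therefore $\pi_\Gamma$ is unique, and for $\Lambda\subseteq\Gamma$ in $S$ we get $\pi_\Gamma=\pi^\Lambda_\Gamma\circ\pi_\Lambda$. Put $U_\Gamma:=\pi_\Gamma^{-1}(\Gamma)$. Then $U_\Gamma$ is clopen, the sets $\{h.U_\Gamma;\, h\in G\}$ partition $X$, $h.U_\Gamma=U_\Gamma$ if and only if $h\in\Gamma$, and $U_\Lambda\subseteq U_\Gamma$ whenever $\Lambda\subseteq\Gamma$. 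By invariance of $\nu$ and the fact that the $[G\colon\Gamma]$ translates partition $X$, we have $\nu(U_\Gamma)=1/[G\colon\Gamma]>0$.

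\textbf{Definition and verification.} Define
\[\mu_\Gamma(B):=\frac{\nu(B\cap U_\Gamma)}{\nu(U_\Gamma)},\qquad \eta_\Gamma:=\eta_{(\{1_{U_\Gamma}\},1/2)}.\]
Here $1_{U_\Gamma}\in C(X)$ because $U_\Gamma$ is clopen. Using the invariance of $\nu$, $g.\mu_\Gamma$ is the normalized restriction of $\nu$ to $g.U_\Gamma$ (or to $g^{-1}.U_\Gamma$, depending on the convention; the argument is symmetric). Since distinct translates of $U_\Gamma$ are disjoint sets of positive measure, $g.\mu_\Gamma=\mu_\Gamma$ if and only if $g\in\Gamma$, so $G_0(\mu_\Gamma)=\Gamma$.

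Now let $\Lambda\subseteq\Gamma$ in $S$. The measure $g.\mu_\Lambda$ is supported on $g.U_\Lambda\subseteq g.U_\Gamma$. Hence $g.\mu_\Lambda(1_{U_\Gamma})$ equals $1$ if $g\in\Gamma$ and $0$ otherwise, while $\mu_\Lambda(1_{U_\Gamma})=1$. It follows that $(g.\mu_\Lambda,\mu_\Lambda)\in\eta_\Gamma$ exactly when $g\in\Gamma$, that is, $G_{\eta_\Gamma}(\mu_\Lambda)=\Gamma$.
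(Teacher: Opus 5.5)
Your proof is correct in substance. It takes a different route from the paper only in how the base point is found. The paper passes to the $S$-subodometer $(Y,G)$. Since $\eigenhull{S}\subseteq\Eig(X,G)$, Proposition~\ref{pro:EIGfactorsAndConjugacyInvariant}(i) gives a factor map $\pi\colon X\to Y$. Any $x_0\in\pi^{-1}\bigl((\Gamma)_{\Gamma\in S}\bigr)$ then comes with compatible maps $\pi_\Gamma$, obtained by composing $\pi$ with the coordinate projections.

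You instead produce a point $x$ with $S\subseteq\Eig_x(X,G)$ directly, by a finite-intersection argument on the sets $C_\Gamma$. Your route is self-contained, using only compactness and the scale property. The paper's route is shorter given the eigenvalue criterion for factors.

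From there the two proofs coincide: both use the normalized restriction of the invariant measure to the fibre $U_\Gamma$ through the base point. One small difference: the paper tests all fibre indicators with a tolerance below $1$, while you test only $1_{U_\Gamma}$ with tolerance $1/2$. That suffices, since the paper's argument also only uses that $g.\mu_\Lambda(1_{U_\Gamma})\in\{0,1\}$ and that it equals $1$ exactly when $g\in\Gamma$.

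One justification in your first step is false, although the claim it supports is true. Two factor maps $X\to G/\Gamma$ need not differ by an automorphism of $(G/\Gamma,G)$. For example, let $G=S_3$ act on $X=S_3$ by left multiplication and set $\Gamma=\grouphull{(12)}$. Both $h\mapsto h\Gamma$ and $h\mapsto h(13)\Gamma$ are factor maps. An automorphism $\iota$ with $\iota(\Gamma)=(13)\Gamma$ would require $\Gamma=G_0(\Gamma)=G_0((13)\Gamma)=\Gamma^{(13)}$, which fails.

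The fix is the observation you already use in your next paragraph. A factor map from a minimal action onto the finite discrete space $G/\Gamma$ is determined by its value at a single point. So there are at most $[G\colon\Gamma]$ such maps, and $C_\Gamma$ is the finite union of the clopen sets $\pi^{-1}(\Gamma)$ over them. With this correction the compactness argument, and hence the whole proof, goes through.
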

\begin{proof}
    Denote $\theta$ for the unique invariant regular Borel probability measure on $X$ \cite[Chapter~7]{auslander1988minimal}. 
    Denote $(Y,G)$ for the $S$-subodometer. Since $S\subseteq \Eig(X,G)$ we have $\Eig(Y,G)\subseteq \Eig(X,G)$ and hence there exists a factor map $\pi\colon X\to Y$. 
    Denote $y_0:=(\Gamma)_{\Gamma\in S}$ and choose $x_0\in \pi^{-1}(y_0)$. 
    For $\Gamma\in S$ there exists a unique factor map $\phi_\Gamma\colon Y\to G/\Gamma$ with $\phi_\Gamma(y_0)=\Gamma$. 
    Denote $\pi_\Gamma:=\phi_\Gamma\circ \pi\colon X\to/\Gamma$ and note that $\pi_\Gamma(x_0)=\Gamma$. 
    Furthermore, we have $R(\pi_\Lambda)\subseteq R(\pi_\Gamma)$ for $\Gamma,\Lambda\in S$ with $\Lambda\subseteq \Gamma$. 
    
    For $\Gamma\in S$ we consider the finite equivalence relation $R(\pi_\Gamma)$. Imposing $x_0\in A_1^\Gamma$ we denote $\{A_1^\Gamma,\dots, A_n^\Gamma\}$ for the induced equivalence classes.  
    Let $f_k^\Gamma:=\chi_{A_k^\Gamma}$ be the characteristic function of $A_k^\Gamma$. 
    With $\mathfrak{f}^\Gamma:=\{f_1^\Gamma,\dots, f_n^\Gamma\}$ and $\epsilon^\Gamma:=(2|\Gamma|)^{-1}$
    we denote $\eta_\Gamma:=\eta_{(\mathfrak{f}^\Gamma,\epsilon^\Gamma)}$. 
    Note that $\theta(A_1^\Gamma)=1/|\Gamma|>0$. 
    Thus, 
    \[\mu_\Gamma:=\theta((\cdot)\cap A_1^\Gamma)/\theta(A_1^\Gamma)\in \mathcal{M}(X).\]
    It is straightforward to see that $G_0(\mu_\Gamma)=\Gamma$. 

    Consider $\Gamma,\Lambda\in S$ with $\Lambda\subseteq \Gamma$.
    In order to show that $G_{\eta_\Gamma}(\mu_\Lambda)=\Gamma$ we first show that $\Gamma\subseteq G_{\eta_\Gamma}(\mu_\Lambda)$. 
    Let $g\in \Gamma=G_0(\mu_\Gamma)$ and observe $g.\mu_\Gamma=\mu_\Gamma$. 
    Furthermore, for $f\in \mathfrak{f}^\Gamma$ we have 
    $\mu_\Lambda(f)=\mu_\Gamma(f)$. Since $g^*f\in \mathfrak{f}^\Gamma$ we observe
    \begin{align*}
        |\mu_\Lambda(f)- g.\mu_\Lambda(f)|
        = |\mu_\Lambda(f)- \mu_\Lambda(g^*f)|
        = |\mu_\Gamma(f)- g.\mu_\Gamma(f)|
        = 0 \leq \epsilon^\Gamma. 
    \end{align*}
    This shows $(\mu_\Lambda,g.\mu_\Lambda)\in \eta_\Gamma$ and hence $g\in G_{\eta_\Gamma}(\mu_\Lambda)$. 

    For the converse consider $g\in G_{\eta_\Gamma}(\mu_\Lambda)$. 
    From $A_1^\Lambda\subseteq A_1^\Gamma$ we observe $\mu_\Lambda(f_1^\Gamma)=1$. 
    Furthermore, from $(\mu_\Lambda, g.\mu_\Lambda)\in \eta_\Gamma$ we know
    \[|\mu_\Lambda (f_1^\Gamma)-g.\mu_\Lambda(f_1^\Gamma)|\leq\epsilon^\Gamma<1.\]
    Since $g^*f_1^\Gamma\in \mathfrak{f}^\Gamma$ we have $g.\mu_\Lambda(f_1^\Gamma)=\mu_\Lambda(g^*f_1^\Gamma)\in \{0,1\}$. Combining these observations we get
    $\theta(A_1^\Gamma\cap g.A_1^\Lambda)/\theta(A_1^\Lambda)=g.\mu_\Lambda(f_1^\Gamma)=1$ and hence $A_1^\Gamma\cap g.A_1^\Lambda\neq \emptyset$. 
    It follows from $R(\pi_\Lambda)\subseteq R(\pi_\Gamma)$ that 
    $g.x_0\in g.A_1^\Lambda \subseteq A_1^\Gamma$. 
    We thus observe 
    \[g\Gamma 
    =g.\pi_\Gamma(x_0)
    =\pi_\Gamma(g.x_0)
    \subseteq \pi_\Gamma(A_1^\Gamma)
    =\Gamma,
    \]
    which implies $g\in \Gamma$. 
    This shows $G_{\eta_\Gamma}(\mu_\Lambda)\subseteq\Gamma$. 
\end{proof}

\begin{lemma}\cite[Lemma~4.10]{cortez2026minimal}
\label{lem:factorMapControl}
    Let $(X,G)$ and $(Y,G)$ be equicontinuous minimal actions and consider $x\in X$ and $y\in Y$. 
    There exists a factor map $\pi\colon X\to Y$ with $\pi(x)=y$ if and only if for any $\epsilon\in \mathbb{U}_Y$ there exists $\delta\in \mathbb{U}_X$ such that 
    $G_\delta(x)\subseteq G_\epsilon(y)$. 
\end{lemma}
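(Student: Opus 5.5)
The plan is to prove both implications directly. For the nontrivial direction, I will build the factor map on the orbit $G.x$ and extend it by uniform continuity. Throughout I use that for equicontinuous actions the uniformities $\mathbb{U}_X$ and $\mathbb{U}_Y$ have bases of invariant entourages (Subsection~\ref{subsubsec:prelims_actions_Equicontinuity}). I may also pass to symmetric entourages.

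\emph{Necessity.} Let $\pi\colon X\to Y$ be a factor map with $\pi(x)=y$ and let $\epsilon\in\mathbb{U}_Y$. Since $X$ is compact, $\pi$ is uniformly continuous, so there is $\delta\in\mathbb{U}_X$ with $(\pi\times\pi)(\delta)\subseteq\epsilon$. For $g\in G_\delta(x)$ we have $(g.x,x)\in\delta$. Hence $(g.y,y)=(\pi(g.x),\pi(x))\in\epsilon$, that is, $G_\delta(x)\subseteq G_\epsilon(y)$. This direction uses neither minimality nor equicontinuity.

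\emph{Sufficiency, step 1 (definition on the orbit).} Define $\pi_0\colon G.x\to Y$ by $\pi_0(g.x):=g.y$. For well-definedness, suppose $g.x=h.x$. Then $h^{-1}g\in G_0(x)\subseteq G_\delta(x)$ for every $\delta\in\mathbb{U}_X$. The hypothesis therefore gives $h^{-1}g\in G_\epsilon(y)$ for every $\epsilon\in\mathbb{U}_Y$. Since $Y$ is Hausdorff, $\bigcap\mathbb{U}_Y=\Delta_Y$, so $h^{-1}g.y=y$, i.e.\ $g.y=h.y$.

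\emph{Sufficiency, step 2 (uniform continuity; the main point).} Fix $\epsilon\in\mathbb{U}_Y$ and choose an invariant $\epsilon'\in\mathbb{U}_Y$ with $\epsilon'\subseteq\epsilon$. The hypothesis gives $\delta\in\mathbb{U}_X$ with $G_\delta(x)\subseteq G_{\epsilon'}(y)$. Choose an invariant $\delta'\in\mathbb{U}_X$ with $\delta'\subseteq\delta$.

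Now let $(g.x,h.x)\in\delta'$. By invariance of $\delta'$, applying $h^{-1}$ gives $(h^{-1}g.x,x)\in\delta'\subseteq\delta$, so $h^{-1}g\in G_\delta(x)\subseteq G_{\epsilon'}(y)$. Thus $(h^{-1}g.y,y)\in\epsilon'$. Invariance of $\epsilon'$, applying $h$, yields $(g.y,h.y)\in\epsilon'\subseteq\epsilon$. Hence $\pi_0$ is uniformly continuous on $G.x$.

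This step is where equicontinuity is essential: the hypothesis only controls returns to the base points $x$ and $y$, and invariance of the entourages is what transports this control to arbitrary pairs of orbit points.

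\emph{Sufficiency, step 3 (extension).} By minimality $G.x$ is dense in $X$, and $Y$ is compact, hence complete. So $\pi_0$ extends uniquely to a continuous map $\pi\colon X\to Y$ (e.g.\ \cite[Chapter 6]{kelley2017general}), and $\pi(x)=y$. For each $g\in G$, the maps $\pi\circ g$ and $g\circ\pi$ are continuous and agree on the dense set $G.x$, so $\pi$ is equivariant. Finally, $\pi(X)$ is compact and contains $G.y$, which is dense by minimality of $Y$. Hence $\pi$ is surjective and therefore a factor map with $\pi(x)=y$.
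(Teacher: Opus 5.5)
Your proof is correct and complete. Necessity is just the uniform continuity of $\pi$ on the compact space $X$. For sufficiency, the orbit map $g.x\mapsto g.y$ is well defined, and it is uniformly continuous because invariant entourages transport the return-time condition at the base points $x,y$ to arbitrary pairs $(g.x,h.x)$; it then extends, by density of $G.x$ and completeness of $Y$, to a continuous equivariant map whose closed image contains the dense orbit $G.y$.

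The paper gives no proof of this lemma and simply imports it from \cite{cortez2026minimal}, so there is no in-text argument to compare against. Your argument is self-contained and also covers the way the lemma is applied in Theorem~\ref{the:factorsAreMQuasifactors}, with $Y=G/\Gamma$ and $\epsilon=\Delta_{G/\Gamma}$.
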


\begin{theorem}
\label{the:factorsAreMQuasifactors}
    Let $(X,G)$ be a subodometer. 
    Any factor of $(X,G)$ is conjugated to a $\mathcal{M}$-quasifactor of $(X,G)$.  
\end{theorem}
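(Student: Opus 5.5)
Let $(Y,G)$ be a factor of $(X,G)$. By Proposition \ref{pro:EIGfactorsAndConjugacyInvariant}, $(Y,G)$ is a subodometer. For a point $y\in Y$, it is generated by the scale $S:=\Eig_y(Y,G)$. Since $Y$ is a factor of $X$, we have $S\subseteq \Eig(Y,G)\subseteq \Eig(X,G)$. This puts us in the setting of Lemma \ref{lem:measureConstructionAndControl}, which provides nets $(\mu_\Gamma)_{\Gamma\in S}$ in $\mathcal{M}(X)$ and $(\eta_\Gamma)_{\Gamma\in S}$ in $\mathbb{U}_{\mathcal{M}(X)}$ with $\Gamma=G_0(\mu_\Gamma)=G_{\eta_\Gamma}(\mu_\Lambda)$ whenever $\Lambda\subseteq\Gamma$ in $S$. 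The plan is to take a cluster point $\mu$ of the net $(\mu_\Gamma)_{\Gamma\in S}$ (indexed by $S$ directed by reverse inclusion), which exists by compactness of $\mathcal{M}(X)$. We then show that the $\mathcal{M}$-quasifactor $\overline{G.\mu}$ is conjugate to $Y$. By Corollary \ref{cor:InducedDynamicsDecompose}, $\overline{G.\mu}$ is a minimal subodometer. By coalescence it suffices to produce factor maps in both directions, and for this I would use Lemma \ref{lem:factorMapControl} with base points $\mu$ and the point $y_0:=(\Gamma)_{\Gamma\in S}$ of the $S$-subodometer, which is conjugate to $Y$.

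First I would pass from the net to the limit. Fix $\Gamma\in S$. The entourage $\eta_\Gamma=\eta_{(\mathfrak{f}^\Gamma,\epsilon^\Gamma)}$ is defined by finitely many continuous (characteristic functions of clopen) functions. Moreover, for $\Lambda\subseteq\Gamma$ we have $\mu_\Lambda(f)=\mu_\Gamma(f)$ for $f\in\mathfrak{f}^\Gamma$, as observed in the proof of Lemma \ref{lem:measureConstructionAndControl}. Hence the values $\mu_\Lambda(g^*f)$ for $f\in \mathfrak f^\Gamma$ are eventually constant, equal to $\mu_\Gamma(g^*f)$, using that $g^*f\in\mathfrak{f}^\Gamma$. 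Passing to the cluster point gives $\mu(g^*f)=\mu_\Gamma(g^*f)$ for all $g\in G$ and $f\in\mathfrak f^\Gamma$. Consequently $G_{\eta_\Gamma}(\mu)=G_{\eta_\Gamma}(\mu_\Gamma)=\Gamma$, where the last equality is the case $\Lambda=\Gamma$ of the lemma.

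Next I would build the map $\overline{G.\mu}\to Y$. The entourages of $Y$ are refined by the fibre relations of $Y\to G/\Gamma$, $\Gamma\in S$, and these satisfy $G_{\epsilon_\Gamma}(y_0)=\Gamma$. So for each $\epsilon\in\mathbb{U}_Y$ we pick $\Gamma\in S$ with $\epsilon_\Gamma\subseteq\epsilon$ and take $\delta:=\eta_\Gamma$. This gives $G_\delta(\mu)=\Gamma\subseteq G_\epsilon(y_0)$, and Lemma \ref{lem:factorMapControl} yields the factor map. For the converse map $Y\to\overline{G.\mu}$ we must show that for every $\delta\in\mathbb{U}_{\mathcal{M}(X)}$ some $\Gamma\in S$ satisfies $\Gamma\subseteq G_\delta(\mu)$. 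Restricting to a base, take $\delta=\eta_{(\mathfrak f,\epsilon)}$ with $\mathfrak{f}$ finite. I expect this to be the main obstacle, since arbitrary test functions must be controlled. I would first try to show that $G_0(\mu)\supseteq \bigcap_{\Gamma\in S}\Gamma$ holds in a uniform way. Concretely, if $g\in\Gamma$, then $g.\mu$ agrees with $\mu$ on the clopen partition at level $\Gamma$. This by itself does not control $\mu(f)$ for general $f$.

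A cleaner route for the converse is to use the uniform regular recurrence of $\mathcal{M}(X)$ (Theorem \ref{the:HMinheritanceUniformRegularRecurrence}) together with Lemma \ref{lem:regularRecurrenceAndEigensets} applied to the subodometer $\overline{G.\mu}$. Every $\Lambda\in\Eig_\mu(\overline{G.\mu},G)$ contains $G_{\delta}(\mu)$ for $\delta$ the fibre relation. Thus it suffices to show $\Eig(\overline{G.\mu},G)\subseteq\Eig(Y,G)=\eigenhull{S}$. Given a factor map $\overline{G.\mu}\to G/\Lambda$ sending $\mu$ to $\Lambda$, I would use the eventual stabilization above to show that some $\mu_\Gamma$ with $G_0(\mu_\Gamma)=\Gamma$ is close enough to $\mu$ that $\Gamma\subseteq\Lambda$. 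If that approximation fails, I would fall back on checking directly that $\mu$ equals the normalized restriction of $\theta$ to $\bigcap_\Gamma A_1^\Gamma$, computed on cylinder functions, and read off $G_\delta(\mu)$ from it. Finally Proposition \ref{pro:EIGfactorsAndConjugacyInvariant}(ii) gives the conjugacy.
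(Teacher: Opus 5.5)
Your first half is correct and takes a slightly different route from the paper. Eventual constancy of $\mu_\Lambda$ on the finite, $G$-invariant family $\mathfrak{f}^\Gamma$ gives $G_{\eta_\Gamma}(\mu)=\Gamma$ exactly, for a mere cluster point $\mu$. The paper instead passes to a convergent net and uses an invariant $\eta$ with $\eta\eta\eta\subseteq\eta_\Gamma$ to get $G_\eta(\mu)\subseteq\Gamma$. Either way Lemma~\ref{lem:factorMapControl} yields the factor map $\overline{G.\mu}\to Y$.

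The gap is the converse direction, which you flag as the main obstacle but do not close. You need: for every entourage $\epsilon$ of $\mathcal{M}(X)$ there is $\Gamma\in S$ with $\Gamma\subseteq G_\epsilon(\mu)$. Your stabilization argument cannot supply this. It only pins down $\mu$ on characteristic functions of the $Y$-level partitions, which, as you observe yourself, does not control general test functions. In particular it does not make any $\mu_\Gamma$ close to $\mu$ in $\mathcal{M}(X)$. Your fallback is not well defined either: $\bigcap_\Gamma A_1^\Gamma$ is the fibre over $y_0$, whose $\theta$-measure is $\inf_\Gamma 1/[G\colon\Gamma]=0$ whenever $Y$ is infinite. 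So there is no normalized restriction of $\theta$ to it.

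The missing idea is to use the equicontinuity of $(\mathcal{M}(X),G)$ to transfer the stabilizers $G_0(\mu_\Gamma)=\Gamma$ to $\mu$:
\begin{itemize}
\item[(1)] Choose an invariant symmetric $\delta\in\mathbb{U}_{\mathcal{M}(X)}$ with $\delta\delta\subseteq\epsilon$.
\item[(2)] Since $\mu$ is a cluster point, some $\Gamma\in S$ satisfies $(\mu,\mu_\Gamma)\in\delta$.
\item[(3)] For $g\in\Gamma=G_0(\mu_\Gamma)$, invariance gives $(g.\mu,\mu_\Gamma)=(g.\mu,g.\mu_\Gamma)\in\delta$.
\item[(4)] Hence $(\mu,g.\mu)\in\delta\delta\subseteq\epsilon$, i.e.\ $\Gamma\subseteq G_\epsilon(\mu)$.
\end{itemize}
The invariant entourages must be taken in the ambient space $\mathcal{M}(X)$, not just in the orbit closure. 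Each $\mu_\Gamma$ has finite orbit, so it cannot lie in $\overline{G.\mu}$ when $Y$ is infinite. With this step, Lemma~\ref{lem:factorMapControl} gives the factor map $Y\to\overline{G.\mu}$, and coalescence (or Proposition~\ref{pro:EIGfactorsAndConjugacyInvariant}) finishes the proof. This is precisely the second half of the paper's argument.
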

\begin{proof}
    Let $(Y,G)$ be a factor of $(X,G)$ and note that $(Y,G)$ is a subodometer. 
    Thus, it allows for a generating scale $S$ with $S\subseteq \Eig(X,G)$. 
    Consider the nets $(\mu_\Gamma)_{\Gamma\in S}$ in $\mathcal{M}(X)$ and $(\eta_\Gamma)_{\Gamma\in S}$ in $\mathbb{U}_{\mathcal{M}(X)}$ as constructed in Lemma \ref{lem:measureConstructionAndControl}. 
    Since $\mathcal{M}(X)$ is compact $(\mu_\Gamma)_{\Gamma\in S}$ admits cluster points. By possibly restricting to another scale for $(Y,G)$ we assume w.l.o.g.\ that $\mu_\Gamma \to \mu$ in $\mathcal{M}(X)$. Denote $Z:=\overline{G.\mu}$. 
    In order to establish that $(Y,G)$ and $(Z,G)$ are conjugated we next show that $S$ and $\Eig_\mu(Z,G)$ are equivalent scales. 
    
    We first show that $S\subseteq \Eig_\mu(Z,G)$. 
    Consider $\Gamma\in S$. Since $(\mathcal{M}(X),G)$ is equicontinuous there exists an invariant and symmetric $\eta\in \mathbb{U}_{\mathcal{M}(X)}$ such that $\eta\eta\eta\subseteq \eta_\Gamma$. 
    Consider $g\in G_\eta(\mu)$ and choose $\Lambda\in S$ with $(\mu_\Lambda,\mu)\in \eta$. 
    Note that $(\mu,g.\mu)\in \eta$. 
    Furthermore, since $\eta$ is invariant and symmetric we know $(g.\mu,g.\mu_\Lambda)\in \eta$. 
    We thus have $(\mu_\Lambda,g.\mu_\Lambda)\in \eta\eta\eta\subseteq \eta_\Gamma$ and hence $g\in G_{\eta_\Gamma}(\mu_\Lambda)=\Gamma$. 
    This shows $G_\eta(\mu)\subseteq \Gamma=G_0(\Gamma)$ and Lemma \ref{lem:factorMapControl} yields the existence of a factor map $Z\to G/\Gamma$ with $\mu\mapsto \Gamma$, i.e.\ $\Gamma\in \Eig_\mu(Z,G)$. 

    To show that $S$ dominates $\Eig_\mu(Z,G)$, consider $\Gamma\in \Eig_\mu(Z,G)$. 
    Let $\pi\colon Z\to G/\Gamma$ be the factor map with $\pi(\mu)=\Gamma$ and denote 
    $\eta:=R(\pi)\cup (\mathcal{M}(X)\setminus Z)^2.$
    Since $Z$ is a closed subset of $\mathcal{M}(X)$ and $G/\Gamma$ is finite we observe that $\eta$ is open and hence $\eta\in \mathbb{U}_{\mathcal{M}(X)}$. 
    Furthermore, we have $G_\eta(\mu)=G_0(\Gamma)=\Gamma$. 
    Since $(\mathcal{M}(X),G)$ is equicontinuous we find an invariant and symmetric $\delta\in \mathbb{U}_{\mathcal{M}(X)}$ such that $\delta\delta\subseteq \eta$. 
    Let $\Lambda\in S$ be such that $(\mu,\mu_\Lambda)\in \delta$. 
    For $g\in \Lambda=G_0(\mu_\Lambda)$ we know that $g.\mu_\Lambda=\mu_\Lambda$ and hence the invariance of $\delta$ yields
    $(g.\mu, \mu_\Lambda)=(g.\mu,g.\mu_\Lambda)\in \delta$. 
    Thus, we have $(\mu,g.\mu)\in \delta\delta\subseteq \eta$ and hence 
    $g\in G_\eta(\mu)=\Gamma$. 
    This shows $\Lambda\subseteq \Gamma$ and establishes that $S$ dominates $\Eig_\mu(Z,G)$. 
\end{proof}

\subsection{$\mathcal{M}$-quasifactors of odometers}
\label{subsec:inducedDynamicsOnTheRegularBorelProbabilityMeasures_MQuasifactorsOfOdometers}
Next, we show the analog of Proposition \ref{pro:hyperspaceOdometersQFareExactlyF}, i.e.\ that for odometers (up to conjugacy) the $\mathcal{M}$-quasifactors are exactly the factors.
For this we need the following. 

\begin{lemma}
\label{lem:odometerUniformRegularRecurrenceEigenvalueChoice}
    Let $(X,G)$ be an odometer. 
    \begin{itemize}
        \item[(i)] 
        For $f\in C(X)$ and $\epsilon>0$ there exists a finite index subgroup $\Gamma\in \Eig(X,G)$ such that $\sup_{g\in \Gamma} \|f-g^*f\|_\infty\leq \epsilon.$   
        \item[(ii)] 
        For any $\eta\in \mathbb{U}_{\mathcal{M}(X)}$ there exists $\Gamma\in \Eig(X,G)$ such that $\Gamma.\mu\subseteq B_\eta(\mu)$ holds for all $\mu\in  \mathcal{M}(X)$.
    \end{itemize}
\end{lemma}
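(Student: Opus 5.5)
The plan is to follow the proof of Lemma \ref{lem:regularRecurrenceInCX} and the $\mathcal{M}(X)$-part of Theorem \ref{the:HMinheritanceUniformRegularRecurrence}. The only change is that the finite index subgroup obtained from uniform regular recurrence is replaced by an eigenvalue. That replacement is supplied by Proposition \ref{pro:characterizationOdometerAsIntrinsicallyRegularlyRecurrent}: since $(X,G)$ is an odometer, it is intrinsically regularly recurrent. Thus for every $\epsilon'\in \mathbb{U}_X$ there is $\Gamma\in \Eig(X,G)$ with $\Gamma.x\subseteq B_{\epsilon'}(x)$ for all $x\in X$.

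For (i), let $f\in C(X)$ and $\epsilon>0$. By uniform continuity of $f$ on the compact space $X$, I choose $\epsilon'\in \mathbb{U}_X$ such that $|f(x)-f(x')|<\epsilon$ whenever $(x,x')\in \epsilon'$. Intrinsic regular recurrence then gives $\Gamma\in \Eig(X,G)$ with $(g.x,x)\in\epsilon'$ for all $g\in\Gamma$ and $x\in X$. Hence
\[
|f(x)-g^*f(x)|=|f(x)-f(g.x)|<\epsilon
\]
for all $x\in X$ and $g\in \Gamma$, so $\sup_{g\in\Gamma}\|f-g^*f\|_\infty\leq\epsilon$.

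For (ii), I first shrink $\eta$ to a basic entourage $\eta_{(\mathfrak{f},\epsilon)}\subseteq \eta$ with $\mathfrak{f}\subseteq C(X)$ finite and $\epsilon>0$. Applying (i) to each $f\in\mathfrak{f}$ yields $\Gamma_f\in\Eig(X,G)$ with $\|f-g^*f\|_\infty\le\epsilon$ for $g\in\Gamma_f$. The key point is that $\Gamma:=\bigcap_{f\in\mathfrak{f}}\Gamma_f$ is again an eigenvalue. This holds because $\Eig(X,G)$ is intersection closed for odometers (Proposition \ref{pro:odometerCharacterizationEIG}(iii)), and the intersection is finite. Then for $\mu\in\mathcal{M}(X)$, $g\in\Gamma$ and $f\in\mathfrak{f}$,
\[
|\mu(f)-g.\mu(f)|\le\mu(|f-g^*f|)\le\|f-g^*f\|_\infty\le\epsilon,
\]
so $(\mu,g.\mu)\in\eta_{(\mathfrak{f},\epsilon)}\subseteq\eta$, i.e.\ $\Gamma.\mu\subseteq B_\eta(\mu)$.

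There is no real obstacle here. The only point needing care is closure of the eigenvalue set under finite intersections, and that is exactly what distinguishes odometers from subodometers. One should also check the orientation convention of $B_\eta$, but symmetry of $\eta_{(\mathfrak{f},\epsilon)}$ makes it irrelevant.
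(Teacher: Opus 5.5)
Your proposal is correct and takes the same approach as the paper. The paper likewise reruns the proof of Lemma \ref{lem:regularRecurrenceInCX} using intrinsic regular recurrence (Proposition \ref{pro:characterizationOdometerAsIntrinsicallyRegularlyRecurrent}) for (i), and for (ii) reruns the $\mathcal{M}(X)$-part of Theorem \ref{the:HMinheritanceUniformRegularRecurrence}, using that $\Eig(X,G)$ is intersection closed for odometers (Proposition \ref{pro:odometerCharacterizationEIG}).
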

\begin{proof}
(i): 
    Recall from Proposition \ref{pro:characterizationOdometerAsIntrinsicallyRegularlyRecurrent} that odometers are intrinsically regularly recurrent. 
    Thus, in the proof of Lemma \ref{lem:regularRecurrenceInCX} we can choose $\Gamma\in \Eig(X,G)$ to observe the statement. 

(ii): 
    Reconsidering the proof of Theorem \ref{the:HMinheritanceUniformRegularRecurrence} we can invoke (i) in order to choose $\Gamma_f\in \Eig(X,G)$ for $f\in \mathfrak{f}$.
    Recall from Proposition \ref{pro:odometerCharacterizationEIG} that for odometers $\Eig(X,G)$ is intersection closed. 
    Thus, we observe that $\Gamma=\bigcap_{f\in \mathfrak{f}}\Gamma_f\in \Eig(X,G)$. 
\end{proof}

\begin{theorem}
\label{the:forOdometersComponentsMXfactor}
    Whenever $(X,G)$ is an odometer, then (up to conjugacy) the $\mathcal{M}$-quasifactors of $(X,G)$ are exactly the factors of $(X,G)$. 
\end{theorem}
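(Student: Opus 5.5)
One inclusion is already available. By Theorem \ref{the:factorsAreMQuasifactors}, every factor of $(X,G)$ is conjugated to an $\mathcal{M}$-quasifactor. It therefore remains to show that every $\mathcal{M}$-quasifactor $(Z,G)$, with $Z=\overline{G.\mu}$ for some $\mu\in\mathcal{M}(X)$, is a factor of $(X,G)$.

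By Corollary \ref{cor:InducedDynamicsDecompose}, $(Z,G)$ is a subodometer. Proposition \ref{pro:EIGfactorsAndConjugacyInvariant}(i) then reduces the claim to $\Eig(Z,G)\subseteq \Eig(X,G)$. Since $\Eig(X,G)$ is upward closed, it suffices to show the following: every $\Gamma\in\Eig(Z,G)$ contains some $\Lambda\in\Eig(X,G)$.

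Fix $\Gamma\in\Eig(Z,G)$ and a factor map $\pi\colon Z\to G/\Gamma$. Replacing $\Gamma$ by a conjugate, which is harmless because both eigenvalue sets are conjugation invariant, I may assume $\pi(\mu)=\Gamma$. As in the second half of the proof of Theorem \ref{the:factorsAreMQuasifactors}, I set
\[\eta:=R(\pi)\cup(\mathcal{M}(X)\setminus Z)^2.\]
This is an entourage in $\mathbb{U}_{\mathcal{M}(X)}$ with $G_\eta(\nu)\subseteq G_0(\pi(\nu))$ for all $\nu\in Z$; in particular $G_\eta(\mu)=\Gamma$.

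I then apply Lemma \ref{lem:odometerUniformRegularRecurrenceEigenvalueChoice}(ii) to $\eta$. This yields $\Lambda\in\Eig(X,G)$ with $\Lambda.\nu\subseteq B_\eta(\nu)$ for all $\nu\in\mathcal{M}(X)$. Applied to $\nu=\mu$, it gives $(g.\mu,\mu)\in\eta$ for every $g\in\Lambda$, hence $\Lambda\subseteq G_\eta(\mu)=\Gamma$, and so $\Gamma\in\Eig(X,G)$.

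The main care needed is in the definition of $G_\eta$ and of $B_\eta$, which use $(g.\mu,\mu)\in\eta$ versus $\eta[\mu]$. Because $\eta$ restricted to $Z$ is the equivalence relation $R(\pi)$, which is symmetric, the orientation does not matter. The remaining point is to check that pairs with $g.\mu\in Z$ and $(g.\mu,\mu)\in\eta$ really lie in $R(\pi)$. This holds because $Z$ is invariant, so both coordinates lie in $Z$ and the $(\mathcal{M}(X)\setminus Z)^2$ part is irrelevant; consequently $\pi(g.\mu)=\pi(\mu)$, i.e.\ $g\in\Gamma$.

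Finally, since $(Z,G)$ and $(X,G)$ are subodometers, Proposition \ref{pro:EIGfactorsAndConjugacyInvariant}(i) shows that $Z$ is a factor of $X$. Combined with Theorem \ref{the:factorsAreMQuasifactors}, this proves that up to conjugacy the $\mathcal{M}$-quasifactors are exactly the factors.

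The only genuinely nontrivial input is the uniform choice of an eigenvalue in Lemma \ref{lem:odometerUniformRegularRecurrenceEigenvalueChoice}(ii), and this is exactly where the odometer hypothesis, through intersection closedness of $\Eig(X,G)$, is used.
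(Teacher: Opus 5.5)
Your argument is essentially the paper's proof. The only difference is cosmetic: you conjugate $\Gamma$ so that $\pi(\mu)=\Gamma$, whereas the paper works with $\Eig_\mu(Z,G)$ and uses $\Eig(Z,G)=\eigenhull{\Eig_\mu(Z,G)}$, which comes to the same thing.

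There is one false step, which you copied from the paper (it also appears in the proof of Theorem~\ref{the:factorsAreMQuasifactors}). The set $\eta=R(\pi)\cup(\mathcal{M}(X)\setminus Z)^2$ is \emph{not} an entourage of $\mathcal{M}(X)$ once $|X|\geq 2$. Take $\nu\in Z$ and any $\nu'\neq\nu$. The map $t\mapsto(1-t)\nu+t\nu'$ is an arc in $\mathcal{M}(X)$ starting at $\nu$. Since $Z$ is totally disconnected, no neighbourhood of $\nu$ is contained in $Z$, so every neighbourhood of $\nu$ contains some $\nu''\notin Z$. The pair $(\nu,\nu'')$ lies neither in $R(\pi)\subseteq Z^2$ nor in $(\mathcal{M}(X)\setminus Z)^2$. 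Hence $\eta$ is not a neighbourhood of $(\nu,\nu)$, and Lemma~\ref{lem:odometerUniformRegularRecurrenceEigenvalueChoice}(ii) cannot be applied to it as written.

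The repair is immediate: put $\eta:=R(\pi)\cup(\mathcal{M}(X)^2\setminus Z^2)$. Its complement in $\mathcal{M}(X)^2$ is $Z^2\setminus R(\pi)$. Since $R(\pi)=(\pi\times\pi)^{-1}(\Delta_{G/\Gamma})$ and the diagonal of the finite discrete space $G/\Gamma$ is open, this complement is closed in $Z^2$, hence closed in $\mathcal{M}(X)^2$. So $\eta$ is an open neighbourhood of the diagonal with $\eta\cap Z^2=R(\pi)$. That intersection property is the only one your subsequent argument uses (both $g.\mu$ and $\mu$ lie in $Z$). With this change the proof is complete.
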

\begin{proof}
    It remains to show that each $\mathcal{M}$-quasifactor is a factor of $(X,G)$. 
    For this consider $\mu\in \mathcal{M}(X)$. 
    Denote $Y:=\overline{G.\mu}$ for the respective minimal component and note that $(Y,G)$ is a subodometer. 
    Next, we show that $\Eig_\mu(Y,G)\subseteq \Eig(X,G)$. 
    We then have $\Eig(Y,G)=\eigenhull{\Eig_\mu(Y,G)}\subseteq \Eig(X,G)$ and Proposition \ref{pro:EIGfactorsAndConjugacyInvariant} yields that $(Y,G)$ is a factor of $(X,G)$. 
    
    Consider $\Gamma\in \Eig_\mu(Y,G)$ and the factor map $\pi\colon Y\to G/\Gamma=:Z$ with $\pi(\mu)=\Gamma$. 
    Denote
    $\eta:=R(\pi)\cup (\mathcal{M}(X)\setminus Z)^2$
    and note that $\eta\in \mathbb{U}_{\mathcal{M}(X)}$. 
    Lemma \ref{lem:odometerUniformRegularRecurrenceEigenvalueChoice}
    allows us to find $\Lambda\in \Eig(X,G)$ such that $\Lambda.\nu \subseteq B_\eta(\nu)$ holds for all $\nu\in \mathcal{M}(X)$. 
    It follows that $\Lambda\subseteq G_\eta(\mu)=G_0(\pi(\mu))=\Gamma$. 
    Since $\Eig(X,G)$ is upward closed and contains $\Lambda$ we conclude $\Gamma\in \Eig(X,G)$. 
\end{proof}

With a similar argument as for Corollary \ref{cor:hyperspaceQuasifactorsAreFactorsOfEnvelopingOdometer} we observe the following. 

\begin{corollary}
\label{cor:measureQuasifactorsAreFactorsOfEnvelopingOdometer}
    Let $(X,G)$ be a subodometer and denote by $(\envelopingOdometer{X},G)$ its enveloping odometer. Any $\mathcal{M}$-quasifactor of $(X,G)$ is a factor of $(\envelopingOdometer{X},G)$.  
\end{corollary}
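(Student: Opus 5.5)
We follow the argument of Corollary \ref{cor:hyperspaceQuasifactorsAreFactorsOfEnvelopingOdometer}, replacing the hyperspace by the space of measures.

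First fix a factor map $\pi\colon \envelopingOdometer{X}\to X$ and consider the push-forward
\[\pi_*\colon \mathcal{M}(\envelopingOdometer{X})\to \mathcal{M}(X),\qquad \pi_*\nu(f):=\nu(f\circ\pi)\quad (f\in C(X)).\]
We need three properties of $\pi_*$.
\begin{itemize}
\item \emph{Continuity.} If $\nu_i\to\nu$ weak-*, then $\nu_i(f\circ\pi)\to\nu(f\circ\pi)$ for each $f\in C(X)$, since $f\circ\pi\in C(\envelopingOdometer{X})$.
\item \emph{Equivariance.} For $g\in G$ and $f\in C(X)$ we have $g^*f\circ\pi=g^*(f\circ\pi)$ because $\pi$ is equivariant. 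Hence $\pi_*(g.\nu)(f)=\nu(g^*(f\circ\pi))=g.\pi_*\nu(f)$.
\item \emph{Surjectivity.} This is the step that needs an argument; we do it below.
\end{itemize}

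For surjectivity, the map $f\mapsto f\circ\pi$ embeds $C(X)$ isometrically into $C(\envelopingOdometer{X})$ as a subspace containing the constants. Take $\mu\in\mathcal{M}(X)$ and view it as a positive functional of norm $1$ on this subspace. By Hahn–Banach, extend it to a functional of norm $1$ on $C(\envelopingOdometer{X})$. A norm-one functional with value $1$ at the constant function $1$ is positive, so the extension is a probability measure $\nu$ with $\pi_*\nu=\mu$.

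Alternatively, avoid Hahn–Banach. Point masses map to point masses, and $\pi_*$ is affine and continuous. So the image of $\pi_*$ is a compact convex set containing all $\delta_x$. By Krein–Milman, the closed convex hull of the Dirac measures is all of $\mathcal{M}(X)$, so $\pi_*$ is onto. This surjectivity is the only point that requires care.

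Now let $Z\subseteq\mathcal{M}(X)$ be an $\mathcal{M}$-quasifactor of $(X,G)$.
\begin{enumerate}
\item Since $\pi_*$ is surjective and $Z$ is closed and invariant, $\pi_*^{-1}(Z)$ is a nonempty closed invariant subset of $\mathcal{M}(\envelopingOdometer{X})$.
\item By Corollary \ref{cor:InducedDynamicsDecompose}, $(\mathcal{M}(\envelopingOdometer{X}),G)$ decomposes into minimal components, so $\pi_*^{-1}(Z)$ contains a minimal component $W$.
\item The image $\pi_*(W)$ is a nonempty closed invariant subset of the minimal set $Z$, so $\pi_*(W)=Z$. Thus $\pi_*|_W\colon W\to Z$ is a factor map.
\item $W$ is an $\mathcal{M}$-quasifactor of the odometer $\envelopingOdometer{X}$. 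By Theorem \ref{the:forOdometersComponentsMXfactor}, $W$ is conjugate to a factor of $(\envelopingOdometer{X},G)$.
\item Composing the factor map $\envelopingOdometer{X}\to W$ with $\pi_*|_W$ shows that $Z$ is a factor of $(\envelopingOdometer{X},G)$.
\end{enumerate}
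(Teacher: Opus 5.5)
Your proof is correct and follows the paper's route: the paper simply transfers the argument of Corollary~\ref{cor:hyperspaceQuasifactorsAreFactorsOfEnvelopingOdometer}. The push-forward $\pi_*\colon\mathcal{M}(\envelopingOdometer{X})\to\mathcal{M}(X)$ is a factor map, so every $\mathcal{M}$-quasifactor of $(X,G)$ is a factor of an $\mathcal{M}$-quasifactor of the odometer $(\envelopingOdometer{X},G)$, and Theorem~\ref{the:forOdometersComponentsMXfactor} concludes. You also supply details the paper leaves implicit, namely the surjectivity of $\pi_*$ (via Hahn--Banach or Krein--Milman) and the lifting of a minimal set through $\pi_*$, and these are all sound.
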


\subsection{$\mathcal{M}$-quasifactors of subodometers}
\label{subsec:inducedDynamicsOnTheRegularBorelProbabilityMeasures_MQuasifactorsOfSubodometers}
Next, we establish the analog of Theorem~\ref{the:hyperspaceCharacterizationOdometerViaQuasifactors} and show that a subodometer is an odometer if and only if all (finite) $\mathcal{M}$-quasifactors are factors. 
For this we use our results on $\hyper$-quasifactors and the following. 

\begin{lemma}
\label{lem:HXMXtranslation}
    Let $(X,G)$ be a minimal equicontinuous action.         
    All finite \linebreak $\hyper$-quasifactors appear as $\mathcal{M}$-quasifactors up to conjugacy. 
\end{lemma}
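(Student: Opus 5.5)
The plan is to map a finite $\hyper$-quasifactor equivariantly into $\mathcal{M}(X)$ by averaging the unique invariant measure $\theta$ over each closed set, and to check that on the relevant sets this map is injective. Let $(Z,G)$ be a finite $\hyper$-quasifactor, i.e.\ $Z=\overline{G.A}=G.A$ is a finite orbit for some $A\in\hyper(X)$. Since $Z$ is finite, $\Gamma:=G_0(A)\in\subfin(G)$, and $(Z,G)$ is conjugated to $(G/\Gamma,G)$ via $g.A\mapsto g\Gamma$.

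First, we make $A$ clopen. Since $(\hyper(X),G)$ is equicontinuous and $Z$ is a finite minimal component, the element $A$ is isolated in the set of closed sets with the same stabilizer along the orbit. More concretely, $A$ is invariant under $\Gamma$. Because $(X,G)$ is minimal, the $\Gamma$-orbit closures partition $X$ into finitely many $\Gamma$-minimal sets; this follows from Lemma~\ref{lem:normalSubgroupMinimalComponents} applied to the core $\Gamma_G$ together with finite index. Each of these pieces is clopen, so $A$ is a union of them and hence clopen. In particular $\theta(A)>0$, since $\theta$ has full support: the support of $\theta$ is a nonempty closed invariant set and therefore equals $X$ by minimality.

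Next, define $\mu_A:=\theta((\cdot)\cap A)/\theta(A)$, and more generally $\mu_{g.A}=g.\mu_A$; this uses that $\theta$ is $G$-invariant. The map $\Phi\colon Z\to\mathcal{M}(X)$, $B\mapsto \theta((\cdot)\cap B)/\theta(B)$, is then equivariant. It is automatically continuous because $Z$ is finite. For injectivity, the support of $\mu_B$ is $B$, since $B$ is clopen and $\theta$ has full support. Hence $B\neq B'$ implies $\mu_B\neq\mu_{B'}$.

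Finally, $\Phi(Z)=G.\mu_A$ is a finite orbit and hence closed and minimal. It is therefore an $\mathcal{M}$-quasifactor, and $\Phi$ is a conjugacy onto it.

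The main obstacle is the clopenness step, i.e.\ showing that every $\Gamma$-invariant closed set is a union of finitely many clopen $\Gamma$-minimal pieces. I would first try the argument via $\Gamma_G$: by Lemma~\ref{lem:normalSubgroupMinimalComponents}, $X$ is the union of the sets $\overline{\Gamma_G.x}$, which are minimal for $\Gamma_G$. These are permuted transitively by $G$, so there are finitely many of them, disjoint and closed, and hence clopen. Then $A$, being $\Gamma_G$-invariant and closed, contains with each point its whole $\Gamma_G$-orbit closure, and so it is a finite union of these clopen sets.
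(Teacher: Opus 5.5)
Your proof is correct and follows essentially the same route as the paper. Both decompose $X$ into the finitely many clopen minimal components of $(X,\Gamma_G)$ via Lemma~\ref{lem:normalSubgroupMinimalComponents}, note that every element of the orbit is a union of these components, and map $B\mapsto \theta((\cdot)\cap B)/\theta(B)$. Equivariance comes from the invariance of $\theta$, and injectivity from full support; your support argument is just a rephrasing of the paper's positive-measure-components argument.

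The opening remark in your clopenness paragraph about $A$ being ``isolated'' is vague and unnecessary, and can be dropped. The $\Gamma_G$-argument in your last paragraph already establishes clopenness on its own.
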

\begin{proof}
    Let $Y\subseteq \hyper(X)$ be a finite $\hyper$-quasifactor and $A\in Y$.  
    Note that $\Gamma:=G_0(A)\in \subfin(G)$. 
    Since $(X,G)$ is minimal, it is pointwise almost periodic and hence $(X,\Gamma_G)$ decomposes into minimal components. 
    From $\Gamma_G\subseteq \Gamma$ we observe $\overline{\Gamma_G.x}\subseteq A$ for all $x\in X$ and hence $A$ is a finite union of minimal components of $(X,\Gamma_G)$. 
    Furthermore, the normality of $\Gamma_G$ yields that $G$ permutes the minimal components of $(X,\Gamma_G)$.
    It follows that each $A'\in Y$ is a finite union of minimal components of $(X,\Gamma_G)$. 

    Let $\theta$ be the unique invariant regular Borel probability measure on $X$ \cite[Chapter~7]{auslander1988minimal}. 
    It follows from the invariance of $\theta$ that $\theta(A')>0$ holds for all $A'\in Y$. 
    This allows us to define $\pi\colon Y\to \mathcal{M}(X)$ by $A'\mapsto \theta((\cdot)\cap A')/\theta(A')$. 
    From the invariance of $\theta$ we observe that $\pi$ is equivariant. 
    Furthermore, since each $A'$ is a union of minimal components of $(X,\Gamma_G)$ and since each such component has a positive measure it follows that $\pi$ is injective. 
    Since $Y$ is finite, $\pi$ a conjugation onto its image. 
\end{proof}

\begin{theorem}
\label{the:MXcharacterizationOdometerViaMinimalComponents}
    For an infinite subodometer $(X,G)$ the following statements are equivalent.
    \begin{itemize}
        \item[(i)] $(X,G)$ is an odometer.
        \item[(ii)] All $\mathcal{M}$-quasifactors of $(X,G)$ are factors. 
        \item[(iii)] All finite $\mathcal{M}$-quasifactors of $(X,G)$ are factors. 
    \end{itemize}
\end{theorem}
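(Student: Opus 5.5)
The plan is to run the cycle (i)$\Rightarrow$(ii)$\Rightarrow$(iii)$\Rightarrow$(i), borrowing the corresponding steps from the hyperspace case.

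For (i)$\Rightarrow$(ii), Theorem \ref{the:forOdometersComponentsMXfactor} already shows that every $\mathcal{M}$-quasifactor of an odometer is conjugated to a factor. The implication (ii)$\Rightarrow$(iii) is trivial.

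The content is in (iii)$\Rightarrow$(i), which I will prove by contraposition. Suppose $(X,G)$ is an infinite subodometer that is not an odometer. The proof of Theorem \ref{the:hyperspaceCharacterizationOdometerViaQuasifactors} produces a finite $\hyper$-quasifactor that is not a factor, and I will reuse that construction:
\begin{itemize}
\item[(a)] Proposition \ref{pro:filteredEigensetsViaEsettled} gives a settled $\Gamma\in\Eig(X,G)$ that is not normal, so there is $g\in G$ with $\Gamma^g\neq\Gamma$.
\item[(b)] Since $X$ is infinite, repeated use of Lemma \ref{lem:indicesInInfiniteEigensets} yields $\Lambda\in\Eig(X,G)$ with $\Lambda\subseteq\Gamma$ and $[\Gamma\colon\Lambda]\geq 3$.
\item[(c)] Lemma \ref{lem:quasiFactorConstruction}(ii) then shows that $Y:=\overline{G.A}\subseteq\hyper(X)$ is a finite $\hyper$-quasifactor which is not a factor of $(X,G)$.
\end{itemize}

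Next, Lemma \ref{lem:HXMXtranslation} applies because a subodometer is minimal and equicontinuous. It shows that $Y$ is conjugated to a (necessarily finite) $\mathcal{M}$-quasifactor $Z\subseteq\mathcal{M}(X)$. Being a factor of $(X,G)$ is invariant under conjugacy, so $Z$ is a finite $\mathcal{M}$-quasifactor that is not a factor. This contradicts (iii).

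All the work has already been done in the preceding lemmas, so I expect no real obstacle. The one point to check is the phrase ``are factors'' in (ii) and (iii): it has to be read up to conjugacy, matching the convention of Theorem \ref{the:hyperspaceCharacterizationOdometerViaQuasifactors}. Under that reading, transporting the non-factor property along the conjugacy of Lemma \ref{lem:HXMXtranslation} is legitimate.
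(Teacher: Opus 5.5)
Your proposal is correct and follows the paper's proof: (i)$\Rightarrow$(ii) by Theorem \ref{the:forOdometersComponentsMXfactor}, (ii)$\Rightarrow$(iii) trivially, and (iii)$\Rightarrow$(i) by taking the finite $\hyper$-quasifactor that is not a factor (from Theorem \ref{the:hyperspaceCharacterizationOdometerViaQuasifactors}) and transporting it to $\mathcal{M}(X)$ via Lemma \ref{lem:HXMXtranslation}. The only difference is that you unpack the construction behind Theorem \ref{the:hyperspaceCharacterizationOdometerViaQuasifactors} (settled non-normal $\Gamma$, $[\Gamma\colon\Lambda]\geq 3$, Lemma \ref{lem:quasiFactorConstruction}(ii)) instead of citing the theorem, which changes nothing.
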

\begin{proof}
(i)$\Rightarrow$(ii): 
    Whenever $(X,G)$ is an odometer we know from Theorem \ref{the:forOdometersComponentsMXfactor} that all $\mathcal{M}$-quasifactors are factors (up to conjugacy). 

(ii)$\Rightarrow$(iii):
    Immediate. 

(iii)$\Rightarrow$(i):
    From Theorem \ref{the:hyperspaceCharacterizationOdometerViaQuasifactors} we observe that an infinite subodometer that is not an odometer allows for a finite $\hyper$-quasifactor $(Y,G)$ that is not a factor. It follows from Lemma \ref{lem:HXMXtranslation} that $(Y,G)$ is also a $\mathcal{M}$-quasifactor (up to conjugacy). 
\end{proof}

\section{Disjointness of subodometers}
\label{sec:DisjointnessOfSubodometers}
Two minimal actions $(X,G)$ and $(Y,G)$ are called \emph{disjoint} if $(X\times Y,G)$ is minimal. We write $X\perp Y$ to indicate that $X$ and $Y$ are disjoint. 
Note that two disjoint actions only have the trivial action as a common factor. 
The converse is true if at least one of the actions is regular and distal \cite[Theorem~11.21]{auslander1988minimal}; in particular, this holds if one of the actions is an odometer. 
However, it is well known that in general the converse fails. 
See Example \ref{exa:subodometersNoCommonFactors} below for an illustration involving subodometers. 
For further details on disjointness see \cite[Chapter~11]{auslander1988minimal}. 
We next characterize disjointness to subodometers and the absence of nontrivial common factors with subodometers. 
For this we study first the case of finite subodometers. 

\subsection{Finite subodometers}
\label{subsec:DisjointnessOfSubodometers_FiniteSubodometers}

\begin{proposition}
\label{pro:disjointnessNoCommonFactorsFiniteSubodometers}
    Let $\Gamma,\Lambda\in \subfin(G)$. 
    \begin{itemize}
        \item[(i)] 
        $(G/\Gamma,G)$ and $(G/\Lambda,G)$ are disjoint if and only if $\Gamma\Lambda=G$. 
        \item[(ii)] 
        $(G/\Gamma,G)$ and $(G/\Lambda,G)$ have no nontrivial common factor if and only if $\grouphull{\Gamma^g\cup \Lambda}=G$ holds for all $g\in G$. 
    \end{itemize}
\end{proposition}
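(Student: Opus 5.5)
For (i) we work directly with the finite product. Since $G/\Gamma\times G/\Lambda$ is finite, it is minimal exactly when it consists of a single orbit, i.e.\ when the orbit of $(\Gamma,\Lambda)$ is everything. The stabilizer of $(\Gamma,\Lambda)$ is $G_0(\Gamma)\cap G_0(\Lambda)=\Gamma\cap\Lambda$. Hence this orbit has cardinality $[G\colon\Gamma\cap\Lambda]$, and minimality is equivalent to
\[[G\colon \Gamma\cap\Lambda]=[G\colon\Gamma][G\colon\Lambda].\]
Using Remark~\ref{rem:fundamentalDomainsComposition}, we rewrite this as $[\Gamma\colon\Gamma\cap\Lambda]=[G\colon\Lambda]$. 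The map $\Gamma/(\Gamma\cap\Lambda)\to G/\Lambda$, $\gamma(\Gamma\cap\Lambda)\mapsto\gamma\Lambda$, is well defined and injective, and its image is $\{\gamma\Lambda;\,\gamma\in\Gamma\}$. So equality of the cardinalities holds iff this map is onto, i.e.\ iff every coset $g\Lambda$ meets $\Gamma$. That in turn means $G=\Gamma\Lambda$. This gives (i), and it is routine counting.

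For (ii), we first describe the factors of $G/\Gamma$. By \S\ref{subsubsec:prelims_subodometers_FiniteSubodometers}, any factor of the finite subodometer $G/\Gamma$ is finite and minimal. It is therefore conjugate to $G/\Delta$, where $\Delta$ is the stabilizer of the image of $\Gamma$ and satisfies $\Gamma\subseteq\Delta$. Conversely, every $\Delta\supseteq\Gamma$ yields the factor $\pi^\Gamma_\Delta$. Moreover, $G/\Delta$ and $G/\Delta'$ are conjugate iff $\Delta'=\Delta^h$ for some $h$, since a conjugacy sends $\Delta$ to a point with stabilizer $\Delta'$, and stabilizers along an orbit are the conjugates. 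Consequently, a common factor of $G/\Gamma$ and $G/\Lambda$ amounts to a subgroup $\Delta\supseteq\Gamma$ together with some $h$ with $\Delta^h\supseteq\Lambda$. Equivalently, replacing $\Delta$ by $\Delta^h$ and $\Gamma$ by $\Gamma^h$, it is a subgroup $\Delta'$ containing both $\Gamma^h$ and $\Lambda$. Such a factor is trivial iff $\Delta=G$.

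The equivalence then follows. If $\grouphull{\Gamma^g\cup\Lambda}\neq G$ for some $g$, then $\Delta':=\grouphull{\Gamma^g\cup\Lambda}$ is a proper subgroup containing $\Lambda$, and it has finite index since $\Lambda\in\subfin(G)$. Its conjugate $\Delta'^{g^{-1}}$ contains $\Gamma$, so $G/\Delta'$ is a nontrivial common factor. Conversely, suppose there is a nontrivial common factor, given by a proper $\Delta'$ containing $\Gamma^h$ and $\Lambda$. Then $\grouphull{\Gamma^h\cup\Lambda}\subseteq\Delta'\neq G$. The only point needing care is the bookkeeping of conjugations in the identification of factors up to conjugacy, namely which of $\Gamma$ or $\Lambda$ is conjugated and by $g$ or $g^{-1}$. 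This is harmless because the condition ranges over all $g\in G$.
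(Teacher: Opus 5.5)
Your proof is correct. Part (ii) follows the paper's idea, and part (i) takes a different, counting-based route.

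For (i) you argue by counting. Orbit--stabilizer shows that the orbit of $(\Gamma,\Lambda)$ has $[G\colon\Gamma\cap\Lambda]=[G\colon\Gamma][\Gamma\colon\Gamma\cap\Lambda]$ elements. So minimality amounts to $[\Gamma\colon\Gamma\cap\Lambda]=[G\colon\Lambda]$, i.e.\ to surjectivity of the injection $\Gamma/(\Gamma\cap\Lambda)\hookrightarrow G/\Lambda$, i.e.\ to $G=\Gamma\Lambda$.

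The paper instead chases elements. Transitivity yields, for each $g$, some $h$ with $(h\Gamma,h\Lambda)=(\Gamma,g\Lambda)$, so $h\in\Gamma$ and $g\in h\Lambda\subseteq\Gamma\Lambda$. Conversely, writing $g^{-1}g'=hh'$ with $h\in\Gamma$, $h'\in\Lambda$ shows that $gh$ carries $(\Gamma,\Lambda)$ to $(g\Gamma,g'\Lambda)$. The paper's route is shorter, and it shows that transitivity is equivalent to $G=\Gamma\Lambda$ for arbitrary subgroups; finiteness only enters to identify minimality with transitivity. Yours relies on the indices being finite, but it also computes the size of the minimal component through $(\Gamma,\Lambda)$.

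For (ii) the underlying idea is the paper's. A nontrivial common factor forces a proper subgroup containing some $\Gamma^g$ and $\Lambda$, and conversely $G/\grouphull{\Gamma^g\cup\Lambda}$ is a common factor. You organize this through a classification of the factors of $G/\Gamma$ up to conjugacy, namely overgroups of $\Gamma$ modulo conjugation. The paper works directly with the two factor maps into $Z$. It uses minimality of $Z$ to find $g$ with $\pi_\Lambda(\Lambda)=\pi_\Gamma(g\Gamma)$, and notes that the stabilizer of this point contains $\grouphull{\Gamma^g\cup\Lambda}=G$. This avoids the conjugation bookkeeping you flag at the end; in exchange, your version describes all common factors, not just whether a nontrivial one exists.

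One minor slip in wording: a conjugacy $G/\Delta\to G/\Delta'$ sends $\Delta$ to a point whose stabilizer is $\Delta$, not $\Delta'$. That point is some $k\Delta'$ with stabilizer $(\Delta')^k$, so you get $\Delta=(\Delta')^k$, and your conclusion stands.
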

\begin{remark}
    Note that (i) is a special case of \cite[Theorem~11.12]{auslander1988minimal}. We include the short proof for the convenience of the reader. 
\end{remark}
\begin{proof}
(i):     
    Since $G/\Gamma\times G/\Lambda$ is finite, $(G/\Gamma,G)$ and $(G/\Lambda,G)$ are disjoint if and only if the orbit of $(\Gamma, \Lambda)$  is equal to $G/\Gamma\times G/\Lambda$. Thus, for every $g\in G$, there exists $h\in G$ such that 
    $(h\Gamma, h\Lambda)
    =h.(\Gamma, \Lambda)
    =(\Gamma, g\Lambda)$. 
    From $h\Gamma=\Gamma$ we observe $h\in \Gamma$ and  $g\Lambda=h\Lambda\subseteq \Gamma\Lambda$ yields
    $g\in \Gamma\Lambda$. 

    For the converse assume $G=\Gamma\Lambda$ and consider $(g\Gamma,g'\Lambda)\in G/\Gamma\times G/\Lambda$. 
    There exist $h\in \Gamma$ and $h'\in \Lambda$ with $g^{-1}g'=hh'$, i.e.\ $g'=ghh'$. 
    Thus, we have
    \begin{align*}
        gh.(\Gamma,\Lambda)
        = (gh\Gamma,gh\Lambda)
        = (g\Gamma, ghh'\Lambda)
        = (g\Gamma,g'\Lambda).
    \end{align*}
    This shows that $G/\Gamma\times G/\Lambda$ is minimal.  

(ii): 
    Assume that $(G/\Gamma,G)$ and $(G/\Lambda,G)$ have no nontrivial common factor and consider $g\in G$.
    Since $(G/\Gamma^g)$ is conjugated to $(G/\Gamma,G)$ we observe that $(G/\grouphull{\Gamma^g\cup \Lambda},G)$ is a common factor of the considered actions and hence trivial. 
    This shows $\grouphull{\Gamma^g\cup \Lambda}=G$. 

    For the converse, 
    assume that $\grouphull{\Gamma^g\cup \Lambda}=G$ holds for all $g\in G$ 
    and consider a common factor $(Z,G)$. 
    Choose factor maps $\pi_\Gamma\colon G/\Gamma\to Z$ and $\pi_\Lambda\colon G/\Lambda\to Z$. 
    Since $(Z,G)$ is finite and minimal there exists $g\in G$ with 
    $\pi_\Lambda(\Lambda)=g.\pi_\Gamma(\Gamma)=\pi_\Gamma(g\Gamma)$. 
    We have 
    \begin{align*}
        G_0(\pi_\Lambda(\Lambda))=G_0(\pi_\Gamma(g\Gamma))\supseteq G_0(g\Gamma)=\Gamma^g
    \end{align*}
    and furthermore $G_0(\pi_\Lambda(\Lambda))\supseteq G_0(\Lambda)=\Lambda$. 
    Since $Z$ is finite $G_0(\pi_\Lambda(\Lambda))$ is a group and we observe
    $G_0(\pi_\Lambda(\Lambda))\supseteq \grouphull{\Gamma^g\cup \Lambda}=G$.
    This shows that $\pi(\Lambda)$ is a fixed point of $(Z,G)$. 
    Since $(Z,G)$ is minimal we observe it to be trivial. 
\end{proof}

\begin{remark}
\label{rem:disjointnessFiniteOdometers}
    If one of the groups $\Gamma$ or $\Lambda$ is normal, then $(G/\Gamma, G)$ and $(G/\Lambda,G)$ are disjoint if and only if $G=\grouphull{\Gamma \cup \Lambda}=\Gamma\Lambda$, 
    which is in turn equivalent to the fact that $(G/\Gamma,G)$ and $(G/\Lambda,G)$ 
    have no nontrivial common factors. 
    This also follows from \cite[Theorem~11.21]{auslander1988minimal}, 
    since odometers are regular equicontinuous systems. 
\end{remark} 

The following example illustrates that the testing for all $g\in G$ in Proposition \ref{pro:disjointnessNoCommonFactorsFiniteSubodometers}(ii) is necessary. 

\begin{example}
\label{exa:noCommonFactorsCondition}
    Consider the symmetric group $S_3$, $\Gamma:=\grouphull{(12)}$ and $\Lambda:=\grouphull{(13)}$. 
    For $g:=(23)$ we observe that $\Gamma^g=\Lambda$ and hence that $(G/\Gamma,G)$ and $(G/\Lambda,G)$ are conjugated. 
    Nevertheless, from $(13)(12)=(123)$ we observe 
    \[\grouphull{\Gamma\cup \Lambda}=\grouphull{(12),(13)}=\grouphull{(12),(123)}=S_3.\] 
\end{example}

The following example illustrates that in general the absence of a common factor can occur for non-disjoint finite subodometers. 

\begin{example}
\label{exa:subodometersNoCommonFactors}
    Denote $r:=(1234)$ and $s:=(12)(34)$ and consider $D_4:=\grouphull{r,s}$. 
    Note that $D_4$ is the symmetry group of the square and $|D_4|=8$. 
    Let $\Gamma:=\grouphull{s}$ and 
    $\Lambda:=\grouphull{rs}=\grouphull{(13)}$. 
    From $|\Gamma|=|\Lambda|=2$ we observe that $\Gamma\Lambda\subsetneq D_4$.
    Thus, $(D_4/\Gamma,D_4)$ and $(D_4/\Lambda,D_4)$ are not disjoint.

    Note that we have 
    $s^2=()$, 
    $r^4=()$,
    and $rs=sr^{-1}$. 
    Thus, any $g\in D_4$ can be represented as $g=s^lr^k$ with $l\in \{0,1\}$ and $k\in \{0,1,2,3\}$. 
    From $rs=sr^{-1}$ we observe $r^ks=sr^{-k}$ and hence
    \[\grouphull{\Lambda^g\cup \Gamma}
    \supseteq \Gamma \Lambda^g \Gamma\ni (s^{-l})(g(rs)g^{-1})(s^{l+1})
    =r^krsr^{-k}s=r^{2k+1}.\]
    From $r^4=()$ we know $r^{2k+1}\in \{r,r^{-1}\}$ and observe $r\in \grouphull{\Lambda^g\cup \Gamma}$.
    Thus, $D_4=\grouphull{r,s}\subseteq \grouphull{\Lambda^g\cup \Gamma}$ and it follows that $(D_4/\Gamma,D_4)$ and $(D_4/\Lambda,D_4)$ admit no nontrivial common factor. 
\end{example}

\begin{remark}
    Similar examples with infinite acting groups can easily be constructed from the previous examples by considering an infinite group $H$ and $G:=H\times S_3$ (or $G:=H\times D_4$). 
    With $\Gamma$ and $\Lambda$ as in the respective example 
    $\Gamma':=H\times \Gamma$ and $\Lambda':=H\times \Lambda$ exhibit the same phenomenon. 
\end{remark}

\subsection{Disjointness to subodometers}
\label{subsec:DisjointnessOfSubodometers_DisjointnessToSubodometers}
We next characterize the disjointness of a minimal action to a subodometer in terms of the respective factors. 

\begin{theorem}
\label{the:disjointnessCharacterization}
    Let $(X,G)$ be a minimal action and $(Y,G)$ be a subodometer. 
    The following statements are equivalent:  
    \begin{itemize}
        \item[(i)] $X\perp Y$.
        \item[(ii)] $Z\perp Y$, for every finite factor $Z$ of $X$. 
        \item[(iii)] $X \perp Z'$, for every finite factor $Z'$ of $Y$.
        \item[(iv)] $Z \perp Z'$, for all finite factors $Z$ and $Z'$ of $X$ and $Y$, respectively.
        \item[(v)] $\Lambda\Gamma=G$, for all $\Lambda\in \Eig(X,G)$ and $\Gamma\in \Eig(Y,G)$.
    \end{itemize}
\end{theorem}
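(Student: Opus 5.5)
Since factors of disjoint minimal actions are again disjoint (a factor map $X\times Y\to Z\times Y$ sends the minimal action onto a minimal one), the implications (i)$\Rightarrow$(ii), (i)$\Rightarrow$(iii), (ii)$\Rightarrow$(iv) and (iii)$\Rightarrow$(iv) are immediate. The equivalence (iv)$\Leftrightarrow$(v) follows from Proposition \ref{pro:disjointnessNoCommonFactorsFiniteSubodometers}(i). Every finite factor of $X$ is conjugate to $G/\Lambda$ with $\Lambda\in\Eig(X,G)$, and conversely. The same holds for $Y$ and $\Gamma\in\Eig(Y,G)$, since finite minimal actions are of the form $G/G_0(z)$. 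Hence (iv) holds if and only if $\Lambda\Gamma=G$ for all such pairs. The whole content is therefore the implication (v)$\Rightarrow$(i), and I plan to prove it directly.

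Assume (v). Fix $x\in X$ and $y\in Y$. It suffices to show that the orbit of $(x,y)$ is dense in $X\times Y$, because then every point of the closure has a dense orbit. More precisely, I will show that the orbit closure $M:=\overline{G.(x,y)}$ equals $X\times Y$. Let $(x',y')\in X\times Y$ and take open neighbourhoods $U\ni x'$ and $V\ni y'$. By Proposition \ref{pro:characterizationGTiles} we may shrink $V$ to a $G$-tile $B$ containing $y'$. As in the proof of Proposition \ref{pro:regularlyRecurrentPointsAndGTiles}, $B$ induces a finite factor $\pi\colon Y\to Y/R$ with $\pi^{-1}(\pi(y'))=B$. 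Put $\Gamma:=G_0(\pi(y))\in\Eig(Y,G)$, and pick $h\in G$ with $h.\pi(y)=\pi(y')$, so that $hg.y\in B$ for all $g\in\Gamma$. It therefore suffices to find $g\in\Gamma$ with $hg.x\in U$, i.e.\ $g.x\in h^{-1}.U$. So the key claim is that $\Gamma.x$ is dense in $X$ for every $\Gamma\in\Eig(Y,G)$ and every $x$.

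To prove the key claim, I will use that $(X,\Gamma)$ is pointwise almost periodic. Lemma \ref{lem:normalSubgroupMinimalComponents} gives this for the normal core $\Gamma_G$. For $\Gamma$ itself, the $\Gamma_G$-minimal sets are permuted by $\Gamma$, and a $\Gamma$-orbit closure is a union of those sets. Since $[G\colon\Gamma_G]<\infty$, there are finitely many $\Gamma_G$-minimal sets $g.A$ with $A=\overline{\Gamma_G.x}$, and they form a clopen partition (as in the proof of Proposition \ref{pro:regularlyRecurrentPointsAndGTiles}). Thus $C:=\overline{\Gamma.x}$ is a clopen set, and the translates $\{g.C;\, g\in G\}$ form a partition of $X$, namely the $\Gamma$-orbit closures. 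This partition gives a finite factor $X\to G/\Lambda$ with $x\mapsto\Lambda$, where $\Lambda:=\{g;\, g.C=C\}\supseteq\Gamma$. Hence $\Lambda\in\Eig(X,G)$, and by (v) $\Lambda\Gamma=G$. Since $\Gamma\subseteq\Lambda$, this gives $\Lambda=G$. So $C$ is invariant, and minimality of $X$ gives $C=X$. This proves the claim.

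The main obstacle is the step showing that the $\Gamma$-orbit closures form a finite $G$-invariant clopen partition with stabilizer a genuine finite-index subgroup. This requires care because $\Gamma$ need not be normal, so I route the argument through $\Gamma_G$ and the permutation action on its finitely many minimal sets. Here one must check that the $\Gamma$-orbit closures of distinct points are disjoint or equal; this holds because they are unions of $\Gamma$-orbits of the finitely many $\Gamma_G$-minimal sets. Alternatively, (iii)$\Rightarrow$(i) could be deduced by an inverse limit argument over the finite factors of $Y$, but the direct route above seems cleaner.
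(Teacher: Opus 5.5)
Your reduction of (v)$\Rightarrow$(i) to the claim that $\overline{\Gamma.x}=X$ for all $\Gamma\in\Eig(Y,G)$ and $x\in X$ is correct; the claim is in fact equivalent to $X\perp G/\Gamma$. The gap is in your proof of that claim. You assert that $\{g.C;\, g\in G\}$ is a partition consisting of the $\Gamma$-orbit closures, but $g.C=\overline{g\Gamma.x}=\overline{\Gamma^g.(g.x)}$ is a $\Gamma^g$-orbit closure, not a $\Gamma$-orbit closure. When $\Gamma$ is not normal, the partition into $\Gamma$-orbit closures is in general not $G$-invariant, so it does not give a $G$-factor. The family $\{g.C\}$ need not be a partition at all.

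Take $G=S_4$ acting on $X=\{1,2,3,4\}$, $\Gamma=\grouphull{(14)}$ and $x=4$. Then $C=\{1,4\}$, and its translates are all two-element subsets, which overlap. Moreover $G_0(C)=\grouphull{(14),(23)}$ has order $4$. It therefore contains no conjugate of a point stabilizer (order $6$) and is not in $\Eig(X,G)$. Condition (v) fails in this example, but your argument asserts the partition property and $G_0(C)\in\Eig(X,G)$ before invoking (v). These can only be known once $C=X$ has been proved, so as written the step is circular.

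The repair is to use the stabilizer of the $\Gamma_G$-minimal set instead of the stabilizer of $C$. Put $A:=\overline{\Gamma_G.x}$ and $\Lambda:=G_0(A)$. Because $\Gamma_G$ is normal, the translates $g.A$ are exactly the $\Gamma_G$-minimal sets and form a finite clopen invariant partition. Hence $\Lambda\in\Eig(X,G)$; this is Lemma \ref{lem:preparationForDisjointnessCharacterization}. Now (v) gives $\Lambda\Gamma=G$, so $\Gamma\Lambda=(\Lambda\Gamma)^{-1}=G$. Therefore $X=G.A=\Gamma\Lambda.A=\Gamma.A\subseteq\overline{\Gamma.x}$. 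In general $\Gamma\not\subseteq\Lambda$, so the mechanism is this product identity, not ``$\Lambda\supseteq\Gamma$ forces $\Lambda=G$''.

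With this fix your proof is valid and differs from the paper's mainly in packaging. The paper proves (v)$\Rightarrow$(iii) using the same set $\overline{\Gamma_G.x_0}$ inside a minimal subset of $X\times G/\Gamma$. It then obtains (iii)$\Rightarrow$(i) by writing $X\times Y$ as an inverse limit of the actions $X\times G/\Gamma$. Your $G$-tile reduction replaces that inverse-limit step with a direct density argument.
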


\begin{remark}\cite[Proposition~7.1]{auslander1988minimal}
\label{rem:disjointnessFactorInheritance}
    Let $(X,G)$ and $(Y,G)$ be minimal actions.
    If $(Z,G)$ is a factor of $(Y,G)$, then $(X\times Z,G)$ is a factor of $(X\times Y,G)$. Since factors of minimal actions are minimal we observe that $X\perp Y$ implies $X\perp Z$.
\end{remark}

For the proof of Theorem \ref{the:disjointnessCharacterization} we first show the following. 

\begin{lemma}
\label{lem:preparationForDisjointnessCharacterization}
    Let $(X,G)$ be a minimal action, $x\in X$ and $\Gamma\in \subfin(G)$ normal. 
    $B:=\overline{\Gamma.x}$ is clopen and satisfies $\Gamma\subseteq G_0(B)\in \Eig(X,G)$. 
\end{lemma}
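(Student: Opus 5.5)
We follow the argument in the proof of Proposition \ref{pro:regularlyRecurrentPointsAndGTiles}, but without assuming regular recurrence; normality of $\Gamma$ together with minimality of $(X,G)$ will suffice. Since $(X,G)$ is minimal, $x$ is almost periodic. By Lemma \ref{lem:normalSubgroupMinimalComponents}, for every $g\in G$ the point $g.x$ is almost periodic for $(X,\Gamma)$, so $\overline{\Gamma.(g.x)}$ is a minimal component of $(X,\Gamma)$. Normality gives $\Gamma g=g\Gamma$, hence
\[\overline{\Gamma g.x}=\overline{g\Gamma.x}=g.\overline{\Gamma.x}=g.B.\]
Thus each translate $g.B$ is a minimal $\Gamma$-component. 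Two such components are either equal or disjoint, so $\{g.B;\, g\in G\}$ is a partition of its union.

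Next we show this partition is finite and covers $X$. Fix a fundamental domain $F$ of $\Gamma$; it is finite since $\Gamma\in\subfin(G)$. Exactly as in Proposition \ref{pro:regularlyRecurrentPointsAndGTiles},
\[\bigcup_{g\in F}g.B=\overline{F\Gamma.x}=\overline{G.x}=X,\]
where the closure of a finite union is the union of the closures. So $X$ is the disjoint union of the finitely many closed sets $g.B$ with $g\in F$ (after discarding repetitions). The complement of $B$ is therefore a finite union of closed sets, hence closed, and $B$ is clopen.

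For the stabilizer, $\Gamma\subseteq G_0(B)$ holds because $h.B=\overline{h\Gamma.x}=\overline{\Gamma.x}=B$ for $h\in\Gamma$. It follows that $G_0(B)$ has finite index. Consider the relation $R:=\bigcup_{g\in G}(g.B)^2$. It is an invariant equivalence relation with finitely many clopen classes, so it is closed, and $X/R$ is a finite minimal action, i.e.\ a finite subodometer. The map $X\to\{g.B;\, g\in G\}$ sending $y$ to the unique translate containing it is a continuous equivariant surjection onto the orbit of $B$ in $\hyper(X)$. By the discussion on finite subodometers, that orbit is conjugate to $G/G_0(B)$ via $B\mapsto G_0(B)$. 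Hence $G_0(B)\in\Eig(X,G)$.

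The only delicate step is the partition claim: it needs the translates to be minimal $\Gamma$-components, which is exactly where normality and Lemma \ref{lem:normalSubgroupMinimalComponents} enter. The remaining steps are routine.
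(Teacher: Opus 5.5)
Your proof is correct and takes essentially the paper's route. Normality together with Lemma~\ref{lem:normalSubgroupMinimalComponents} makes the translates $g.B$ the finitely many minimal components of $(X,\Gamma)$, so they are clopen, and the resulting finite invariant partition gives a finite factor in which $B$ has stabilizer $G_0(B)$. The only difference is that you spell out the finiteness via a fundamental domain, as in Proposition~\ref{pro:regularlyRecurrentPointsAndGTiles}, where the paper simply asserts it; your relation $R$ is the paper's $\rho$, although you then work with the orbit map into $\hyper(X)$ instead.
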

\begin{proof} 
    Clearly, $G_0(B)$ is a group and $\Gamma\subseteq G_0(B)$.
    In particular, $G_0(B)$ is a finite index subgroup. 
    Since $(X,G)$ is minimal it is pointwise almost periodic.
    Thus, by Lemma \ref{lem:normalSubgroupMinimalComponents} and the normality of $\Gamma$ the action $(X,\Gamma)$ decomposes into minimal components. 
    Since $\Gamma$ is of finite index there are only finitely many components and hence all components are clopen. 
    Note that one of these components is given by $B$. 
    Denote $\rho$ for the respective equivalence relation and note that $\rho$ is invariant and closed. 
    Considering the factor map $\pi\colon X\to X/\rho$ and $x\in B$ we observe that $G_0(B)=G_0(\rho[x])\in \Eig(X,G)$.
\end{proof}

\begin{proof}[Proof of Theorem \ref{the:disjointnessCharacterization}:]
    From Remark \ref{rem:disjointnessFactorInheritance} we observe that 
    (i)$\Rightarrow$(ii)$\Rightarrow$(iv) 
    and that 
    (i)$\Rightarrow$(iii)$\Rightarrow$(iv).
    From Proposition \ref{pro:disjointnessNoCommonFactorsFiniteSubodometers} we know that (iv)$\Rightarrow$(v). 

'(v)$\Rightarrow$(iii)':
    For any finite factor $(Z',G)$ of $(Y,G)$ there exists $\Gamma\in \Eig(Y,G)$ such that $(G/\Gamma,G)$ and $(Z',G)$ are conjugated. 
    Let $A$ be a minimal subaction of $X\times G/\Gamma$. 
    The projection $A\to G/\Gamma$ is surjective and hence there exists $x_0\in X$ with $(x_0,\Gamma)\in A$. 
    Denote $B:=\overline{\Gamma_G.x_0}$ and $\Lambda:=G_0(B)$. 
    From Lemma \ref{lem:preparationForDisjointnessCharacterization} we know that $B$ is open and that $\Lambda\in \Eig(X,G)$. 
    For $g\in \Gamma_G$ we have $g\Gamma=\Gamma$ and hence $(g.x_0,\Gamma)=g.(x_0,\Gamma)\in g.A$. 
    Since $A$ is invariant and closed we observe $B\times \{\Gamma\}\subseteq A$. 

    To prove that $(X,G)$ and $(G/\Gamma,G)$ are disjoint it remains to show that $A=X\times G/\Gamma$. 
    Consider $(x,y)\in X\times G/\Gamma$. Since $B$ is open the minimality of $(X,G)$ yields the existence of $g_1\in G$ with $x\in g_1.B$. 
    Furthermore, there exists $g_2\in G$ with $y=g_2\Gamma$. 
    Recall that $\Lambda\in \Eig(X,G)$.
    It follows from (v) and Proposition \ref{pro:disjointnessNoCommonFactorsFiniteSubodometers} that $(G/\Lambda,G)$ and $(G/\Gamma,G)$ are disjoint, i.e.\ that $(G/\Lambda\times G/\Gamma,G)$ is minimal. 
    Thus, there exists $h\in G$ with $(h\Lambda,h\Gamma)=(g_1\Lambda,g_2\Gamma)$.
    From $h\Lambda=g_1\Lambda$ we observe $g_1^{-1}h\in \Lambda=G_0(B)$ and hence $x\in g_1.B=h.B$. 
    Thus, $y=g_2\Gamma=h\Gamma$ and the invariance of $A$ yield
    $(x,y)\in h.(B\times \{\Gamma\})\subseteq A$. 

(iii)$\Rightarrow$(i): 
    Recall that $(Y,G)=\varprojlim_{\Gamma\in S} G/\Gamma$ for some scale $S\subseteq \subfin(G)$. 
    It is straightforward to observe that $(X\times Y,G)$ and $\varprojlim_{\Gamma\in S} (X\times G/\Gamma, G)$ are conjugated. 
    By (iii), $X\perp G/\Gamma$ holds for all $\Gamma\in S$.
    Thus, $(X\times Y,G)$ is minimal as the inverse limit of minimal actions. This shows $X\perp Y$. 
\end{proof}

\subsection{Common factors with subodometers}
\label{subsec:DisjointnessOfSubodometers_CommonFactorsWithSubodometers}

\begin{theorem}
\label{the:characterizationNNCFsubodometers}
    Let $(X,G)$ be a minimal action and $(Y,G)$ be a subodometer. 
    The following statements are equivalent. 
    \begin{itemize}
        \item[(i)]$X$ and $Y$ have no nontrivial common factor.
        \item[(ii)]
        $X$ and $Y$ have no nontrivial common finite factor.
        \item[(iii)]
        $\grouphull{\Lambda\cup\Gamma}=G$, for every $\Lambda\in \Eig(X,G)$ and every $\Gamma\in \Eig(Y,G)$.
    \end{itemize}    
\end{theorem}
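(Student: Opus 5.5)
The plan is to prove (i)$\Rightarrow$(ii)$\Rightarrow$(iii)$\Rightarrow$(i).

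\emph{(i)$\Rightarrow$(ii).} This is immediate.

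\emph{(ii)$\Rightarrow$(iii).} Let $\Lambda\in \Eig(X,G)$ and $\Gamma\in\Eig(Y,G)$, and put $\Delta:=\grouphull{\Lambda\cup\Gamma}$. Since $\Delta$ contains $\Lambda$, it lies in $\subfin(G)$. Moreover $(G/\Delta,G)$ is a factor of $G/\Lambda$ via $\pi^{\Lambda}_{\Delta}$, and of $G/\Gamma$ via $\pi^{\Gamma}_{\Delta}$. Composing with factor maps $X\to G/\Lambda$ and $Y\to G/\Gamma$ shows that $G/\Delta$ is a common finite factor of $X$ and $Y$. By (ii) it is trivial, so $\Delta=G$.

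\emph{(iii)$\Rightarrow$(i).} Suppose $(Z,G)$ is a common factor of $X$ and $Y$. By Proposition~\ref{pro:EIGfactorsAndConjugacyInvariant}(iii), $Z$ is a subodometer, being a factor of $Y$. If $Z$ is nontrivial, it admits a nontrivial finite factor: the invariant clopen equivalence relations form a base of $\mathbb{U}_Z$ (Proposition~\ref{pro:characterizationEquicontinuousStone}), so some such relation separates two distinct points, and the quotient is finite and nontrivial. Call this quotient $G/\Theta$ with $\Theta\neq G$. Then $\Theta$ lies in both $\Eig(X,G)$ and $\Eig(Y,G)$, and taking $\Lambda=\Gamma=\Theta$ gives $\grouphull{\Lambda\cup\Gamma}=\Theta\neq G$, contradicting (iii). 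Hence every common factor is trivial.

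This reduction to finite factors makes the argument short. The only point needing care is that a nontrivial subodometer has a nontrivial finite factor, and I expect that to be the main (mild) obstacle. It follows from Proposition~\ref{pro:characterizationEquicontinuousStone} as above. Alternatively, it follows from the generating scale $\Eig_z(Z,G)$: if $G$ were the only eigenvalue, the inverse limit would be a single point.

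Note that in (iii) the condition is stated for all eigenvalues, not up to conjugation. Since $\Eig$ is conjugation invariant, this already encodes the ``for all $g$'' condition of Proposition~\ref{pro:disjointnessNoCommonFactorsFiniteSubodometers}(ii), so that proposition is not needed.
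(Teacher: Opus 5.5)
Your proof is correct and follows the paper's route. For (ii)$\Rightarrow$(iii) you use the common finite factor $G/\grouphull{\Lambda\cup\Gamma}$, which is exactly what the paper obtains by citing Proposition~\ref{pro:disjointnessNoCommonFactorsFiniteSubodometers}(ii). For (iii)$\Rightarrow$(i) you use that an eigenvalue $\Theta$ of a common factor lies in $\Eig(X,G)\cap\Eig(Y,G)$ and hence equals $\grouphull{\Theta\cup\Theta}=G$. The only cosmetic difference is at the end: the paper concludes triviality of $Z$ from $\Eig(Z,G)=\{G\}$ via the complete conjugacy invariance of $\Eig$ (Proposition~\ref{pro:EIGfactorsAndConjugacyInvariant}), whereas you build a nontrivial finite quotient of $Z$ directly from Proposition~\ref{pro:characterizationEquicontinuousStone}; both are valid.
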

\begin{remark}
    Since finite actions can be represented in the form $G/\Gamma$ for a suitable choice of $\Gamma$ (ii) can be reformulated as $\Eig(X,G)\cap \Eig(Y,G)=\{G\}$. 
\end{remark}
\begin{proof}
'(i)$\Rightarrow$(ii)': 
    Immediate. 

'(ii)$\Rightarrow$(iii)':
    For $\Lambda\in \Eig(X,G)$ and $\Gamma\in \Eig(Y,G)$ any common factor of the actions 
    $(G/\Lambda,G)$ and $(G/\Gamma,G)$ is a common finite factor of $(X,G)$ and $(Y,G)$ and trivial by (ii). 
    We observe $\grouphull{\Lambda\cup \Gamma}=G$ from Proposition \ref{pro:disjointnessNoCommonFactorsFiniteSubodometers}. 

'(iii)$\Rightarrow$(i)':
    Consider a common factor $(Z,G)$ of $(X,G)$ and $(Y,G)$
    and note that $(Z,G)$ is a subodometer as a factor of the subodometer $(Y,G)$.   
    Thus, Proposition \ref{pro:EIGfactorsAndConjugacyInvariant} yields $\Eig(Z,G)\subseteq \Eig(X,G)\cap \Eig(Y,G)$. 
    For $\Gamma\in \Eig(Z,G)$ we observe from (iii) that $\Gamma=\grouphull{\Gamma\cup \Gamma}=G$. 
    This shows $\Eig(Z,G)=\{G\}$. 
    Since $\Eig$ is a complete conjugacy invariant for subodometers (Proposition \ref{pro:EIGfactorsAndConjugacyInvariant}) we observe that $(Z,G)$ and $(G/G,G)$ are conjugated, i.e.\ that $(Z,G)$ is trivial. 
\end{proof}

\subsection{Common factors and disjointness via scales}
\label{subsec:DisjointnessOfSubodometers_CommonFactorsAndDisjointnessViaScales}

\begin{corollary}
\label{cor:disjointnessAndScales}
    If $(X,G)$ is a minimal action and $(Y,G)$ a subodometer generated by a scale $S$. 
    \begin{itemize}
        \item[(i)]
        $X\perp Y$ if and only if $\Lambda\Gamma=G$ holds for all $\Lambda\in \Eig(X,G)$ and $\Gamma\in S$.
        \item[(ii)] 
        $X$ and $Y$ have no nontrivial common factor if and only if 
        $\grouphull{\Lambda\cup\Gamma}=G$ holds for all $\Lambda\in \Eig(X,G)$ and $\Gamma\in S$.
    \end{itemize}
    If also $(X,G)$ is a subodometer generated by a scale $S'$ the following statements hold. 
    \begin{itemize}
        \item[(iii)] $X\perp Y$ if and only if $\Lambda\Gamma=G$ holds for all $\Lambda\in S'$ and $\Gamma\in S$. 
        \item[(iv)]
        $X$ and $Y$ have no nontrivial common factor if and only if 
        $\grouphull{\Lambda^g\cup\Gamma}=G$ holds for all $\Lambda\in S'$, $\Gamma\in S$ and $g\in G$.
    \end{itemize}
\end{corollary}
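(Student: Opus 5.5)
Each of the four items reduces to Theorem \ref{the:disjointnessCharacterization} or Theorem \ref{the:characterizationNNCFsubodometers}. The only extra ingredient is that a generating scale $S$ satisfies $\eigenhull{S}=\Eig(Y,G)$. Concretely, $S\subseteq \Eig(Y,G)$, and every $\Gamma\in\Eig(Y,G)$ contains a conjugate $\Lambda^g$ of some $\Lambda\in S$. The conditions in the theorems are monotone, and the point is to track how they behave under passing to larger subgroups and under conjugation.

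For (i), the direction "$\Rightarrow$" is immediate from Theorem \ref{the:disjointnessCharacterization}(v), since $S\subseteq\Eig(Y,G)$. For "$\Leftarrow$", let $\Lambda\in\Eig(X,G)$ and $\Gamma\in\Eig(Y,G)$, and choose $\Gamma_0\in S$ and $g\in G$ with $\Gamma_0^g\subseteq\Gamma$. Because $\Eig(X,G)$ is conjugation invariant, $\Lambda^{g^{-1}}\in\Eig(X,G)$, so the hypothesis gives $\Lambda^{g^{-1}}\Gamma_0=G$. Conjugating by $g$ yields $\Lambda\Gamma_0^g=g(\Lambda^{g^{-1}}\Gamma_0)g^{-1}=G$, and hence $\Lambda\Gamma\supseteq\Lambda\Gamma_0^g=G$. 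Theorem \ref{the:disjointnessCharacterization} then gives $X\perp Y$.

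Item (ii) follows in the same way from Theorem \ref{the:characterizationNNCFsubodometers}(iii). The relevant facts are $\grouphull{\Lambda\cup\Gamma_0^g}=\grouphull{\Lambda^{g^{-1}}\cup\Gamma_0}^g$ and that $\grouphull{\cdot}$ is monotone.

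For (iii) and (iv), where $(X,G)$ is generated by $S'$, I apply the same reduction a second time on the $X$ side. For (iii), "$\Rightarrow$" holds because $S'\subseteq\Eig(X,G)$, so (i) applies. For "$\Leftarrow$", take $\Lambda\in\Eig(X,G)$ containing $\Lambda_0^h$ with $\Lambda_0\in S'$, and $\Gamma\in S$. The hypothesis $\Lambda_0\Gamma=G$ must be turned into $\Lambda_0^h\Gamma=G$, and this is the main obstacle, because $S$ itself is not conjugation invariant. I would handle it through finite disjointness. By Proposition \ref{pro:disjointnessNoCommonFactorsFiniteSubodometers}(i), $\Lambda_0\Gamma=G$ means $G/\Lambda_0\perp G/\Gamma$. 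The action $G/\Lambda_0^h$ is conjugate to $G/\Lambda_0$, and disjointness is invariant under conjugacy, so $G/\Lambda_0^h\perp G/\Gamma$, which is exactly $\Lambda_0^h\Gamma=G$. Directly, $\Lambda_0\Gamma=G$ also gives $\Gamma\Lambda_0=G$ by taking inverses, and then
\[\Lambda_0^h\Gamma=h\Lambda_0h^{-1}\Gamma=h\Lambda_0\Gamma=G,\]
after writing $h^{-1}\Gamma\subseteq\Lambda_0\Gamma$. Upward closure then gives $\Lambda\Gamma=G$, and (i) finishes the argument.

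For (iv) the same trick fails, because the subgroup generated by the union is not unchanged by conjugating only one factor. This is why the statement quantifies over all $g$. For "$\Leftarrow$", given $\Lambda\supseteq\Lambda_0^h$ with $\Lambda_0\in S'$ and $\Gamma\in S$, the hypothesis with $g=h$ gives $\grouphull{\Lambda\cup\Gamma}\supseteq\grouphull{\Lambda_0^h\cup\Gamma}=G$, and (ii) applies. For "$\Rightarrow$", note that $\Lambda_0^g\in\Eig(X,G)$ because $\Eig(X,G)$ is conjugation invariant, and then (ii) yields the condition.
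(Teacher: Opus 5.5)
Your proposal is correct and takes essentially the paper's approach: you reduce each item to Theorem \ref{the:disjointnessCharacterization} or Theorem \ref{the:characterizationNNCFsubodometers} using $\eigenhull{S}=\Eig(Y,G)$, conjugation invariance of $\Eig(X,G)$ and monotonicity. In (i) you move the conjugate onto $\Lambda$ instead of using the conjugacy $G/\Gamma^g\cong G/\Gamma$ as the paper does, and you usefully write out (iii) and (iv), which the paper leaves as ``similar arguments''. One small slip in your optional direct computation for (iii): the inclusion $h^{-1}\Gamma\subseteq\Lambda_0\Gamma$ gives only one direction, and the equality $\Lambda_0h^{-1}\Gamma=\Lambda_0\Gamma$ comes from writing $h^{-1}=\lambda\gamma$ with $\lambda\in\Lambda_0$, $\gamma\in\Gamma$; your main argument via finite disjointness is unaffected.
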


\begin{remark}
    Recall from Example \ref{exa:noCommonFactorsCondition} that the testing for all $g\in G$ in (iv) is necessary. 
\end{remark}

\begin{proof}
(i): 
    If $X\perp Y$ we observe the second condition from $S\subseteq \Eig(X,G)$ and Theorem \ref{the:disjointnessCharacterization}. 
    For the converse we show Theorem \ref{the:disjointnessCharacterization}(v) and consider $\Lambda\in \Eig(X,G)$ and $\Gamma\in \Eig(Y,G)$.
    Since $S$ is a scale that generates $(Y,G)$ there exist $g\in G$ and $\Gamma'\in S$ with $\Gamma'\subseteq \Gamma^g$. 
    From our assumption we have $G=\Lambda\Gamma'\subseteq \Lambda\Gamma^g$ and Proposition \ref{pro:disjointnessNoCommonFactorsFiniteSubodometers} yields $G/\Lambda\perp G/\Gamma^g$. Since $(G/\Gamma^g,G)$ and $(G/\Gamma,G)$ are conjugated we observe 
    $G/\Lambda \perp G/\Gamma$, i.e.\ $\Lambda\Gamma=G$.  

(ii):
    From Theorem \ref{the:characterizationNNCFsubodometers} we know that the first condition implies the second. 
    For the converse we show Theorem \ref{the:characterizationNNCFsubodometers}(iii) and consider $\Lambda\in \Eig(X,G)$ and $\Gamma\in \Eig(Y,G)$.
    Since $S$ is a scale that generates $(Y,G)$ there exist $g\in G$ and $\Gamma'\in S$ with $\Gamma'\subseteq \Gamma^g$. 
    From the conjugation invariance of $\Eig(X,G)$ we know $\Lambda^g\in \Eig(X,G)$ and hence
    $G=\grouphull{\Lambda^g\cup \Gamma'}\subseteq \grouphull{\Lambda^g\cup \Gamma^g}=\grouphull{\Lambda\cup \Gamma}^g$. 
    This shows $\grouphull{\Lambda\cup \Gamma}=G$. 

The statements of (iii) and (iv) follow from similar arguments. 
\end{proof}

\begin{remark}
    Recall from Proposition \ref{pro:odometerCharacterizationEIG} that odometers allow for a scale consisting of normal subgroups. We thus rediscover from Remark \ref{rem:disjointnessFiniteOdometers} and Corollary \ref{cor:disjointnessAndScales} that a minimal action is disjoint from an odometer $(Y,G)$ if and only if it shares no nontrivial common factor with $(Y,G)$. 
    As already mentioned, this can also be observed from the regularity and the distality of odometers \cite[Theorem~11.21]{auslander1988minimal}.
\end{remark}

\subsection{Disjointness to all subodometers}
\label{subsec:DisjointnessOfSubodometers_DisjointnessToAllSubodometers}
In \cite[Chapter~11]{auslander1988minimal} it was shown that a minimal action (that admits an invariant regular Borel probability measure) is weakly mixing\footnote{
    An action $(X,G)$ is called \emph{weakly mixing} if every invariant nonempty open subset $U\subseteq X^2$ is dense in $X^2$ \cite{auslander1988minimal}. 
}
if and only if it is disjoint from any equicontinuous action. 
We next classify the minimal actions that are disjoint from all subodometers. 

\begin{theorem}
\label{the:characterizationDisjointnessToSubodometers}
    For a minimal action the following statements are equivalent. 
    \begin{itemize}
        \item[(i)] $X\perp Y$ for all subodometers $(Y,G)$. 
        \item[(ii)] $X\perp Y$ for all odometers $(Y,G)$.
        \item[(iii)] $X\perp Y$ for all finite minimal actions $(Y,G)$. 
        \item[(iv)] $X\perp Y$ for all finite odometers $(Y,G)$.
        \item[(v)] $(X,G)$ has no nontrivial finite factors. 
        \item[(vi)] The maximal equicontinuous factor of $(X,G)$ is connected. 
    \end{itemize}
\end{theorem}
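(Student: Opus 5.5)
I will arrange the implications as (i)$\Rightarrow$(ii)$\Rightarrow$(iv), (i)$\Rightarrow$(iii)$\Rightarrow$(iv), then (iv)$\Rightarrow$(v)$\Rightarrow$(i), and finally (v)$\Leftrightarrow$(vi). The first four implications are immediate: odometers are subodometers, finite minimal actions are subodometers, and finite odometers are both odometers and finite minimal actions.

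For (iv)$\Rightarrow$(v), I take a finite factor $Z$ of $X$. I may assume $Z=G/\Lambda$ with $\Lambda\in\Eig(X,G)$. Then $\Lambda_G\in\subfin(G)$ is normal, so $(G/\Lambda_G,G)$ is a finite odometer, and by (iv) $X\perp G/\Lambda_G$. Remark \ref{rem:disjointnessFactorInheritance} then gives $G/\Lambda\perp G/\Lambda_G$. By Proposition \ref{pro:disjointnessNoCommonFactorsFiniteSubodometers}(i) this means $\Lambda\Lambda_G=G$. Since $\Lambda_G\subseteq\Lambda$, we get $\Lambda=G$, so $Z$ is trivial.

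For (v)$\Rightarrow$(i), I use Theorem \ref{the:disjointnessCharacterization}(v). By (v), $\Eig(X,G)=\{G\}$, so for any subodometer $Y$ and any $\Gamma\in\Eig(Y,G)$ we have $G\Gamma=G$. This gives $X\perp Y$.

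It remains to prove (v)$\Leftrightarrow$(vi), and this is where I expect the real work. Write $\pi\colon X\to X_{\operatorname{MEF}}$.

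\emph{Direction (vi)$\Rightarrow$(v).} A finite factor $G/\Lambda$ of $X$ is equicontinuous, so it factors through $X_{\operatorname{MEF}}$. A continuous surjection from a connected space onto a finite discrete space is constant, so $G/\Lambda$ is a single point.

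\emph{Direction (v)$\Rightarrow$(vi).} Suppose $X_{\operatorname{MEF}}$ is not connected. Then it has a nontrivial clopen partition, and I want to turn this into a nontrivial finite factor. Here is the plan:
\begin{itemize}
\item $X_{\operatorname{MEF}}$ is minimal and equicontinuous, so the connected component relation $\rho$ (equivalently, the quasi-component relation, since the space is compact Hausdorff) is a closed invariant equivalence relation, because homeomorphisms permute components.
\item The quotient $X_{\operatorname{MEF}}/\rho$ is a Stone space. This is the standard fact that quotienting a compact Hausdorff space by its components gives a totally disconnected compact Hausdorff space.
\item The quotient is minimal, being a factor of a minimal action, and equicontinuous, being a factor of an equicontinuous minimal action. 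So it is a subodometer.
\item It is nontrivial, since $X_{\operatorname{MEF}}$ is disconnected and hence has more than one component.
\item A nontrivial subodometer has a nontrivial finite factor. By Proposition \ref{pro:characterizationEquicontinuousStone} there is a clopen invariant equivalence relation other than the full relation, and the quotient by it is a finite minimal factor with more than one point.
\end{itemize}
Composing with $\pi$ gives a nontrivial finite factor of $X$, contradicting (v).

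The step to check carefully is the equicontinuity of factors of minimal equicontinuous actions. I would cite \cite{auslander1988minimal} for this, or verify it through uniform almost periodicity, which passes to factors.
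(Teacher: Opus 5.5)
Your proof is correct, but it takes a different route from the paper's in two places.

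\emph{(iv)$\Rightarrow$(v)$\Rightarrow$(i).} You test (iv) against the single finite odometer $G/\Lambda_G$, which forces $\Lambda=\Lambda\Lambda_G=G$. Then Theorem \ref{the:disjointnessCharacterization}(v), with $\Eig(X,G)=\{G\}$, gives disjointness from every subodometer. The paper instead proves (iv)$\Rightarrow$(i) directly. It applies Corollary \ref{cor:disjointnessAndScales} to the scale of all normal finite-index subgroups to get $X\perp$ the universal odometer, passes to all subodometers as factors of it, and proves (i)$\Rightarrow$(v) separately. Your route is shorter and needs neither the universal odometer nor the scale corollary.

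\emph{(v)$\Rightarrow$(vi).} The paper builds a finite factor in one step. From a nontrivial clopen $A\subseteq X_{\operatorname{MEF}}$ it forms $\rho=A^2\cup(X_{\operatorname{MEF}}\setminus A)^2$ and uses equicontinuity to find an invariant entourage inside $\rho$. Then $\bigcap_{g} g.\rho$ is an invariant equivalence relation that is an entourage, so it has finitely many open classes. You instead quotient by the component relation. This needs three extra facts: components equal quasi-components in compact Hausdorff spaces, the quotient is a Stone space, and factors of equicontinuous actions are equicontinuous. Only then do you invoke Proposition \ref{pro:characterizationEquicontinuousStone}. Each step is valid, and your uniform-almost-periodicity check for the equicontinuity of factors works.

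Your route costs more topology but gives a bit more: $X_{\operatorname{MEF}}/\rho$ is in fact the largest subodometer factor of $X$, since any map from $X_{\operatorname{MEF}}$ to a Stone space collapses components. The paper's argument is more elementary and self-contained.
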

\begin{remark}
    The subodometer $(S_3/\grouphull{(12)},S_3)$ has no nontrivial odometer factor. Thus, considering only odometers in (v) does not yield another characterization of the class of minimal actions disjoint to all subodometers. 
\end{remark}
\begin{proof}[Proof of Theorem \ref{the:characterizationDisjointnessToSubodometers}:]
    Clearly, we have 
    (i)$\Rightarrow$(ii)$\Rightarrow$(iv) and 
    (i)$\Rightarrow$(iii)$\Rightarrow$(iv).

(iv)$\Rightarrow$(i): 
    Denote $S$ for the set of all normal $\Gamma\in \subfin(G)$ and note that $S$ is a scale for the universal odometer $(Y,G)$. 
    Since $(Y,G)$ has all subodometers as factors it suffices to show that $X\perp Y$.    
    From (iv) we observe that $X\perp G/\Gamma$ for all $\Gamma\in S$. In particular, we have $G/\Lambda\perp G/\Gamma$, i.e.\ $\Lambda\Gamma=G$ for all $\Lambda\in \Eig(X,G)$. 
    Thus, Corollary \ref{cor:disjointnessAndScales} yields $X\perp Y$. 

(i)$\Rightarrow$(v):
    Any finite factor $(Y,G)$ of $(X,G)$ is a subodometer and hence satisfies $X\perp Y$. 
    In particular, $(Y,G)$ and $(X,G)$ have no nontrivial common factors and hence $(Y,G)$ must be trivial. 

(v)$\Rightarrow$(vi): 
    Assume that the maximal equicontinuous factor $(Y,G)$ of $(X,G)$ is not connected. 
    There exists a nontrivial clopen subset $A\subseteq Y$. 
    Denote $B:=Y\setminus A$ and note that $\rho:=A^2\cup B^2$ is a clopen equivalence relation. 
    In particular, we have $\rho\in \mathbb{U}_X$. 
    Since $(Y,G)$ is equicontinuous there exists an invariant $\delta\in \mathbb{U}_X$ with $\delta\subseteq \rho$. 
    Clearly, $R:=\bigcap_{g\in G} g.\rho$ is an invariant closed equivalence relation and satisfies $\delta\subseteq R$. 
    We thus have $R\in \mathbb{U}_X$. 
    For $y\in Y$ we observe that $B_R(y)=R[y]$ is a neighbourhood of $y$. 
    Since $R$ is an equivalence relation all equivalence classes are thus open. 
    It follows that there only exist finitely many equivalence classes.
    In particular, the quotient $(Y/R,G)$ establishes a nontrivial finite factor of $(Y,G)$. Since $(Y,G)$ is a factor of $(X,G)$ this contradicts (v). 

(vi)$\Rightarrow$(i):
    Let $(Y,G)$ be a subodometer and consider a finite factor $(Z,G)$ of $(X,G)$. 
    $(Z,G)$ is equicontinuous and hence a factor of the maximal equicontinuous factor of $(X,G)$. 
    Since continuous images of connected sets are connected (vi) yields that $(Z,G)$ is connected and hence trivial. 
    In particular, it satisfies $Z\perp Y$. 
    This shows $Z\perp Y$ for any finite factor $(Z,G)$ of $(X,G)$ and Theorem \ref{the:disjointnessCharacterization} yields $X\perp Y$. 
\end{proof}

\begin{example}
    Whenever $(X,G)$ is a minimal action on a connected compact Hausdorff space, then the maximal equicontinuous factor is connected. 
    Prominent examples are given by irrational rotations on the circle, 
    the distal skew product shift \cite[Chapter~7]{auslander1988minimal}, and
    the actions provided in the field of surface dynamics. 
\end{example}

The following example demonstrates that minimal actions on Stone spaces can be disjoint from all subodometers. 

\begin{example}
    The Sturmian subshift is a minimal action of $\mathbb{Z}$ on the Cantor set. It is well known that its maximal equicontinuous factor is a circle rotation and hence connected. 
\end{example}

\bibliographystyle{alpha}
\bibliography{ref}
\end{document}